\setlist[enumerate]{label=\((\roman*)\)}
\crefname{subsection}{Subsection}{Subsections}
\theoremstyle{plain}
\newtheorem{theorem}{Theorem}[section]
\newtheorem{lemma}[theorem]{Lemma}
\newtheorem{proposition}[theorem]{Proposition}
\newtheorem{corollary}[theorem]{Corollary}
\newtheorem{conjecture}[theorem]{Conjecture}
\theoremstyle{definition}
\newtheorem{remark}[theorem]{Remark}
\newtheorem{example}[theorem]{Example}
\numberwithin{equation}{section}
\newcommand{\N}{\mathbb{N}}
\newcommand{\R}{\mathbb{R}}
\newcommand{\C}{\mathbb{C}}
\NewDocumentCommand{\RR}{O{\Dim}}{\R^{#1}}
\DeclareMathOperator{\Supp}{supp}
\NewDocumentCommand{\Int}{m}{\operatorname{int}#1}
\NewDocumentCommand{\GGL}{m}{\operatorname{GL}(#1)}
\renewcommand{\epsilon}{\varepsilon}
\newcommand{\Molf}{\chi}
\newcommand{\Cdot}{\, \cdot \,}                                    % Wide variant of \cdot
\newcommand{\Lapl}{\Delta}                                         % Laplacian
\NewDocumentCommand{\LaplP}{s}%
{\IfBooleanTF{#1}{\Id - \Lapl}{(\Id - \Lapl)}}                     % 1 - Laplacian
\newcommand{\Mnf}{M}                                               % Manifold
\newcommand{\Dim}{n}                                               % Dimension of manifold
\newcommand{\Vecs}{E}                                              % Vectors
\NewDocumentCommand{\Frame}{O{X}}{\mathbf{#1}}                     % Frame of vector fields
\NewDocumentCommand{\Bnds}{O{\Vecs}}{\mathfrak{B}(#1)}             % Collection of absolutely convex, closed, bounded subset of a TVS
\NewDocumentCommand{\Weight}{s}{\IfBooleanTF{#1}{\tau}{\sigma}}    % Weight function
\NewDocumentCommand{\WeightExp}{O{\varphi}}{#1}
\NewDocumentCommand{\WeightYoung}{O{\WeightExp}}{{#1}^{\ast}}
\NewDocumentCommand{\FT}{m}{\widehat{#1}}                          % Fourier transform
\DeclareMathOperator{\FTO}{\mathcal{F}}                            % Fourier transform as operator
\DeclareMathOperator{\Id}{Id}
\DeclareMathOperator{\Trace}{Tr}
\DeclareMathOperator{\Span}{span}
\newcommand{\LieG}{G}                                              % Lie group
\newcommand{\DLieG}{\widehat{G}}                                   % Dual Lie group
\newcommand{\Origin}{e}                                              % Origin, neutral element of the Lie group
\NewDocumentCommand{\Cont}{}{C}                                    % Continuous functions
\NewDocumentCommand{\Smooth}{O{\infty}}{C^{#1}}                    % Smooth functions, optional argument for finite regularity
\NewDocumentCommand{\BSmooth}{O{\infty}}{C_b^{#1}}                 % Bornologically smooth functions, optional argument for finite regularity
\NewDocumentCommand{\SDiff}{O{\Weight} m}{\mathcal{E}^{#1,#2}}     % Ultradifferentiable functions with step
\NewDocumentCommand{\UDiff}{O{\Weight}}{\mathcal{E}^{[#1]}}        % Ultradifferentiable functions
\NewDocumentCommand{\BDiff}{O{\Weight}}{\mathcal{E}^{(#1)}}        % Beurling Ultradifferentiable functions
\NewDocumentCommand{\RDiff}{O{\Weight}}{\mathcal{E}^{\{#1\}}}      % Roumieu Ultradifferentiable functions
\NewDocumentCommand{\BUDiff}{O{\Weight}}{\mathcal{E}_b^{[#1]}}     % Bornologically Ultradifferentiable functions
\NewDocumentCommand{\BBDiff}{O{\Weight}}{\mathcal{E}_b^{(#1)}}     % Bornologically Beurling Ultradifferentiable functions
\NewDocumentCommand{\BRDiff}{O{\Weight}}{\mathcal{E}_b^{\{#1\}}}   % Bornologically Roumieu Ultradifferentiable functions
\NewDocumentCommand{\SmoothC}{}{s}                                 % Fourier coefficients for smooth functions
\NewDocumentCommand{\SDiffC}{O{\Weight} m}{\SmoothC^{#1,#2}}       % Fourier coefficients for ultradifferentiable functions with step
\NewDocumentCommand{\UDiffC}{O{\Weight}}{\SmoothC^{[#1]}}          % Fourier coefficients for ultradifferentiable functions
\NewDocumentCommand{\BDiffC}{O{\Weight}}{\SmoothC^{(#1)}}          % Fourier coefficients for beurling Ultradifferentiable functions
\NewDocumentCommand{\RDiffC}{O{\Weight}}{\SmoothC^{\{#1\}}}        % Fourier coefficients for roumieu Ultradifferentiable functions
\NewDocumentCommand{\BUDiffC}{O{\Weight}}{\SmoothC_b^{[#1]}}       % Fourier coefficients for bornologically Ultradifferentiable functions
\NewDocumentCommand{\BBDiffC}{O{\Weight}}{\SmoothC_b^{(#1)}}       % Fourier coefficients for bornologically Beurling Ultradifferentiable functions
\NewDocumentCommand{\BRDiffC}{O{\Weight}}{\SmoothC_b^{\{#1\}}}     % Fourier coefficients for bornologically Roumieu Ultradifferentiable functions
\NewDocumentCommand{\RAnalV}{}{\Vecs^{\omega}}                     % Smooth vectors, optional argument for finite regularity
\NewDocumentCommand{\SmoothV}{O{\infty}}{\Vecs^{#1}}               % Smooth vectors, optional argument for finite regularity
\NewDocumentCommand{\BSmoothV}{O{\infty}}{\Vecs^{#1}}              % Bornologically smooth vectors, optional argument for finite regularity
\NewDocumentCommand{\UDiffV}{O{\Weight}}{\Vecs^{[#1]}}             % Ultradifferentiable vectors
\NewDocumentCommand{\BDiffV}{O{\Weight}}{\Vecs^{(#1)}}             % Beurling Ultradifferentiable vectors
\NewDocumentCommand{\RDiffV}{O{\Weight}}{\Vecs^{\{#1\}}}           % Roumieu Ultradifferentiable vectors
\NewDocumentCommand{\BUDiffV}{O{\Weight}}{\Vecs^{[#1]}}            % Bornologically Ultradifferentiable vectors
\NewDocumentCommand{\BBDiffV}{O{\Weight}}{\Vecs^{(#1)}}            % Bornologically Beurling Ultradifferentiable vectors
\NewDocumentCommand{\BRDiffV}{O{\Weight}}{\Vecs^{\{#1\}}}          % Bornologically Roumieu Ultradifferentiable vectors
\newcommand{\Rep}{\pi}                                             % Representation
\newcommand{\LRep}{\pi_{L}}                                        % Left regular representation
\newcommand{\Action}{\Pi}                                          % Action associated with a representation
\NewDocumentCommand{\Orbit}{m}{\gamma_{#1}}                        % Orbit of a representation representation
\NewDocumentCommand{\IrredRep}{}{\xi}                              % Irreducible unitary representation
\NewDocumentCommand{\IrredRepC}{}{[\xi]}                           % Equivalence class of irreducible unitary representation
\NewDocumentCommand{\IrredRepD}{}{d_{\IrredRep}}                   % Dimension of irreducible unitary representation
\NewDocumentCommand{\IrredRepE}{O{\IrredRep}}{\lambda_{#1}}        % Eigenvalue of matrix coefficients of irreducible unitary representation
\NewDocumentCommand{\IrredRepS}{O{\IrredRep}}{\mathcal{H}_{#1}}    % Representation space of irreducible unitary representation
\NewDocumentCommand{\MatCoefS}{O{\IrredRep}}{\mathtt{E}_{#1}}      % Space of matrix coefficients of irreducible unitary representation
\NewDocumentCommand{\IrredRepEP}{s O{\IrredRep}}%
{\IfBooleanTF{#1}{1+\IrredRepE[#2]}{(1+\IrredRepE[#2])}}           % Eigenvalue + 1 of matrix coefficients of irreducible unitary representation
\NewDocumentCommand{\Eval}{m m}{\left<#1,#2\right>}                % Evaluating elements in the dual
\DeclareMathOperator{\csn}{csn}
\DeclarePairedDelimiter{\abs}{\lvert}{\rvert}
\DeclarePairedDelimiter{\norm}{\lVert}{\rVert}
\NewDocumentCommand{\HSnorm}{m}{\norm{#1}_{\text{HS}}}
\NewDocumentCommand{\pHSnorm}{O{p}}{#1_{\text{HS}}}
\NewDocumentCommand{\psupnorm}{O{p}}{#1_{\LieG}}
\title[Strong factorization of ultradifferentiable vectors]{Strong factorization of ultradifferentiable vectors associated with compact Lie group representations}
\author[A. Debrouwere]{Andreas Debrouwere}
\address{A. Debrouwere, Department of Mathematics and Data Science \\ Vrije Universiteit Brussel, Belgium\\ Pleinlaan 2 \\ 1050 Brussels \\ Belgium}
\email{Andreas.Debrouwere@vub.be}
\author[M. Huttener]{Michiel Huttener}
\address{M. Huttener, Department of Mathematics: Analysis, Logic and Discrete Mathematics\\ Ghent University\\ Krijgslaan 281\\ 9000 Ghent\\ Belgium}
\email{Michiel.Huttener@UGent.be}
\author[J. Vindas]{Jasson Vindas}
\address{J. Vindas, Department of Mathematics: Analysis, Logic and Discrete Mathematics\\ Ghent University\\ Krijgslaan 281\\ 9000 Ghent\\ Belgium}
\email{Jasson.Vindas@UGent.be}
\thanks {M.~Huttener and J.~Vindas were supported by the Research Foundation-Flanders through the FWO grant number G067621N. J.~Vindas also acknowledges support from the Ghent University grant number bof/baf/4y/2024/01/155.}
\subjclass[2020]{\emph{Primary.} 22E45;  46E25. \emph{Secondary.}  42A85; 43A65; 46A17; 46E40}
\keywords{Lie group representations; smooth vectors; real analytic vectors; ultradifferentiable vectors; strong factorization; Dixmier-Malliavin type factorization theorems}
\begin{document}

\begin{abstract}

    We show a strong factorization theorem of Dixmier-Malliavin type for ultradifferentiable vectors associated with compact Lie group representations on sequentially complete locally convex Hausdorff spaces.
    In particular, this solves a conjecture by  Gimperlein et al. [J. Funct. Anal. 262 (2012), 667--681] for analytic vectors in the case of compact Lie groups.

\end{abstract}

\maketitle

\section{Introduction}

Let \(\mathcal{M}\) be a (left) module over a nonunital algebra \(\mathcal{A}\).
We say that it has the \emph{strong (weak) factorization property} if \(\mathcal{M} = \mathcal{A} \cdot \mathcal{M}\) (if \(\mathcal{M} = \Span(\mathcal{A} \cdot \mathcal{M})\)).
If \(\mathcal{M}\) is also a bornological space, then it is said to satisfy the \emph{bounded strong factorization property} if for any bounded subset \(\mathcal{B} \subset \mathcal{M}\), one can find \(a \in \mathcal{A}\) and a bounded subset \(\mathcal{B}' \subset \mathcal{M}\) such that \(\mathcal{B} = a \cdot \mathcal{B}'\).

Establishing factorization properties for convolution modules over function algebras is an important topic in harmonic analysis with a long tradition that goes back to works by Salem \cite{Salem39}, Rudin \cite{Rudin57}, and Cohen \cite{Cohen59}. 
In the case of smooth functions, the subject was greatly stimulated by Ehrenpreis' question \cite{ehrenpreis} on the factorization properties of the convolution algebra $\mathcal{D}(\R^d)$ of compactly supported smooth functions. 
We mention that $\mathcal{D}(\R^d)$ has the weak but not the strong factorization property when $d \geq 2$ \cite{rst,D-M}, while it does have the strong factorization property when $d = 1$ \cite{yul}.

In \cite{D-M}, Dixmier and Malliavin developed a general framework for studying factorization properties of smooth vectors associated with Lie group representations.
Let \(\Rep\) be a (strongly) continuous representation of a Lie group \(\LieG\) on a Fréchet space \(\Vecs\).
The space of smooth vectors \(\SmoothV\) of \(\Rep\) consists of those \(v \in \Vecs\) whose orbit \(x \mapsto \Rep(x)v\) is smooth.
The representation \(\Rep\) induces an action \(\Action\) of the convolution algebra of compactly supported smooth functions \(\mathcal{D}(\LieG)\) on \(\Vecs\) via
\begin{equation}
    \label{equ:action}
    \Action(\chi)v 
    = \int _{\LieG} \chi(x) \, \Rep(x) v \dl{x}
    , \qquad \chi \in \mathcal{D}(\LieG), \  v \in \Vecs
    ,
\end{equation}
which restricts to an action on \(\SmoothV\).
In such a way, \(\SmoothV\) becomes a module over \((\mathcal{D}(\LieG), \ast)\).
The Dixmier-Malliavin factorization theorem states that \(\SmoothV\) possesses the weak factorization property as a module over \((\mathcal{D}(\LieG), \ast)\).
In addition, if \(\LieG\) is a compact Lie group, Dixmier and Malliavin even established the bounded strong factorization property for \(\SmoothV\) as a module over \((\Smooth(\LieG), \ast)\).

More recently,   Gimperlein, Krötz, and Lienau \cite{G-K-L} generalized the Dixmier-Malliavin factorization theorem to analytic vectors \(\RAnalV\) associated with \(F\)-representations of Lie groups on Fréchet spaces.
Analytic vectors are essential tools in various aspects of Lie group representation theory, see for instance \cite{Harish-Chandra, G-K-S, Goodman1969, Nelson59}.
The space of analytic vectors has a natural \(\mathcal{A}(\LieG)\)-module structure, where \(\mathcal{A}(\LieG)\) is the space of real analytic functions with appropriate superexponential decay at infinity, such that \eqref{equ:action} remains well-defined for all \(\chi \in \mathcal{A}(\LieG)\) and \(v\in\RAnalV\).
They showed that \(\RAnalV\) always has the weak factorization property.
Furthermore, they conjectured:
\begin{conjecture}[{\cite[Conjecture 6.4]{G-K-L}}]
    \label{GKL conjecture}
    The space of analytic vectors associated with any \(F\)-representation \(\Rep\) of a Lie group \(\LieG\) on a Fréchet space \(\Vecs\) has the strong factorization property over \(\mathcal{A}(\LieG)\), that is, \(\RAnalV = \Action(\mathcal{A}(\LieG))\RAnalV \).
\end{conjecture}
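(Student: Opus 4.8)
The plan is to reduce the strong factorization property to the construction of a single \emph{analytic mollifier}: a function $k \in \mathcal{A}(\LieG)$ such that the operator $\Action(k)$ maps $\RAnalV$ \emph{onto} $\RAnalV$, together with a right inverse that preserves $\RAnalV$. Once such a $k$ is available, every $v \in \RAnalV$ factors as $v = \Action(k) w$ with $w \in \RAnalV$, which is precisely the assertion $\RAnalV = \Action(\mathcal{A}(\LieG))\RAnalV$. To locate the right candidate for $k$, I would fix a basis $X_1, \dots, X_{\Dim}$ of $\LieA$ and work with the positive elliptic operator $\LaplP*$ obtained by letting $\Id - \sum_i X_i^2$ act through the derived representation (the generators $X_i$ are skew-adjoint, so $\Lapl = \sum_i X_i^2 \leq 0$ and $\LaplP* \geq 1$).

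The second step is to match the \emph{order} of smoothing to analyticity via Nelson's criterion: $v \in \RAnalV$ exactly when $e^{t \sqrt{\LaplP*}} v$ converges in $\Vecs$ for some $t > 0$, so that $\RAnalV$ is the union over $t > 0$ of the domains of the Poisson-type semigroup $e^{-t\sqrt{\LaplP*}}$. The natural mollifier is then the convolution kernel $k_t$ of $e^{-t\sqrt{\LaplP*}}$. On the one hand $\Action(k_t) = e^{-t\sqrt{\LaplP*}}$ smooths to exactly analytic order; on the other, its formal inverse $e^{t\sqrt{\LaplP*}}$ sends a vector of analytic radius $s$ (meaning $e^{s\sqrt{\LaplP*}} v \in \Vecs$) to one of radius $s - t$, since $e^{(s-t)\sqrt{\LaplP*}}(e^{t\sqrt{\LaplP*}} v) = e^{s\sqrt{\LaplP*}} v \in \Vecs$. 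Choosing $t$ below the radius of a given $v$ thus yields $w = e^{t\sqrt{\LaplP*}} v \in \RAnalV$ with $\Action(k_t) w = v$.

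For a \emph{compact} $\LieG$ this already closes the argument, and in particular resolves \Cref{GKL conjecture} in that case. Here the superexponential-decay condition built into $\mathcal{A}(\LieG)$ is vacuous, so $\mathcal{A}(\LieG)$ is the full space of analytic functions on $\LieG$; using Peter--Weyl theory and the classical fact that a function is analytic iff its Fourier coefficients decay like $e^{-c\sqrt{\IrredRepE}}$, one checks that the kernel $k_t$, whose Fourier coefficient on the isotypic block $\IrredRepS$ equals $e^{-t\sqrt{\IrredRepEP*}}\,\Id_{\IrredRepS}$ for $\IrredRep \in \DLieG$, is analytic and hence lies in $\mathcal{A}(\LieG)$. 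The factorization $v = \Action(k_t) w$ with $w$ of strictly smaller radius then holds verbatim, and the same spectral bookkeeping extends it to the ultradifferentiable weights $\Weight$ treated in this paper.

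The main obstacle is the passage to noncompact $\LieG$, where the superexponential-decay requirement is genuine: on $\LieG = \R$ the Poisson kernel $k_t$ has symbol $e^{-t\sqrt{1 + \abs{\xi}^2}}$, whose nearest singularities sit at finite imaginary height, so $k_t$ decays only exponentially and escapes $\mathcal{A}(\LieG)$, while the truly superexponentially decaying kernels in $\mathcal{A}(\LieG)$ — heat kernels and their bounded-time superpositions — smooth at Gaussian order and have inverses that blow up on $\RAnalV$. To attack this I would not invert one semigroup but instead upgrade the weak factorization of Gimperlein--Krötz--Lienau to a strong one after a structure-theoretic reduction (splitting off a maximal compact part via an Iwasawa- or Levi-type decomposition, so that the compact directions are handled by the construction above and only the exponential directions remain). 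In the remaining directions one writes a given $v$, using weak factorization and an analytic approximate identity, as a series $v = \sum_n \Action(k \ast \rho_n) w_n$ with a fixed $k \in \mathcal{A}(\LieG)$, sets $w := \sum_n \Action(\rho_n) w_n$, and concludes $v = \Action(k) w$ by continuity of $\Action(k)$. The crux, and the step I expect to carry the entire weight of the noncompact case, is the quantitative analytic estimate forcing $\sum_n \Action(\rho_n) w_n$ to converge in the topology of $\RAnalV$ — that is, reconciling the superexponential decay demanded of $k$ with the analytic-order smoothing needed to keep $w$ inside $\RAnalV$ uniformly along the noncompact directions of $\LieG$.
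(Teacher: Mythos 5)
Your compact-case argument is essentially the paper's own proof of this conjecture (which the paper, too, establishes only for compact \(\LieG\)): your Poisson kernel \(k_t\), with Peter--Weyl coefficients \(e^{-t\sqrt{1+\IrredRepE}}\,\Id_{\IrredRepS}\), is exactly the factor constructed in \cref{thm:factorization-functions-roumieu} (division of Fourier coefficients by \(C_{\IrredRep}^{-1} = e^{-\frac{1}{h'}\Weight(\sqrt{\IrredRepE})}\) with \(\Weight = \Weight_1\)), and your radius bookkeeping corresponds to that division step together with the concluding verification in \cref{sec:main-proof} that the divided Fourier series is again the orbit of the vector \(\tilde v = f_v(\Origin)\). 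The only caveat worth flagging is that what you invoke as Nelson's criterion --- equivalence of analyticity of a Fréchet-valued orbit with \(e^{-c\sqrt{\IrredRepE}}\) decay of its group Fourier coefficients --- is not an off-the-shelf fact at this generality but is precisely the technical core the paper must prove (\cref{thm:characterization-coefficients-ultradifferentiable}, via the theorem of iterates and Weyl's law); granting it, your factorization closes just as the paper's does, while your noncompact program remains, as you yourself note, open.
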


So far, \cref{GKL conjecture} is only known to hold true in the Euclidean case \(\LieG = (\RR,+)\).
In fact, it was shown in \cite{DPV21} to be true for a fairly general class of representations, including \(F\)-representations and, more generally, proto-Banach representations \cite{Glo}.

The main aim of this article is to establish \cref{GKL conjecture} when \(\LieG\) is a compact Lie group; moreover, our results shall yield:

\begin{theorem}\label{th analytic} 
    Let \(\Rep\) be a  representation of a compact Lie group \(\LieG\) on a Fréchet space \(\Vecs\).
    Then, \(\RAnalV\) has the bounded strong factorization property with respect to the convolution algebra of real analytic functions on \(\LieG\).
\end{theorem}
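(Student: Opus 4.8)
The plan is to pass to the concrete Peter--Weyl model of \(\RAnalV\) and then to invert a single central analytic convolutor on each bounded set. Write \(\Lapl\) for the bi-invariant Laplacian, let \(P_{\IrredRep}\) be the isotypic projection attached to \(\IrredRep\in\DLieG\), and let \(\IrredRepE\ge 0\) be the corresponding Casimir eigenvalue, so that \(-\Lapl=\sum_{\IrredRep}\IrredRepE\,P_{\IrredRep}\) on \(\Vecs\). For each \(t>0\) introduce the weighted space \(\RAnalV_t\) of those \(v\in\Vecs\) for which the series \(\sum_{\IrredRep}e^{t\sqrt{\IrredRepE}}P_{\IrredRep}v\) converges in \(\Vecs\), equipped with the seminorms \(v\mapsto p\bigl(\sum_{\IrredRep}e^{t\sqrt{\IrredRepE}}P_{\IrredRep}v\bigr)\) for \(p\) a continuous seminorm on \(\Vecs\). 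The classical description of analyticity on a compact manifold through the exponential decay \(e^{-t\sqrt{\IrredRepE}}\) of eigenfunction expansions then gives \(\RAnalV=\bigcup_{t>0}\RAnalV_t=\varinjlim_{t\to 0^+}\RAnalV_t\) as locally convex spaces, and realizes \(\mathcal{A}(\LieG)\) as the algebra of functions whose Fourier coefficients decay like \(e^{-t\sqrt{\IrredRepE}}\) for some \(t>0\).

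The decisive reduction is that a bounded set carries a \emph{uniform} rate of analyticity. First I would show that the inductive limit \(\varinjlim_{t\to 0^+}\RAnalV_t\) is regular, i.e. that every bounded \(\mathcal{B}\subset\RAnalV\) is already a bounded subset of a single step \(\RAnalV_{t_0}\) with \(t_0>0\). I expect this to be the main obstacle: one must transfer boundedness from the representation-theoretic space of analytic vectors to the concrete weighted model, which amounts to proving that the family \((\RAnalV_t)_t\) is boundedly retractive. Since \(\Vecs\) is Fréchet, I would derive this from a Grothendieck-type factorization together with sequential completeness of the steps, and it is precisely here that compactness of \(\LieG\) enters essentially, through the discreteness of \(\DLieG\) and the absence of any decay-at-infinity condition.

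Granting a uniform rate \(t_0>0\) for \(\mathcal{B}\), fix \(0<\epsilon<t_0\) and define the central kernel
\begin{equation*}
    a \;=\; \sum_{\IrredRep\in\DLieG}\IrredRepD\,e^{-\epsilon\sqrt{\IrredRepE}}\,\overline{\chi_{\IrredRep}},
\end{equation*}
where \(\chi_{\IrredRep}\) is the character of \(\IrredRep\). Because \(\abs{\chi_{\IrredRep}(x)}\le\IrredRepD\), while by the Weyl dimension formula both \(\IrredRepD\) and the counting function of \(\{\IrredRep:\IrredRepE\le R\}\) grow only polynomially in \(R\), the factor \(e^{-\epsilon\sqrt{\IrredRepE}}\) forces the series to converge and to extend holomorphically to a complexification of \(\LieG\); hence \(a\in\mathcal{A}(\LieG)\). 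By construction the operator \(\Action(a)=\sum_{\IrredRep}e^{-\epsilon\sqrt{\IrredRepE}}P_{\IrredRep}\) acts as the scalar \(e^{-\epsilon\sqrt{\IrredRepE}}\) on the \(\IrredRep\)-component, that is \(\Action(a)=e^{-\epsilon\sqrt{-\Lapl}}\) in the Peter--Weyl functional calculus.

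Finally I would invert. The map \(\Action(a)\) is injective on \(\RAnalV\) with formal inverse \(\sum_{\IrredRep}e^{\epsilon\sqrt{\IrredRepE}}P_{\IrredRep}\), and for \(v\in\RAnalV_{t_0}\) the identity \(e^{\epsilon\sqrt{\IrredRepE}}e^{-t_0\sqrt{\IrredRepE}}=e^{-(t_0-\epsilon)\sqrt{\IrredRepE}}\) shows that this series converges in \(\Vecs\), defining \(v':=\Action(a)^{-1}v\in\RAnalV_{t_0-\epsilon}\subset\RAnalV\) with \(\Action(a)v'=v\); the same estimate shows \(v\mapsto v'\) sends the bounded set \(\mathcal{B}\subset\RAnalV_{t_0}\) into a bounded subset \(\mathcal{B}'\) of \(\RAnalV_{t_0-\epsilon}\), hence of \(\RAnalV\). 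Thus \(\mathcal{B}=\Action(a)\mathcal{B}'\) with \(a\in\mathcal{A}(\LieG)\) and \(\mathcal{B}'\subset\RAnalV\) bounded, which is exactly the bounded strong factorization property. This direct route is the analytic specialization of the general ultradifferentiable machinery, recovered by taking the weight associated with the rate \(\sqrt{\IrredRepE}\).
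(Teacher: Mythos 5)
Your strategy is, in substance, the paper's own proof of \cref{thm:factorization-vectors} specialized to the Gevrey weight \(\Weight_1\): the passage to the Peter--Weyl model is \cref{thm:characterization-coefficients-ultradifferentiable} (via \cref{thm:ultradifferentiability-laplacian}), and your central kernel \(a=\sum_{\IrredRep}\IrredRepD e^{-\epsilon\sqrt{\IrredRepE}}\,\overline{\chi_{\IrredRep}}\) is exactly the factor produced in \cref{thm:factorization-functions-roumieu}, namely \(\FTO^{-1}\bigl((C_{\IrredRep}^{-1}\Id_{\IrredRepS})_{\IrredRep}\bigr)\) with \(C_{\IrredRep}=e^{\epsilon\sqrt{\IrredRepE}}\), since \(\Trace[\IrredRep^{\ast}(x)]=\overline{\chi_{\IrredRep}(x)}\); your final inversion and boundedness bookkeeping coincide with the computation in \cref{sec:main-proof}. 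So this is not a different route, and the question is whether the two steps you leave as claims actually close.

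The genuine gap is the step you yourself flag as the main obstacle: that a bounded set \(\mathcal{B}\subset\RAnalV\) has a uniform rate \(t_0\), i.e.\ regularity of \(\varinjlim_{t\to 0^+}\RAnalV_t\). The tool you invoke, a Grothendieck-type factorization, does not apply as stated: Grothendieck's factorization theorem requires a Fréchet (or Baire) space mapping into \(\bigcup_t\RAnalV_t\), and a bounded set is not such a space. To manufacture one you would take the closed absolutely convex hull of \(\mathcal{B}\) \emph{inside} \(\RAnalV\) and need it to be a Banach disc; but that requires (sequential) completeness, hence in effect regularity, of the very inductive limit under consideration --- countable inductive limits of Fréchet spaces are in general neither regular nor complete, so the argument is circular. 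Nor does the weighted structure rescue it directly: for Fréchet \(\Vecs\) the steps \(\RAnalV_t\) are Fréchet rather than Banach, so Bierstedt-type \((DFS)\) regularity is not available off the shelf. The paper closes exactly this point by other means: the bornology on \(\BRDiffV[\Weight_1]\) is \emph{defined} through Banach discs \(B\in\Bnds\) of \(\Vecs\), so a bounded set comes with a Banach-space-valued model for free, and the uniform rate is then extracted by a weight-function interpolation argument (\cref{thm:smaller-weight-function,thm:roumieu-inductive}) resting on Komatsu's projective description of the Roumieu class; the scalar regularity result of Bierstedt enters only through \cref{thm:inductive-description-roumieu}. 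Relatedly, your opening identification \(\RAnalV=\varinjlim_{t}\RAnalV_t\), including the matching of bounded sets, is not merely ``classical'': for Fréchet-space-valued orbits it is the content of \cref{sec:laplacian,sec:fourier-coefficients} together with \cref{rem:analytic-vectors-coincide} (via Bonet--Doma\'{n}ski). Your plan is the right one, but both load-bearing reductions need proofs of the kind the paper supplies.
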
 

In fact, we will go beyond both Fréchet representations and real analyticity, and work with arbitrary representations on sequentially complete locally convex Hausdorff spaces and ultradifferentiable vectors.
Our considerations shall cover the case of analytic vectors (see \cref{rem:analytic-vectors-coincide}), but also the important instance of Gevrey vectors that were introduced and thoroughly investigated by Goodman and Wallach in \cite{Goodman1,Goodman2,GW}.
As a corollary of this general framework, we shall also recover the original Dixmier-Malliavin factorization theorem for compact Lie groups, providing a new approach to its proof.

The plan of this article is as follows.
In \cref{sec:ultradifferentiability}, we introduce vector-valued ultradifferentiable functions on real analytic manifolds via weight functions (in the Braun-Meise-Taylor setting \cite{BMT}), which will be subsequently used to define (bornologically) ultradifferentiable vectors associated with compact Lie group representations. 
These classes have a natural module structure over some convolution algebra of ultradifferentiable functions on the group.
Our main results and some of their important corollaries are then stated in \cref{sec:main-result}.
In preparation for the proofs, we discuss in \cref{sec:laplacian} a convenient basis of continuous seminorms for spaces of ultradifferentiable functions described in terms of the Laplace-Beltrami operator on the group.
\Cref{sec:fourier-coefficients} contains the technical crux of our solution to the considered factorization problems.
Using the Peter-Weyl Theorem, we derive a Fourier characterization of ultradifferentiable vector-valued functions, which combined with the identity \(\FT{f \ast g} = \FT f \circ \FT g\) then reduces the factorization of ultradifferentiable vectors to a division problem on group Fourier coefficients.

The proof of our main result is then given in \cref{sec:main-proof}.
Finally, in \cref{sec: factorization non-quasianalytic vectors}, we obtain an improvement of the factorization theorem for the case of non-quasianalytic vectors.

\section{Spaces of vector-valued ultradifferentiable functions}
\label{sec:ultradifferentiability}
In this section, we discuss vector-valued ultradifferentiable functions and recall their fundamental properties.
We first consider these classes on open subsets of \(\RR\).
They will then be generalized to manifolds, in particular to our setting, compact Lie groups.

\subsection{Weight functions}
By a \emph{weight function}, \cite{BMT,Rainer14} we mean a continuous increasing function \(\Weight \colon \rinterval{0}{\infty} \rightarrow \rinterval{0}{\infty}\) satisfying \(\Weight_{|\interval{0}{1}} \equiv 0\) and the following properties: 
\begin{itemize}
    \item[\((\alpha)\)] \(\Weight(2t) = O(\Weight(t))\).
    \item[\((\beta)\)] \(\Weight(t) = O(t)\).
    \item[\((\gamma)\)] \(\log t = o(\Weight(t))\).
    \item[\((\delta)\)]\(\WeightExp \colon \rinterval{0}{\infty} \rightarrow \rinterval{0}{\infty}\), \(\WeightExp(t) = \Weight(e^{t})\), is convex.
\end{itemize}
Here we use Landau's big $O$- and little $o$-symbols: 
Given two functions  \(w_1,w_2 \colon \rinterval{0}{\infty} \rightarrow \rinterval{0}{\infty}\) we write $w_1(t) = O(w_2(t))$  to indicate that there are $c, t_c > 0$ such that $w_1(t) \leq cw_2(t)$ for all $t \geq t_c$, while   $w_1(t) = o(w_2(t))$  means that the previous inequality holds for every $c >0$ and some $t_c>0$.
We will sometimes strengthen \((\beta)\) to 
\begin{itemize}
    \item[ \((\beta_0)\)] \(\Weight(t) = o(t)\).
\end{itemize}
Note that the non-quasianalyticity condition \(\displaystyle \int _{0}^{\infty}t^{-2}\Weight(t)\dl{t} < \infty\) from \cite{BMT} implies \((\beta_0)\).

\begin{example}\label{exa:gevrey-weight}
    The \emph{Gevrey weight of order \(s\)} is defined as \(\Weight_s(t) = \max \{0, t^s - 1 \}\).
    It is a weight function for \(0 < s \leq 1\) and additionally satisfies \((\beta_0)\) if \(s < 1\).
\end{example}

Throughout the rest of this article we fix a weight function \(\Weight\) and write \(\WeightExp(t) = \Weight(e^t)\) (cf.\ condition \((\delta)\) above).
We define the \emph{Young conjugate} \(\WeightYoung \colon \rinterval{0}{\infty} \rightarrow \rinterval{0}{\infty}\) of \(\WeightExp\) as \(\WeightYoung(t) = \sup_{u \geq 0} \{tu- \WeightExp(u)\}\).
The function \(\WeightYoung\) is increasing, convex, and satisfies \(\WeightYoung(0) = 0\), \((\WeightYoung)^* = \WeightExp\), and \(\WeightYoung(t)/t \nearrow \infty\) on \(\rinterval{0}{\infty}\).

\begin{example}\label{exa:gevrey-conjugate}
    We set \(\WeightExp_s(t) = \Weight_s(e^t)\) for the Gevrey weights \(\Weight_s\).
    Then,
    \[
        \exp(\WeightYoung_s(t)) 
        = e \left(\frac{1}{se}\right)^{\frac{t}{s}} t^{\frac{t}{s}}
        .
    \]
    Consequently, we have, for all \(h>0\),
    \[
        \exp \left (\frac{1}{h}\WeightExp_s^*(ht) \right) 
        = e^{\frac{1}{h}}\left(\frac{h}{se}\right)^{\frac{t}{s}} t^{\frac{t}{s}}
        .
    \]
\end{example}

We will need the following property of the Young conjugate \(\WeightYoung\) (cf.\ \cite[Lemma 5.7]{Rainer14}): 
For every \(h>0\) there are \(h'<h\) and \(C >0\) such that 
\begin{equation}
    \label{equ:estimate-weight-young}
    \frac{1}{h} \Weight(t)
    \leq \sup_{k \in \N} k \log(t) - \frac{1}{h'} \WeightYoung\left(kh'\right) + C
    , \qquad t > 0
    .
\end{equation}

\subsection{Ultradifferentiable classes on open subsets of \texorpdfstring{\(\RR\)}{Rⁿ}}
\label{sec:ultradifferentiability-euclidean}

Let \(\Theta \subseteq \RR\) be open and let \(\Vecs\) be a (complex) lcHs (= locally convex Hausdorff space).
We denote by \(\csn(\Vecs)\) the family of all continuous seminorms on \(\Vecs\).
Given \(p \in \csn(\Vecs)\), we set \(V_p = \{ v \in \Vecs \mid p(v) \leq 1 \}\) and write \(V^\circ_p\) for its polar set in \(\Vecs'\).
The bipolar theorem yields
\begin{equation}
    \label{equ:bipolar-theorem}
    p(v) 
    = \sup_{v' \in V^\circ_p} \abs{\Eval{v'}{v}}
    , \qquad v \in \Vecs
    .
\end{equation}
For  \(K \subseteq \Theta\) compact, $h >0$, and \(p \in \csn(\Vecs)\), we write
\[
    p_{K,\Weight,h}(f) 
    = \sup_{x \in K} \sup_{\alpha \in \N^\Dim} p(f^{(\alpha)}(x))\exp \left(-\frac{1}{h}\WeightYoung(h\abs{\alpha})\right), \qquad f \in C^\infty(\Theta).
\]
We set \(p_{K,\Weight,h} = \norm{\Cdot}_{K,\Weight,h}\) for \(\Vecs = \C\) and \(p = \abs{\Cdot}\). We define $ \BDiff(\Theta;\Vecs)$ as the space consisting of all \(f \in \Smooth(\Theta;\Vecs)\) such that  $p_{K,\Weight,h}(f) <\infty$ for all \(K \subseteq \Theta\) compact, $h >0$, and \(p \in \csn(\Vecs)\), and endow it with the Hausdorff locally convex topology generated by the system of seminorms \(\{ p_{K,\Weight,h} \mid K \subseteq \Theta \text{ compact}, h>0, p \in \csn(\Vecs) \}\).
Following an idea of Komatsu \cite{Komatsu3}, we define
$$
\RDiff(\Theta;\Vecs) = \varprojlim_{\Weight* = o(\Weight)} \BDiff[\Weight*](\Theta;\Vecs)
$$
as lcHs. 
See Lemma \ref{thm:inductive-description-roumieu} below for  a more standard inductive description of the space \(\RDiff(\Theta;\Vecs)\) in case \(\Vecs\) is a normed space.  
We call $\BDiff(\Theta;\Vecs)$ and $\RDiff(\Theta;\Vecs)$ the spaces of Beurling and Roumieu \(\Vecs\)-valued ultradifferentiable functions of class \(\Weight\), respectively. The notation \(\UDiff(\Theta;\Vecs)\) stands for either case and we simply write \(\UDiff(\Theta)\) instead of \(\UDiff(\Theta;\C)\); similar conventions will be employed for other spaces as well.

\begin{lemma}
    \label{thm:inductive-description-roumieu} 
    Let \((\Vecs,p)\) be a normed space.
    Then, \(f \in \Smooth(\Theta;\Vecs)\) belongs to \(\RDiff(\Theta;\Vecs)\) if and only if for every compact subset \(K \subseteq \Theta \) there is \(h >0\) such that \(p_{K,\Weight,h}(f)< \infty\).
    Moreover, \(B \subseteq \RDiff(\Theta;\Vecs)\) is bounded if and only if for every compact subset \( K \subseteq \Theta \) there is \(h >0\) such that \(\sup_{f \in B} p_{K,\Weight,h}(f)< \infty\).
\end{lemma}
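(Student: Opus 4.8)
The plan is to reduce the whole statement to the scalar case \(\Vecs = \C\) via the bipolar theorem, and then to compare the Roumieu seminorms with those of the Beurling classes \(\BDiff[\Weight*]\), \(\Weight* = o(\Weight)\), through their Young conjugates. Since \(p\) generates the topology of \(\Vecs\), every \(q \in \csn(\Vecs)\) obeys \(q \le C_q\,p\), so \(q_{K,\Weight,h} \le C_q\,p_{K,\Weight,h}\) and it suffices to work with the single norm \(p\). Each \(v' \in \Vecs'\) commutes with differentiation, whence \((v'\circ f)^{(\alpha)} = \Eval{v'}{f^{(\alpha)}}\); combined with \eqref{equ:bipolar-theorem} this gives
\[
  p_{K,\Weight,h}(f) = \sup_{v' \in V^\circ_p} \norm{v'\circ f}_{K,\Weight,h}.
\]
Therefore \(f \in \BDiff[\Weight*](\Theta;\Vecs)\) precisely when the scalar family \(\{v'\circ f : v' \in V^\circ_p\}\) is bounded in \(\BDiff[\Weight*](\Theta)\), and \(B \subseteq \RDiff(\Theta;\Vecs)\) is bounded precisely when \(B' = \{v'\circ f : f \in B,\ v' \in V^\circ_p\}\) is bounded in every \(\BDiff[\Weight*](\Theta)\). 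Consequently both assertions follow once the boundedness assertion is established for \(\Vecs = \C\) (applied to \(B'\), respectively to \(\{v'\circ f\}\) for the membership assertion). So from now on \(\Vecs = \C\) and I treat a set \(B \subseteq \Smooth(\Theta)\), writing \(\norm{\Cdot}_{K,\Weight*,\ell}\) for the seminorm built analogously from \(\Weight*\).

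For sufficiency, the relation \(\Weight* = o(\Weight)\) transfers to the exponential weights: writing \(\psi(t) = \Weight*(e^t)\), for every \(\epsilon > 0\) there is \(C_\epsilon\) with \(\psi(t) \le \epsilon\,\WeightExp(t) + C_\epsilon\). Passing to Young conjugates reverses this into \(\WeightYoung[\psi](s) \ge \epsilon\,\WeightYoung(s/\epsilon) - C_\epsilon\). Given a compact \(K\) and \(\ell > 0\), I pick \(h\) from the hypothesis, set \(\epsilon = \ell/h\), and specialize to \(s = \ell\abs{\alpha}\) to obtain \(\tfrac1\ell \WeightYoung[\psi](\ell\abs{\alpha}) \ge \tfrac1h\WeightYoung(h\abs{\alpha}) - C_\epsilon/\ell\). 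Hence \(\norm{g}_{K,\Weight*,\ell} \le e^{C_\epsilon/\ell}\,\norm{g}_{K,\Weight,h}\) for every \(g\), and taking the supremum over \(B\) yields the required boundedness in each \(\BDiff[\Weight*](\Theta)\).

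The reverse implication is the heart of the matter, and I would argue by contradiction. If the local condition fails for some compact \(K\), then \(\sup_{g \in B}\norm{g}_{K,\Weight,n} = \infty\) for every \(n\), so I may extract \(g_n \in B\), \(x_n \in K\) and \(\alpha_n\) with
\[
  \abs{g_n^{(\alpha_n)}(x_n)} \exp\left(-\tfrac1n\WeightYoung(n\abs{\alpha_n})\right) > 2^n.
\]
Boundedness of \(B\) in the larger classes \(\BDiff[\Weight*]\) bounds \(\sup_{g \in B,\, x \in K}\abs{g^{(\alpha)}(x)}\) for each fixed \(\alpha\), which forces \(\abs{\alpha_n} \to \infty\); I then pass to a sparse subsequence. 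The crux is to manufacture a weight function \(\Weight* = o(\Weight)\) whose Young conjugate remains comparable to \(\WeightYoung\) at the scales \(\abs{\alpha_n}\), so that for a single fixed \(\ell\) the blow-up survives, i.e.\ \(\sup_{g \in B}\norm{g}_{K,\Weight*,\ell} = \infty\), contradicting the assumption. I would build \(\Weight*\) by the classical Komatsu-type device: interpolate a convex, piecewise-linear minorant of \(\WeightExp\) that agrees with \(\WeightExp\) at the sparse nodes dual to the \(\abs{\alpha_n}\) while growing strictly slower elsewhere, and then verify the weight axioms \((\alpha)\)--\((\delta)\) together with \(\Weight* = o(\Weight)\). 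Carrying out this interpolation so that the blow-up stays detectable while only a vanishing proportion of the weight is surrendered is the main obstacle; everything else is the bookkeeping of the reduction above.
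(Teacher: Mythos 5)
Your reduction of the vector-valued statement to the scalar one via the bipolar theorem is exactly the paper's argument, and your Beurling (``if'') direction is complete and correct: the conjugate inequality \(\psi^{*}(s)\geq \epsilon\,\WeightYoung(s/\epsilon)-C_\epsilon\), obtained from \(\psi\leq\epsilon\WeightExp+C_\epsilon\), is all that is needed there. Where you part ways with the paper is the scalar core: the paper does not argue directly at all, but quotes the projective description \(\RDiff(\Theta)=\varprojlim_{K}\varinjlim_{h}\SDiff{h}(K)\) of \cite[Corollary 2]{DPV22} together with the regularity of the \((LB)\)-spaces \(\varinjlim_{h}\SDiff{h}(K)\) from \cite{Bierstedt}, from which both assertions of the lemma are immediate. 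Your plan to prove the hard (``only if'') direction by hand is legitimate in principle---it is the classical Komatsu-type argument---but this is precisely where your proposal has a genuine gap.

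The gap is twofold. First, the key construction is simply not carried out; you acknowledge it as ``the main obstacle.'' Second, and more seriously, the device as you describe it cannot work: a convex minorant of \(\WeightExp\) that \emph{agrees with \(\WeightExp\) at sparse nodes} tending to infinity can never satisfy \(\Weight*(t)=o(\Weight(t))\), since the little-\(o\) relation fails at those very nodes. What rescues the argument is the factor \(\tfrac{1}{n}\) in your blow-up inequality, which your phrase ``Young conjugate comparable to \(\WeightYoung\) at the scales \(\abs{\alpha_n}\)'' glosses over: the correct lines are the supporting lines of \(\WeightExp/n\), namely \(L_n(u)=\abs{\alpha_n}\,u-\tfrac{1}{n}\WeightYoung(n\abs{\alpha_n})\), which satisfy \(L_n\leq\WeightExp/n\) by conjugate duality. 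Combined with condition \((\gamma)\) (which makes every linear function \(o(\WeightExp)\)), this shows that \(\rho(e^{u})\coloneqq\max\bigl(0,\sup_n L_n(u)\bigr)\) defines a function with \(\rho=o(\Weight)\), while by construction \(\sup_{u}\bigl(\abs{\alpha_n}u-\rho(e^{u})\bigr)\leq\tfrac{1}{n}\WeightYoung(n\abs{\alpha_n})\) for every \(n\). Moreover, you need not verify the weight axioms by hand: \cref{thm:smaller-weight-function}, i.e.\ \cite[Lemma 1.7]{BMT}, upgrades \(\rho\) to a genuine weight function \(\Weight*\) with \(\rho=o(\Weight*)\) and \(\Weight*=o(\Weight)\); then \(\psi(u)\coloneqq\Weight*(e^{u})\geq\rho(e^{u})-O(1)\) gives \(\psi^{*}(\abs{\alpha_n})\leq\tfrac{1}{n}\WeightYoung(n\abs{\alpha_n})+O(1)\), so \(\sup_{f\in B}\norm{f}_{K,\Weight*,1}=\infty\), contradicting boundedness of \(B\) in \(\BDiff[\Weight*](\Theta)\). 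Until this (or an equivalent) construction is actually supplied, your proof of the necessity direction---and hence of the lemma---is incomplete at its crux.
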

\begin{proof}
    It is shown in \cite[Corollary 2]{DPV22} that 
    \begin{equation}
        \label{topC}
        \RDiff(\Theta) 
        = \varprojlim_{K \Subset \Theta} \varinjlim_{h \to \infty} \SDiff{h}(K)
    \end{equation}
    as lcHs. 
    Here, \(K\) runs over all topologically regular compact subsets of \(\Theta\) (i.e.\ \(\overline{\Int K} = K\)) and \(\SDiff{h}(K)\) denotes the Banach space consisting of all \(f \in \Smooth(K)\) such that \(\norm{f }_{K,\Weight,h}<\infty\).
    By \cite[p.\ 80, Corollary 7]{Bierstedt}, the \((LB)\)-space 
    \[
        \RDiff(K) = \varinjlim_{h \to \infty} \SDiff{h}(K)
    \]
    is regular for each \(K\) (i.e. every bounded set in \(\RDiff(K)\) is contained and bounded in \(\SDiff{h}(K)\) for some \(h >0\)).
    Hence, the result for \(\Vecs = \C\) follows from the topological identity \eqref{topC}.
    Next, we show the result for general normed spaces \(\Vecs\).
    It suffices to show that a set \(B \subseteq \Smooth(\Theta;\Vecs)\) is contained and bounded in \(\RDiff(\Theta;\Vecs)\) if and only if
    \begin{equation}
        \label{equ:boundedness-condition}
        \forall K \subseteq \Theta \text{ compact } \exists h >0 
        \colon \sup_{f \in B} p_{K,\Weight,h}(f) < \infty
        .
    \end{equation}
    The first statement then follows by applying this to the singleton set \(B = \{f\}\).
    It is clear that \(B \subseteq \Smooth(\Theta;\Vecs)\) is contained and bounded in \(\RDiff(\Theta;\Vecs)\) if \(B\) satisfies \eqref{equ:boundedness-condition}.
    Conversely, let \(B \subseteq \Smooth(\Theta;\Vecs)\) be bounded.
    For \(v' \in \Vecs'\) and \(f \colon \Theta \to \Vecs\), we define the mapping \(\Eval{v'}{f} \colon \Theta \to \C, \, x \mapsto \Eval{v'}{f(x)}\).
    If \(f \in \Smooth(\Theta;E)\), then \(\Eval{v'}{f} \in \Smooth(\Theta)\) and \(\Eval{v'}{f}^{(\alpha)} = \Eval{v'}{f^{(\alpha)}}\) for all \(\alpha \in \N^d\) (see also \cref{thm:weak-smooth} below).
    By \eqref{equ:bipolar-theorem}, we obtain that for all \(h > 0\) and \(K \subseteq \RR\) compact
    \[
        p_{K,\Weight,h}(f)
        = \sup_{v' \in V_{p}^{\circ}} \norm{\Eval{v'}{f}}_{K,\Weight,h}
        , \qquad f \in \Smooth(\Theta;\Vecs)
        .
    \]
    The set \(\{ \Eval{v'}{f} \mid f \in B , v' \in V_p^\circ \}\) is bounded in \(\RDiff(\Theta)\).
    Hence, by the first part of the proof, we obtain that for all \(K \subseteq \Theta\) compact there is \(h >0\) such that 
    \[
        \sup_{f \in B} p_{K,\Weight,h}(f)
        = \sup_{f \in B} \sup_{v' \in V^\circ_p, } \norm{\Eval{v'}{f}}_{K,\Weight,h} 
        < \infty
        .
        \qedhere
    \]
\end{proof}

\begin{example}\label{exa:gevrey-space}
    Let \((\Vecs,p)\) be a normed space.
    By \cref{exa:gevrey-conjugate} and \cref{thm:inductive-description-roumieu}, \(f \in \Smooth(\Theta;\Vecs)\) belongs to \(\RDiff[\Weight_s](\Theta;\Vecs)\) if and only if for all \(K \subseteq \Theta\) compact there is \(h >0\) such that
    \[
        p_{K,s,h}(f) 
        = \sup_{x \in K} \sup_{\alpha \in \N^\Dim} \frac{p(f^{(\alpha)}(x))}{h^{\abs{\alpha}} (\abs{\alpha}!)^s} 
        < \infty
        .
    \]
    In particular, \(\RDiff[\Weight_1](\Theta;\Vecs)\) is equal to the space \(\mathcal{A}(\Theta;\Vecs)\) of \(\Vecs\)-valued real analytic functions on \(\Theta\).
\end{example}

Note that, by condition \((\beta)\), we have \(\mathcal{A}(\Theta) \subseteq \RDiff(\Theta)\).
We will always tacitly assume that \(\Weight\) satisfies \((\beta_0)\) in the Beurling case.
This assumption implies that \(\mathcal{A}(\Theta) \subseteq \BDiff(\Theta)\).
With this convention, ultradifferentiability is preserved under composition from the right with real analytic maps:

\begin{lemma}\label{thm:analytic-composition}
    Let \(\Theta, \Theta' \subseteq \RR\) be open, and let \(\Vecs\) be a lcHs.
    Let \(\phi \colon \Theta \to \Theta'\) be real analytic.
    Then, \(f \circ \phi \in \UDiff(\Theta;\Vecs)\) for all \(f \in \UDiff(\Theta';\Vecs)\).
\end{lemma}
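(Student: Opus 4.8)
The plan is to reduce the statement to a uniform a priori estimate for scalar-valued $g$ and then to settle that estimate with the multivariate Faà di Bruno formula together with the Cauchy estimates for $\phi$. Since $\phi$ is in particular smooth, $f\circ\phi\in\Smooth(\Theta;\Vecs)$ by the chain rule, so only finiteness of the seminorms $p_{K,\Weight,h}(f\circ\phi)$ is at stake. For $v'\in\Vecs'$ one has the elementary identity $\Eval{v'}{f\circ\phi}=\Eval{v'}{f}\circ\phi$, and exactly as in the proof of \cref{thm:inductive-description-roumieu} (using \eqref{equ:bipolar-theorem})
\[
    p_{K,\Weight,h}(f\circ\phi)=\sup_{v'\in V_p^\circ}\norm{\Eval{v'}{f}\circ\phi}_{K,\Weight,h},\qquad K\subseteq\Theta\ \text{compact},\ h>0,\ p\in\csn(\Vecs).
\]
As $\Eval{v'}{f}\in\UDiff(\Theta')$ for every $v'$, it suffices to establish the following \emph{scalar} estimate, uniform in $g$: for every compact $K\subseteq\Theta$ there is a compact $K'\subseteq\Theta'$ such that for every $h'>0$ one finds $h>0$ and $C>0$ with $\norm{g\circ\phi}_{K,\Weight,h}\le C\norm{g}_{K',\Weight,h'}$ for all $g\in\UDiff(\Theta')$. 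Indeed, applying this to $g=\Eval{v'}{f}$ and taking the supremum over $v'\in V_p^\circ$ gives $p_{K,\Weight,h}(f\circ\phi)\le C\,p_{K',\Weight,h'}(f)$.

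To prove the scalar estimate, put $K'=\phi(K)$. Real analyticity of $\phi$ lets it extend holomorphically to a complex neighbourhood of $K$, and compactness yields Cauchy estimates: there are $C_0,A>0$ with
\[
    \sup_{x\in K}\abs{\partial^\beta\phi_l(x)}\le C_0\,A^{\abs{\beta}}\beta!,\qquad \beta\in\N^{\Dim},\ 1\le l\le\Dim.
\]
For $g\in\UDiff(\Theta')$ and $x\in K$, the definition of the seminorm gives $\abs{(\partial^\gamma g)(\phi(x))}\le\norm{g}_{K',\Weight,h'}\exp(\tfrac{1}{h'}\WeightYoung(h'\abs{\gamma}))$. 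The multivariate Faà di Bruno formula writes $\partial^\alpha(g\circ\phi)(x)$ as a finite sum, over $1\le k\le\abs{\alpha}$ and over the ways of splitting $\alpha$ into $k$ nonzero blocks, of terms $(\partial^\gamma g)(\phi(x))$ with $\abs{\gamma}=k$ times a product of $k$ partial derivatives of the components of $\phi$ whose orders add up to $\alpha$.

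The recombination of this sum into a single bound of the form $C\exp(\tfrac{1}{h}\WeightYoung(h\abs{\alpha}))$ is the step I expect to be the main obstacle. Write $M^{h'}_k=\exp(\tfrac{1}{h'}\WeightYoung(h'k))$. After inserting the Cauchy estimates, the $\phi$-factors in a term contribute at most $C_0^k A^{\abs{\alpha}}$ times the product of the factorials of the block orders, which is bounded by $\abs{\alpha}!$; summing the combinatorial multiplicities and using the monotonicity $M^{h'}_k\le M^{h'}_{\abs{\alpha}}$ then yields $\abs{\partial^\alpha(g\circ\phi)(x)}\le\norm{g}_{K',\Weight,h'}\,D^{\abs{\alpha}}\abs{\alpha}!\,M^{h'}_{\abs{\alpha}}$ for some $D>0$. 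It remains to show $D^{m}m!\,M^{h'}_m\le C\,M^{h}_m$ for a suitable $h\ge h'$. Here the convexity $(\delta)$ makes $k\mapsto\tfrac{1}{h'}\WeightYoung(h'k)$ convex (so $M^{h'}$ is logarithmically convex); condition $(\beta)$ (resp.\ the standing assumption $(\beta_0)$), equivalently $\mathcal{A}(\Theta')\subseteq\UDiff(\Theta')$, dominates the analytic growth $m!$ by a sequence of the same type; and condition $(\alpha)$ supplies the moderate growth that absorbs the geometric factors $D^m$ while passing from $M^{h'}$ to $M^{h}$. The dictionary between the $\WeightYoung$-bounds used here and the $\Weight$-bounds is precisely \eqref{equ:estimate-weight-young}, which also records that the admissible $h$ increases with $h'$ and that $h\downarrow0$ as $h'\downarrow0$.

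Finally, the two classes are read off from this parameter relation. For the Roumieu case, \cref{thm:inductive-description-roumieu} provides, for the given $K'$, some $h'$ with $p_{K',\Weight,h'}(f)<\infty$; taking the corresponding $h$ gives $p_{K,\Weight,h}(f\circ\phi)<\infty$, and a second application of \cref{thm:inductive-description-roumieu} places $f\circ\phi$ in $\RDiff(\Theta;\Vecs)$. For the Beurling case, $p_{K',\Weight,h'}(f)<\infty$ for all $h'$; since $\norm{\Cdot}_{K,\Weight,h}$ decreases in $h$ and $h\downarrow0$ as $h'\downarrow0$, any prescribed target $h$ is reached by choosing $h'$ small enough, giving $p_{K,\Weight,h}(f\circ\phi)<\infty$ for all $h$ and hence $f\circ\phi\in\BDiff(\Theta;\Vecs)$.
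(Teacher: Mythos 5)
Your skeleton is sound and, in fact, matches the intended argument: the paper's own proof is a one-line instruction to adapt H\"ormander's Proposition 8.4.1, which is precisely a Fa\`a di Bruno plus Cauchy-estimate argument, and your reduction to a uniform scalar estimate via \eqref{equ:bipolar-theorem} is unobjectionable. The problem is the step you yourself single out as the main obstacle: your resolution of it is false. Write \(M^{h}_m=\exp(\tfrac1h\WeightYoung(hm))\). You bound the \(g\)-factor of each Fa\`a di Bruno term by monotonicity, \(M^{h'}_k\le M^{h'}_{\abs{\alpha}}\), and separately bound the product of block factorials from \(\phi\) by \(\abs{\alpha}!\), arriving at \(\abs{\partial^\alpha(g\circ\phi)}\le\norm{g}_{K',\Weight,h'}D^{\abs{\alpha}}\abs{\alpha}!\,M^{h'}_{\abs{\alpha}}\), and you then need \(D^m m!\,M^{h'}_m\le C\,M^{h}_m\) for some \(h\). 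This last inequality fails for exactly the weights this paper is about. By \cref{exa:gevrey-conjugate}, for the Gevrey weight \(\Weight_s\) one has \(M^h_m=e^{1/h}\bigl(h/(se)\bigr)^{m/s}m^{m/s}\), so the powers \(m^{m/s}\) cancel between the two sides and the requirement collapses to \(m!\le C'\bigl((h/h')^{1/s}/D\bigr)^m\), i.e.\ a geometric bound on \(m!\) --- impossible for every choice of \(h,h',D,C\). In particular it fails for \(\Weight_1\) (the real-analytic Roumieu case behind \cref{th analytic}) and for every \(\Weight_s\), \(s<1\), in both the Beurling and Roumieu settings. Neither convexity, nor condition \((\alpha)\), nor \eqref{equ:estimate-weight-young} can help: for these weights the quotient \(M^h_m/M^{h'}_m\) is only of geometric size in \(m\), so no change of the parameter \(h\) absorbs a factorial.

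The factorial has to be cancelled against the gap between \(M^{h'}_k\) and \(M^{h'}_{\abs{\alpha}}\), which is precisely the information your monotonicity step throws away. In a term where \(k\) derivatives fall on \(g\), the product of block factorials is at most \(\abs{\alpha}!/k!\) (up to a factor \(C^{\abs{\alpha}}\)), not merely \(\abs{\alpha}!\); and the standing assumptions give a quantitative form of ``the class contains the analytic class'': convexity of \(\WeightYoung\) together with \((\beta_0)\) (Beurling) resp.\ \((\beta)\) (Roumieu) yields \(M^{h'}_{j+1}/M^{h'}_j\gtrsim Rj\), hence
\[
\frac{m!}{k!}\,M^{h'}_k\;\le\;C_R\,R^{-(m-k)}\,M^{h'}_m,\qquad k\le m,
\]
for every \(R>0\) under \((\beta_0)\), and for some \(R>0\) (depending on \(h'\)) under \((\beta)\). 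Inserting this \emph{inside} the sum over \(k\), before taking any crude maxima, gives \(\abs{\partial^\alpha(g\circ\phi)}\le C\norm{g}_{K',\Weight,h'}D^{\abs{\alpha}}M^{h'}_{\abs{\alpha}}\) with no stray factorial; the geometric factor \(D^{\abs{\alpha}}\) is then removed using \((\alpha)\) at the cost of replacing \(h'\) by \(h=Lh'\), which is proportional to \(h'\), so the Beurling requirement \(h\downarrow0\) as \(h'\downarrow0\) survives (in the Roumieu case any finite \(h\) suffices, so the \(h'\)-dependence of \(R\) is harmless). A second, smaller gap: in the Roumieu case you invoke \cref{thm:inductive-description-roumieu} for a general lcHs \(\Vecs\), but that lemma is stated and proved only for normed \(\Vecs\). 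For general \(\Vecs\) one should instead argue from the definition \(\RDiff(\Theta;\Vecs)=\varprojlim_{\Weight*=o(\Weight)}\BDiff[\Weight*](\Theta;\Vecs)\): it suffices to prove the Beurling statement for every weight \(\Weight*=o(\Weight)\), and each such \(\Weight*\) automatically satisfies \((\beta_0)\) because \(\Weight\) satisfies \((\beta)\).
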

\begin{proof} 
    This can be shown by adapting the proof of \cite[Proposition 8.4.1]{Hoermander90}; the details are left to the reader.
\end{proof}

\subsection{Ultradifferentiability on manifolds}
\label{sec:ultradifferentiability-manifold}

Fix a real analytic manifold \(\Mnf\) of dimension \(\Dim\) and let \(\Vecs\) be a lcHs.
We define \(\Smooth(\Mnf;\Vecs)\) (\(\UDiff(\Mnf;\Vecs)\)) as the space consisting of all \(f \colon \Mnf \to \Vecs\) such that \(f \circ \phi^{-1} \in \Smooth(\phi(U); \Vecs)\) (\(f \circ \phi^{-1} \in \UDiff(\phi(U); \Vecs)\)) for all real analytic charts \((\phi,U)\) of \(\Mnf\).
We endow \(\Smooth(\Mnf;\Vecs)\) (\(\UDiff(\Mnf;\Vecs)\)) with the initial topology with respect to the mappings
\begin{gather*}
    \Smooth(\Mnf;\Vecs) \to \Smooth(\phi(U); \Vecs), \, f \mapsto f \circ \phi^{-1}, \\
    (\UDiff(\Mnf;\Vecs) \to \UDiff(\phi(U); \Vecs), \, f \mapsto f \circ \phi^{-1}),
\end{gather*}
where \((\phi,U)\) runs over all (real analytic) charts of \(\Mnf\). The spaces \(\Smooth(\Mnf;\Vecs)\) and \(\UDiff(\Mnf;\Vecs)\) are sequentially complete if \(\Vecs\) is so.
We write \(\Smooth(\Mnf) = \Smooth(\Mnf;\C)\) and \(\UDiff(\Mnf) = \UDiff(\Mnf;\C)\).
Instead of a maximal atlas, one may also work with any covering atlas in the previous definitions.
This is obviously true for \(\Smooth(\Mnf;\Vecs)\) and follows from \cref{thm:analytic-composition} in the ultradifferentiable case.

Given \(v' \in \Vecs'\) and \(f \colon M \to \Vecs\), we define the mapping
\[
    \Eval{v'}{f} \colon  M \to \C, \, x \mapsto \Eval{v'}{f(x)}
    .
\]
If \(f \in \Smooth(M;\Vecs)\), then \(\Eval{v'}{f} \in \Smooth(M)\) and \(P\Eval{v'}{f} = \Eval{v'}{Pf}\) holds for each smooth linear differential operator \(P\) on \(M\).
\begin{lemma}\label{thm:weak-smooth}
    Let \(\Vecs\) be a sequentially complete lcHs and let \(f \colon \Mnf \to \Vecs\).
    \begin{enumerate}
        \item \(f \in \Smooth(\Mnf;\Vecs)\) if and only if \(\Eval{v'}{f} \in \Smooth(\Mnf)\) for all \(v' \in \Vecs'\).
        \item \(f \in \UDiff(\Mnf;\Vecs)\) if and only if \(\Eval{v'}{f} \in \UDiff(\Mnf)\) for all \(v' \in \Vecs'\).
    \end{enumerate}
\end{lemma}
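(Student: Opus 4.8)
The plan is to reduce both statements to the Euclidean setting and then to exploit the elementary but crucial principle that a subset of an lcHs is bounded as soon as it is \emph{weakly} bounded, i.e.\ bounded for $\sigma(\Vecs,\Vecs')$ (Mackey's theorem, valid in any lcHs). The forward implication of (i) is exactly the observation recorded just before the lemma. For the forward implication of (ii), given $v'$ choose $p\in\csn(\Vecs)$ with $\abs{\Eval{v'}{w}}\le p(w)$; in a chart the identity $\Eval{v'}{f}^{(\alpha)}=\Eval{v'}{f^{(\alpha)}}$ yields $\norm{\Eval{v'}{f}\circ\phi^{-1}}_{K,\Weight,h}\le p_{K,\Weight,h}(f\circ\phi^{-1})<\infty$, so $\Eval{v'}{f}\in\UDiff(\Mnf)$. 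For the converse in both parts, using the chart definitions and $\Eval{v'}{f}\circ\phi^{-1}=\Eval{v'}{f\circ\phi^{-1}}$ (a covering atlas being admissible by \cref{thm:analytic-composition}), it suffices to treat $\Mnf=\Theta\subseteq\RR$ open.

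Assume first $\Eval{v'}{f}\in\Smooth(\Theta)$ for all $v'$; we claim $f\in\Smooth(\Theta;\Vecs)$, a classical weak-implies-strong fact of Grothendieck type. Fix $z\in\Theta$, a direction $e_j$, and $\delta>0$ small, and put $g_{v'}(s)=\Eval{v'}{f}(z+se_j)$ for $\abs{s}\le2\delta$. Consider the first and second difference quotients $u_t=t^{-1}(f(z+te_j)-f(z))$ and $w_{s,t}=t^{-2}(f(z+(s+t)e_j)-2f(z+se_j)+f(z+(s-t)e_j))$, which are genuine elements of $\Vecs$. For each $v'$ the mean value theorem for the $\Smooth[2]$ functions $g_{v'}$ gives $\abs{\Eval{v'}{w_{s,t}}}\le\sup_{\abs{u}\le2\delta}\abs{g_{v'}''(u)}<\infty$, so $W=\{w_{s,t}\mid\abs{s}\le\delta,\ 0<\abs{t}\le\delta\}$ is weakly bounded, hence bounded: $\sup_{w\in W}p(w)=:C_p<\infty$ for every $p\in\csn(\Vecs)$. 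Taylor's formula $g_{v'}(t)=g_{v'}(0)+tg_{v'}'(0)+\tfrac{t^2}{2}g_{v'}''(\xi_t)$ together with \eqref{equ:bipolar-theorem} and the bound $\abs{g_{v'}''(\xi)}\le C_p$ for $v'\in V^\circ_p$ then yields $p(u_t-u_{t'})\le\tfrac12 C_p(\abs{t}+\abs{t'})$, so $(u_t)_t$ is Cauchy as $t\to0$; sequential completeness produces $\partial_jf(z):=\lim_{t\to0}u_t\in\Vecs$ with $\Eval{v'}{\partial_jf}=\partial_j\Eval{v'}{f}$. Thus $\partial_jf$ is again weakly smooth, and iterating shows that all $\partial^\alpha f$ exist in $\Vecs$, are locally bounded (being weakly continuous), and satisfy $\Eval{v'}{\partial^\alpha f}=\partial^\alpha\Eval{v'}{f}$; a mean value estimate via \eqref{equ:bipolar-theorem} gives their continuity, so $f\in\Smooth(\Theta;\Vecs)$, proving (i).

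For (ii) in the Beurling case $\UDiff=\BDiff$, part (i) already gives $f\in\Smooth(\Theta;\Vecs)$ with $\Eval{v'}{f^{(\alpha)}}=\Eval{v'}{f}^{(\alpha)}$. Fix $K\subseteq\Theta$ compact, $h>0$, and $p\in\csn(\Vecs)$, and set $A=\{\exp(-\tfrac1h\WeightYoung(h\abs{\alpha}))f^{(\alpha)}(x)\mid x\in K,\ \alpha\in\N^\Dim\}\subseteq\Vecs$. For each $v'$ the hypothesis $\Eval{v'}{f}\in\BDiff(\Theta)$ gives $\sup_{a\in A}\abs{\Eval{v'}{a}}=\norm{\Eval{v'}{f}}_{K,\Weight,h}<\infty$, so $A$ is weakly bounded, hence bounded; therefore $p_{K,\Weight,h}(f)=\sup_{a\in A}p(a)<\infty$. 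As $K,h,p$ were arbitrary, $f\in\BDiff(\Theta;\Vecs)$.

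The Roumieu case follows formally: by definition $\RDiff(\Theta;\Vecs)=\varprojlim_{\Weight*=o(\Weight)}\BDiff[\Weight*](\Theta;\Vecs)$ and likewise for scalars, so membership reduces to membership in $\BDiff[\Weight*]$ for every weight $\Weight*$ with $\Weight*=o(\Weight)$; applying the Beurling case to each such $\Weight*$ gives $f\in\RDiff(\Theta;\Vecs)$. The routine reductions aside, the only real work is the weak-to-strong smoothness of (i); I expect the delicate point to be upgrading the pointwise (weak) convergence of the difference quotients to convergence in $\Vecs$, which is exactly where I use that the second difference quotients lie in $\Vecs$ and that weak boundedness of $W$ furnishes the uniform constant $C_p$ over the polar $V^\circ_p$, sequential completeness of $\Vecs$ being what finally closes the argument.
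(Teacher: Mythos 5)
Your proof is correct and follows essentially the same route as the paper: the paper likewise reduces to an open subset of \(\R^{n}\) via local coordinates and then invokes \cite[Theorem 2.14]{KM} for part (i) and the method of proof of \cite[Theorem 3.10]{Komatsu3} for part (ii) --- precisely the Mackey-boundedness (weakly bounded \(=\) bounded) and difference-quotient arguments, together with sequential completeness, that you have written out in full. The only difference is that you supply the details which the paper delegates to these references.
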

\begin{proof}
    By using local coordinates, we may assume that \(M \)  is an open subset of $\R^n$.
    In this case, the first result is well-known (cf.\ \cite[Theorem 2.14]{KM}) and the second one can be derived from it in the same way as in the proof of \cite[Theorem 3.10]{Komatsu3}.
\end{proof}

Let \(\Bnds\) denote the set of non-empty absolutely convex closed bounded subsets of \(\Vecs\).
For \(B \in \Bnds\), we define \(\Vecs_B\) as the subspace of \(\Vecs\) spanned by \(B\) and endow it with the topology generated by the Minkowski functional associated with \(B\).
Since \(\Vecs\) is Hausdorff, \(\Vecs_B\) is normed.
We denote the norm by \(\norm{\Cdot}^{B}\). 
Observe that \(\Vecs_B\) is a Banach space if \(\Vecs\) is sequentially complete \cite[Corollary 23.14]{MeiseVogtBook}.

A function \(f \in \Smooth(\Mnf;\Vecs)\) is called \emph{bornologically smooth} (\emph{bornologically ultradifferentiable} of class \([\Weight]\)) if \(f \in \Smooth(\Mnf;\Vecs_B)\) (\(f \in \UDiff(\Mnf;\Vecs_B)\)) for some \(B \in \Bnds\) \cite{Komatsu3,DPV21}.
The space consisting of such functions is denoted by \(\BSmooth(\Mnf;\Vecs)\) and \(\BUDiff(\Mnf;\Vecs)\), respectively.
We endow these spaces with a convex vector bornology in the following way: a set \(\tilde{B} \subset \BSmooth(\Mnf;\Vecs)\) (\(\tilde{B} \subset \BUDiff(\Mnf;\Vecs)\)) is defined to be bounded if there exists some \(B \in \Bnds\) such that \(\tilde{B}\) is contained and bounded in \(\Smooth(\Mnf;\Vecs_B)\) (\(\UDiff(\Mnf;\Vecs_B)\)).
The spaces \(\BSmooth(\Mnf;\Vecs)\) and \(\BUDiff(\Mnf;\Vecs)\) will play an essential role in the rest of this article.

\subsection{The theorem of iterates}\label{ssec:theorem-of-iterates} 
We fix again a real analytic manifold \(\Mnf\) of dimension \(\Dim\).
Let \(P\) be an elliptic smooth linear differential operator \(P\) on \(\Mnf\).
We write \(\nu \in \N\) for the degree of \(P\).
Let \(\Vecs\) be a lcHs. 
Given a compact subset \(K \subseteq \Mnf\) and \( p \in \csn(\Vecs)\), we define for 
$j \in \N$
\[
    p_{P,K,j}(f) 
    = \sup_{i \leq j}\sup_{x \in K} p(P^if(x))
    , \qquad f \in \Smooth(\Mnf;\Vecs),
\]
and for $h >0$
$$
p_{P,K,\Weight,h}(f) 
= \sup_{j \in \N}\sup_{x \in K} p(P^jf(x))\exp \left(-\frac{1}{h}\WeightYoung(h\nu j)\right) 
, \qquad f \in \Smooth(\Mnf;\Vecs).
$$
We set \(p_{P,K,j} = \norm{\Cdot}_{P,K,j}\) and \(p_{P,K,\Weight,h} = \norm{\Cdot}_{P,K,\Weight,h}\) whenever \(\Vecs = \C\) and \(p = \abs{\Cdot}\). We define \(\BDiff_P(\Mnf;\Vecs)\) as the space consisting of all \(f \in \Smooth(\Mnf;\Vecs)\) such that $p_{P,K,\Weight,h}(f) < \infty$ for all \(K \subseteq \Mnf\) compact, $h >0$, and \(p \in \csn(\Vecs)\), and endow it 
with the Hausdorff locally convex topology generated by the system of seminorms \(\{ p_{P,K,\Weight,h} \mid K \subseteq \Mnf \text{ compact}, h >0, p \in \csn(\Vecs) \}\).
We then define
$$
\RDiff_P(\Mnf;\Vecs)= \varprojlim_{\Weight* = o(\Weight)} \BDiff[\Weight*]_P(\Mnf;\Vecs)
$$
as lcHs. 
We  write \(\UDiff_P(\Mnf) =\UDiff_P(\Mnf;\C)\).
Recall that, given \(p \in \csn(\Vecs)\), we use the notation \(V_p = \{ v \in \Vecs \mid p(v) \leq 1 \}\) and write \(V^\circ_p\) for its polar set in \(\Vecs'\).

\begin{lemma}\label{thm:elliptic-smooth}
    Let \(P\) be an elliptic smooth linear differential operator \(P\) on \(\Mnf\) and let \(\Vecs\) be a lcHs.
    The topology of \(\Smooth(\Mnf;\Vecs)\) is generated by the system of seminorms \(\{ p_{P,K,j} \mid K \subseteq \Mnf \text{ compact}, j \in \N, p \in \csn(\Vecs) \}\).
\end{lemma}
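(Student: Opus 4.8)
The plan is to show that the two systems of seminorms dominate each other, so that they generate the same locally convex topology on \(\Smooth(\Mnf;\Vecs)\). One inclusion is elementary: since \(P\) has degree \(\nu\), each iterate \(P^i\) is a smooth linear differential operator of degree at most \(\nu i\), so in any real analytic chart \((\phi,U)\) the function \(P^i f\) is expressed through the derivatives of \(f\circ\phi^{-1}\) of order at most \(\nu j\) (for \(i\le j\)) with smooth, hence locally bounded, coefficients. Consequently every \(p_{P,K,j}\) is dominated by finitely many of the standard chart seminorms \(\sup_{x}\sup_{\abs{\alpha}\le \nu j} p(\partial^\alpha(f\circ\phi^{-1})(x))\) that generate the topology of \(\Smooth(\Mnf;\Vecs)\). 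The content of the lemma is therefore the reverse domination.

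Next I would reduce to the scalar case \(\Vecs=\C\) by duality. For \(v'\in\Vecs'\) one has \(\Eval{v'}{f}\in\Smooth(\Mnf)\) with \(P^i\Eval{v'}{f}=\Eval{v'}{P^i f}\) (by iterating the fact recorded just before the statement), and in a chart \(\partial^\alpha\Eval{v'}{f\circ\phi^{-1}}=\Eval{v'}{\partial^\alpha(f\circ\phi^{-1})}\). Combining these with the bipolar identity \eqref{equ:bipolar-theorem} gives, for every \(K\), \(j\), and \(p\),
\[
    p_{P,K,j}(f)=\sup_{v'\in V_p^\circ}\norm{\Eval{v'}{f}}_{P,K,j},
\]
together with the analogous representation of the standard chart seminorms. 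Hence it suffices to establish one scalar estimate, valid with a single constant for all \(g\in\Smooth(\Mnf)\): for each compact \(K\) and each \(m\in\N\) there are a compact \(K'\supseteq K\), an integer \(j\), and \(C>0\) with \(\norm{g}_{C^m(K)}\le C\,\norm{g}_{P,K',j}\). Applying it to \(g=\Eval{v'}{f}\) and taking the supremum over \(v'\in V_p^\circ\) then transfers the bound to arbitrary \(\Vecs\) and \(p\).

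The heart of the argument is this scalar elliptic estimate. I would localize, covering \(K\) by finitely many charts that carry \(P\) to an elliptic operator of order \(\nu\) on a bounded open \(\Omega\subseteq\RR\) (using that one may work with a covering atlas). Since \(P\) is elliptic, so is each \(P^i\), and the classical shifted elliptic a priori (Sobolev) estimate gives, for nested \(\Omega'\Subset\Omega\),
\[
    \norm{g}_{H^{s+\nu}(\Omega')}\le C\bigl(\norm{Pg}_{H^{s}(\Omega)}+\norm{g}_{H^{s}(\Omega)}\bigr).
\]
Iterating this over a finite chain of shrinking domains, an induction on \(j\) yields \(\norm{g}_{H^{\nu j}(\Omega_j)}\le C\sum_{i=0}^{j}\norm{P^i g}_{L^2(\Omega_0)}\). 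Choosing \(j\) so large that \(\nu j>m+\Dim/2\), the Sobolev embedding \(H^{\nu j}(\Omega_j)\hookrightarrow C^m\) controls \(\norm{g}_{C^m}\) on a neighborhood of the chart piece of \(K\), while each \(L^2\)-norm over the bounded \(\Omega_0\) is bounded by the supremum of \(\abs{P^i g}\) over \(\overline{\Omega_0}\). Patching the finitely many chart estimates and taking \(K'\) to be the union of the closures \(\overline{\Omega_0}\) gives \(\norm{g}_{C^m(K)}\le C\,\norm{g}_{P,K',j}\).

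I expect the main obstacle to be precisely this elliptic iterate estimate and its clean localization: one must arrange the nested domains and the bookkeeping of Sobolev orders so that iterating the a priori estimate converts \(L^2\)-control of \(g,Pg,\dots,P^j g\) into \(C^m\)-control, and one must check that ellipticity of \(P\) (equivalently of every power \(P^i\)) is exactly what licenses each step. Everything else — the elementary domination, the duality reduction, and the final transfer via the bipolar theorem — is then routine, and combining the two directions shows that the two families of seminorms generate the same topology.
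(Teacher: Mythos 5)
Your proposal is correct, but the crucial scalar step is handled by a genuinely different method than the paper's. The two reductions are the same in both arguments: the elementary domination of \(p_{P,K,j}\) by chart seminorms, and the passage to the scalar case \(\Vecs=\C\) via the bipolar identity \eqref{equ:bipolar-theorem}. For the reverse domination, however, the paper argues softly: it introduces the space \(X\) of distributions \(f\in\mathcal{D}'(\Theta)\) with \(P^jf\in C(\Theta)\) for all \(j\in\N\), equipped with the \(P\)-iterate seminorms, notes that \(X\) is a Fréchet space, proves the inclusion \(X\subseteq C^\infty(\Theta)\) merely \emph{as sets} using the qualitative elliptic regularity statement \eqref{elliptic-reg} for local Sobolev spaces, and then obtains the equality of topologies for free from the open mapping theorem. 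You instead prove the quantitative estimate \(\norm{g}_{C^m(K)}\le C\,\norm{g}_{P,K',j}\) directly, by iterating the interior elliptic a priori estimate over a chain of nested domains (your induction is sound: applying the inductive hypothesis to both \(g\) and \(Pg\) on successively larger domains gives \(\norm{g}_{H^{\nu j}(\Omega_j)}\le C\sum_{i=0}^{j}\norm{P^ig}_{L^2(\Omega_0)}\)) and finishing with the Sobolev embedding and a patching argument over finitely many charts. Both routes rest on standard elliptic theory, so neither is circular; the paper's version has the advantage that no constants, Sobolev orders, or nested domains need to be tracked — the open-mapping machinery supplies the seminorm estimate automatically — while yours is more self-contained at the functional-analytic level: it never needs the completeness of an auxiliary distribution space or the open mapping theorem, and it yields an explicit estimate with explicit loss (from \(K\) to \(K'\) and from \(m\) to any \(j\) with \(\nu j>m+\Dim/2\)). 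One small remark: your observation that each power \(P^i\) is elliptic is not actually needed; the iteration only uses ellipticity of \(P\) itself, applied at the successive Sobolev levels \(s=0,\nu,2\nu,\dots\).
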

\begin{proof}
   Using local coordinates, we may assume that \(\Mnf= \Theta\) is an open subset of \(\RR\).
    Given \(j \in \N\), \(K \subseteq \Theta\) compact, and \(p \in \csn(\Vecs)\), we define, for \(f \in \Smooth(\Theta;\Vecs)\),
    \[
        p_{K,j}(f) = \sup_{|\alpha| \leq j}\sup_{x \in K} p(f^{(\alpha)}(x)).
    \]
    The topology of \(\Smooth(\Theta;\Vecs)\) is generated by the system of seminorms \(\{ p_{K,j} \mid K \subseteq \Theta \text{ compact}, j \in \N, p \in \csn(\Vecs) \}\).
    For \(\Vecs = \C\) and \(p = \abs{\Cdot}\), we set \(p_{K,j} = \norm{\Cdot}_{K,j}\). 
    The equality \eqref{equ:bipolar-theorem} yields, for all \(f \in \Smooth(\Theta;\Vecs)\),
    \[
        p_{K,j}(f) = \sup_{v' \in V^\circ_p} \norm{\Eval{v'}{f}}_{K,j}
        \qquad \text{and} \qquad
        p_{P,K,j}(f) = \sup_{v' \in V^\circ_p} \norm{\Eval{v'}{f}}_{P,K,j}
        .
    \]
    These equalities imply that it suffices to consider the case \(\Vecs = \C\). 
    We define $X$ as the space consisting of all distributions $f \in \mathcal{D}'(\Theta)$ such that $P^j f \in C(\Theta)$ for all $j \in \N$, where $P^jf$ should be understood in the sense of distributions. We endow $X$ with the Hausdorff locally convex topology generated by the system of seminorms \(\{ \norm{\Cdot}_{K,j} \mid K \subseteq \Theta \text{ compact}, j \in \N \}\). Note that $X$ is a Fr\'echet space. It suffices to prove that $C^\infty(\Theta) = X$ as lcHs. By the open mapping theorem, it is enough to show that 
    $C^\infty(\Theta) \subseteq X$ with continuous inclusion and $X \subseteq C^\infty(\Theta)$ as sets.  The former is clear, we now show the latter.     Let $H^{\operatorname{loc}}_s(\Theta)$ be the local Sobolev space of order $s \in \R$.
    The elliptic regularity theorem for local Sobolev spaces  \cite[p.\ 214, (6.33)]{FollandPDE} implies that for all $s \in \R$
    \begin{equation}
        Pf \in H^{\operatorname{loc}}_s(\Theta) \, \Longrightarrow f \in  H^{\operatorname{loc}}_{s+ \nu}(\Theta), \qquad \forall f \in \mathcal{D}'(\Theta).
        \label{elliptic-reg}
    \end{equation}
    Let $f \in X$ be arbitrary. Note that $P^j f \in C(\Theta) \subseteq L^{2, \operatorname{loc}}(\Theta) =   H^{\operatorname{loc}}_0(\Theta)$ for all $j \in \N$. Hence, \eqref{elliptic-reg} implies that
    \[
        f \in \bigcap_{j \in \N} H^{\operatorname{loc}}_{\nu j}(\Theta) = C^\infty(\Theta).
        \qedhere
    \]
\end{proof}

The following result is essentially shown in \cite{FS}. 
\begin{theorem}
    \label{thm:elliptic-ultradifferentiable}
    Let \(P\) be an elliptic analytic linear differential operator \(P\) on \(\Mnf\) and let \(\Vecs\) be a lcHs.
    Then, \(\UDiff(\Mnf;\Vecs) = \UDiff_P(\Mnf;\Vecs)\) as lcHs.
\end{theorem}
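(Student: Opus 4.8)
The plan is to peel off, in four reduction steps, the generality of \cref{thm:elliptic-ultradifferentiable} until what remains is the scalar, Beurling, Euclidean statement, where the theorem becomes the \emph{theorem of iterates} for an elliptic analytic operator supplied by \cite{FS}.

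\emph{Reduction to the Beurling case.} It suffices to prove the Beurling identity \(\BDiff(\Mnf;\Vecs)=\BDiff_P(\Mnf;\Vecs)\) as lcHs for an arbitrary weight function (which we keep denoting by \(\Weight\)) satisfying \((\beta_0)\). Indeed, by definition \(\RDiff(\Mnf;\Vecs)=\varprojlim_{\Weight*=o(\Weight)}\BDiff[\Weight*](\Mnf;\Vecs)\) and \(\RDiff_P(\Mnf;\Vecs)=\varprojlim_{\Weight*=o(\Weight)}\BDiff[\Weight*]_P(\Mnf;\Vecs)\) are projective limits over the same index set with the same transition maps, so the Roumieu identity follows by taking limits once the Beurling identity is known for each \(\Weight*\). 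Every weight function \(\Weight*\) with \(\Weight*=o(\Weight)\) satisfies \((\beta_0)\): combining \(\Weight*=o(\Weight)\) with \((\beta)\) for \(\Weight\) gives \(\Weight*(t)=o(t)\). Hence all auxiliary weights fall under the Beurling case.

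\emph{Reduction to the scalar case.} Fix the Beurling weight and let \(f\in\Smooth(\Mnf;\Vecs)\) (membership in both spaces forces smoothness, and by \cref{thm:weak-smooth} this is detected by the functionals \(\Eval{v'}{f}\)). Using \(\Eval{v'}{f^{(\alpha)}}=\Eval{v'}{f}^{(\alpha)}\), \(\Eval{v'}{P^{j}f}=P^{j}\Eval{v'}{f}\), and the bipolar identity \eqref{equ:bipolar-theorem}, one obtains for every \(K\), \(h\), and \(p\in\csn(\Vecs)\)
\[
    p_{K,\Weight,h}(f)=\sup_{v'\in V^\circ_p}\norm{\Eval{v'}{f}}_{K,\Weight,h},
    \qquad
    p_{P,K,\Weight,h}(f)=\sup_{v'\in V^\circ_p}\norm{\Eval{v'}{f}}_{P,K,\Weight,h}.
\]
Thus \(f\in\BDiff(\Mnf;\Vecs)\) (resp.\ \(f\in\BDiff_P(\Mnf;\Vecs)\)) if and only if \(\{\Eval{v'}{f}\mid v'\in V^\circ_p\}\) is bounded in \(\BDiff(\Mnf)\) (resp.\ in \(\BDiff_P(\Mnf)\)) for every \(p\). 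Granting the scalar identity \(\BDiff(\Mnf)=\BDiff_P(\Mnf)\) as lcHs (established below), the two bornologies coincide, so the \(\Vecs\)-valued spaces agree as sets; feeding the scalar seminorm comparisons into the displayed identities (and using \(\sup_{v'}\max_i\le\max_i\sup_{v'}\)) transfers the estimates in both directions, giving equality of topologies. It remains to treat \(\Vecs=\C\).

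\emph{Reduction to the Euclidean case.} Cover \(\Mnf\) by real analytic charts. In local coordinates \(P\) becomes an elliptic analytic operator of the same order \(\nu\), since ellipticity of the principal symbol and analyticity of the coefficients are preserved under the analytic chart transitions. As \(P\) is local and each compact \(K\subseteq\Mnf\) is covered by finitely many charts, the global seminorms \(\norm{\Cdot}_{P,K,\Weight,h}\) are equivalent to the corresponding chart seminorms; together with the chart definition of \(\BDiff(\Mnf)\) this reduces the scalar identity to the Euclidean statement \(\BDiff(\Theta)=\BDiff_P(\Theta)\) as lcHs for an elliptic analytic operator \(P\) on an open \(\Theta\subseteq\RR\). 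Both spaces here are Fr\'echet (for \(\BDiff_P(\Theta)\), completeness follows from the elliptic regularity already used in \cref{thm:elliptic-smooth}: a Cauchy sequence has all \(P^{j}\)-iterates converging in \(C(\Theta)\), so the limit lies in \(C^\infty(\Theta)\)). The inclusion \(\BDiff(\Theta)\hookrightarrow\BDiff_P(\Theta)\) is continuous: writing \(P^{j}=\sum_{\abs{\alpha}\le\nu j}c_{j,\alpha}\partial^{\alpha}\) with real analytic coefficients \(c_{j,\alpha}\), their Gevrey-\(1\) type growth on compacts is absorbed into the class because \(\mathcal{A}(\Theta)\subseteq\BDiff(\Theta)\) (with the weight bookkeeping handled via \eqref{equ:estimate-weight-young}), so \(\norm{\Cdot}_{P,K,\Weight,h}\) is dominated by ordinary-derivative seminorms. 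The reverse set inclusion \(\BDiff_P(\Theta)\subseteq\BDiff(\Theta)\) is the theorem of iterates from \cite{FS}. Hence the identity map \(\BDiff(\Theta)\to\BDiff_P(\Theta)\) is a continuous linear bijection between Fr\'echet spaces, so by the open mapping theorem it is a topological isomorphism.

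\emph{Main obstacle.} The genuine content is the reverse inclusion, i.e.\ the theorem of iterates: recovering control of \emph{all} derivatives of \(f\) from control of the single chain \(\{P^{j}f\}_{j}\), with constants uniform in \(j\) so as to reproduce precisely the weight \(\WeightYoung\). This delicate elliptic a priori estimate is exactly what is carried out in \cite{FS}; the work left to us is to match its formulation to the Young-conjugate seminorms used here and to run the three reductions above (Roumieu to Beurling, vector-valued to scalar, manifold to Euclidean).
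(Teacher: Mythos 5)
Your proposal is correct and follows essentially the same route as the paper: reduce Roumieu to Beurling via the projective-limit definition, reduce to the scalar case via the bipolar identity \eqref{equ:bipolar-theorem}, pass to local coordinates, and then invoke the theorem of iterates from \cite{FS} together with Fr\'echet-space functional analysis for the topological identity. The only cosmetic differences are that you use the open mapping theorem where the paper uses the closed graph theorem (both applied after noting that both spaces embed continuously into \(\Smooth(\Theta)\)), and that you sketch a direct coefficient estimate for the inclusion \(\BDiff(\Theta)\subseteq\BDiff_P(\Theta)\) where the paper simply cites \cite[Proposition 3.2 3]{FS}.
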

\begin{proof}
    By definition of the spaces in the Roumieu case, it is enough to consider the Beurling case.
    Using local coordinates, we may assume that \(\Mnf= \Theta\) is an open subset of \(\RR\).
    The equality \eqref{equ:bipolar-theorem} yields, for all \(p \in \csn(\Vecs)\), \(h >0\), \(K \subseteq \Theta\) compact, and \(f \in \Smooth(\Theta;\Vecs)\),
    \[
        p_{ K,\Weight,h}(f) = \sup_{v' \in V^\circ_p} \norm{\Eval{v'}{f}}_{K,\Weight,h}
        \qquad \text{and} \qquad
        p_{P, K,\Weight,h}(f) = \sup_{v' \in V^\circ_p} \norm{\Eval{v'}{f}}_{P, K,\Weight,h}        .
    \]
    So, these equalities imply that, once again, it suffices to consider the case \(\Vecs = \C\).
    We start by showing that \(\BDiff(\Theta) = \BDiff_P(\Theta)\) as sets.
    It follows from the proof of \cite[Proposition 3.2 3]{FS} (applied to the weight matrix associated with \(\Weight\), see \cite[Definition 2.9]{FS}) that \(\BDiff(\Theta) \subseteq \BDiff_P(\Theta)\).
    Next, we define \(\BDiff_{P,2}(\Theta)\) as the space consisting of all \(f \in \Smooth(\Theta)\) such that, for all \(h >0\) and \(K \subseteq \Theta\) compact,
    \begin{equation*}
        \sup_{j \in \N} \norm{P^j f}_{L^2(K)}\exp \left(-\frac{1}{h}\WeightYoung(h\nu j)\right) < \infty   .
    \end{equation*}
    It is clear that \(\BDiff_P(\Theta) \subseteq \BDiff_{P,2}(\Theta)\).
    Moreover, it is shown in \cite[Corollary 3.19]{FS} that \(\BDiff_{P,2}(\Theta) = \BDiff(\Theta)\).
    Therefore, we conclude that 
    \[
        \BDiff_P(\Theta) \subseteq \BDiff_{P,2}(\Theta) 
        =  \BDiff(\Theta)
        ,
    \]
    which establishes the claimed equality \(\BDiff(\Theta) = \BDiff_P(\Theta)\).
    We still need to verify that \(\BDiff(\Theta) = \BDiff_P(\Theta)\) as lcHs.
    \Cref{thm:elliptic-smooth} yields that \(\BDiff_P(\Theta)\) is continuously embedded into \(\Smooth(\Theta)\).
    This implies that \(\BDiff_P(\Theta)\) is a Fréchet space.
    Note that \(\BDiff(\Theta)\) is also a Fréchet space.
    Hence, by the closed graph theorem, it suffices to show that the inclusion mappings \(\BDiff(\Theta) \to \BDiff_P(\Theta)\) and \(\BDiff_P(\Theta) \to \BDiff(\Theta)\) have closed graph.
    This follows from the fact that \(\BDiff(\Theta)\) and \(\BDiff_P(\Theta)\) are continuously embedded into \(\Smooth(\Theta)\).
\end{proof}

\section{Ultradifferentiable vectors and the factorization theorem}
\label{sec:main-result}
Throughout the rest of this article we fix a compact (real) Lie group \(\LieG\) of dimension \(\Dim\) with identity element \(\Origin\).
We view the underlying manifold structure of \(\LieG\) as a real analytic one.
Furthermore, \(\Vecs\) will always denote a sequentially complete lcHs.
We fix a normalized Haar measure on \(\LieG\).
All integrals on \(\LieG\) will be meant with respect to this measure and all vector-valued integrals are to be interpreted in the weak sense (Gelfand-Pettis integral).
Since \(\LieG\) is compact and \(\Vecs\) is sequentially complete, the integral \(\int_\LieG f(x) \dl{x}\) exists for all \(f \in \Cont(\LieG;\Vecs)\).

We denote the group of linear topological isomorphisms of \(\Vecs\) by \(\GGL{\Vecs}\).
A \emph{representation} of \(\LieG\) on \(\Vecs\) is merely a group homomorphism \(\Rep \colon \LieG \to \GGL{\Vecs}\).
For \(v \in \Vecs\), the mapping
\[
    \Orbit{v} \colon \LieG \to \Vecs
    , \quad x \mapsto \Rep(x)v
    ,
\]
is called the \emph{orbit} of \(v\).
We denote by \(\SmoothV[0]\) the space consisting of all \(v \in \Vecs\) such that \(\Orbit{v} \in \Cont(\LieG;\Vecs)\).
The representation is said to be \emph{continuous} if \(\Vecs= \SmoothV[0]\).

From now on, let \(\Rep\) be a representation of \(\LieG\) on \(\Vecs\).
We define the spaces of smooth and ultradifferentiable vectors of class \([\Weight]\) as
\[
    \BSmoothV = \{v \in \Vecs \mid \Orbit{v} \in \BSmooth(\LieG;\Vecs)\} 
    \qquad \text{and} \qquad
    \BUDiffV = \{v \in \Vecs \mid \Orbit{v} \in \BUDiff(\LieG;\Vecs)\},
\]
respectively.
These spaces inherit a convex vector bornology from their natural embedding into \(\BSmooth(\LieG;\Vecs)\) and \(\BUDiff(\LieG;\Vecs)\).
Namely, a subset of vectors is bounded in \(\SmoothV\) (\(\BUDiffV\)) if the associated set of orbits is bounded in \(\BSmooth(\LieG;\Vecs)\) (\(\BUDiff(\LieG;\Vecs)\)).
We refer to \cite{D-H-V} for the study of some linear topological properties of (Beurling type) spaces of ultradifferentiable vectors.

\begin{remark}\label{rem:analytic-vectors-coincide}
    If \(\Vecs\) is a Fréchet space, the space \(\RAnalV\) of analytic vectors considered in \cite{G-K-L} coincides with our space \(\RDiffV[\Weight_1]\).
    By going to local coordinates, this follows from \cref{exa:gevrey-space} and \cite[Proposition 12]{B-D} together with the  remark \cite[`'Note added in Proof'', p.\ 34]{B-D}.
\end{remark}

We recall that $C(G) = C(G;\C)$ and that the same convention is used for other spaces as well.
The convolution of \(\Molf \in \Cont(\LieG)\) and \(f \in \Cont(\LieG;\Vecs)\) is defined as
\[
    (\Molf \ast f)(x) 
    = \int_{\LieG} \Molf(y) f(y^{-1}x) \dl{y}
    = \int_{\LieG} \Molf(xy) f(y^{-1}) \dl{y}
    , \qquad x \in \LieG
    .
\]
We have,
\begin{align}
    \label{equ:smoothening-smooth}
    \Smooth(\LieG) \ast \Cont(\LieG;\Vecs) &\subseteq \Smooth(\LieG;\Vecs), \\
    \label{equ:smoothening-ultradifferentiable}
    \UDiff(\LieG) \ast \Cont(\LieG;\Vecs) &\subseteq \UDiff(\LieG;\Vecs)
    .
\end{align}

As in the Introduction, we define,
for \(v \in \SmoothV[0]\) and \(\Molf \in \Cont(\LieG)\), 
\[
    \Action(\Molf)v 
    = \int_{\LieG} \Molf(y) \Orbit{v}(y) \dl{y} 
    = \int_{\LieG} \Molf(y) \, \Rep(y)v \dl{y} 
    .
\]
In particular, we have, for \(\Molf_{1},\Molf_{2} \in \Cont(\LieG)\), 
\begin{equation}
    \label{equ:algebra-representation}
    \Action(\Molf_1 \ast \Molf_{2})
    = \Action(\Molf_1) \circ \Action(\Molf_{2}).
\end{equation}

\begin{example}
    \label{exa:regular-representation}
    Consider the left regular representation \(\LRep\) on \(\Cont(\LieG;\Vecs)\), i.e., \(\LRep(x)f = L_x f\) for \(f \in \Cont(\LieG;\Vecs)\) and \(x \in \LieG\), where, as usual, \(L_y f(x) = f(y^{-1}x)\).
    This representation is always continuous.
    One can show that the spaces of smooth and ultradifferentiable vectors associated with this representation are given by \(\BSmooth(\LieG;\Vecs)\) and \(\BUDiff(\LieG;\Vecs)\), respectively.
    Clearly, \(\Action_L(\Molf)f = \Molf \ast f\) for \(\Molf \in \Cont(\LieG)\) and \(f \in \Cont(\LieG;\Vecs)\), where \(\Action_L\) denotes the action induced by \(\LRep\). 
\end{example}

As \(\Action\) is based on convolution, it smoothens vectors:
\begin{lemma}
    \label{thm:smoothening}
    Let \(\Rep\) be a representation of \(\LieG\) on \(\Vecs\), then
    \[
        \Action(\Smooth(\LieG))\SmoothV \subseteq \SmoothV 
        \qquad \text{and} \qquad
        \Action(\UDiff(\LieG))\BUDiffV \subseteq \BUDiffV
        .
    \]
\end{lemma}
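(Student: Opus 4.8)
The plan is to reduce both inclusions to the smoothing properties \eqref{equ:smoothening-smooth} and \eqref{equ:smoothening-ultradifferentiable} of the convolution, but applied with the lcHs \(\Vecs\) replaced by a suitable Banach disk. Fix \(v \in \SmoothV\) (resp.\ \(v \in \BUDiffV\)) and \(\Molf \in \Smooth(\LieG)\) (resp.\ \(\Molf \in \UDiff(\LieG)\)), and set \(w = \Action(\Molf)v\); as \(\Orbit{v} \in \Cont(\LieG;\Vecs)\) this is well defined. By the very definition of bornological (ultra)differentiability there is a \(B \in \Bnds\) with \(\Orbit{v} \in \Smooth(\LieG;\Vecs_B)\) (resp.\ \(\Orbit{v} \in \UDiff(\LieG;\Vecs_B)\)); in particular \(\Orbit{v} \in \Cont(\LieG;\Vecs_B)\), where \(\Vecs_B\) is a Banach space since \(\Vecs\) is sequentially complete.

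The first step is to compute the orbit of \(w\). Pulling the continuous linear map \(\Rep(x)\) through the Gelfand-Pettis integral and using the left invariance of the Haar measure gives
\[
    \Orbit{w}(x) = \Rep(x)\int_{\LieG}\Molf(y)\,\Rep(y)v\dl{y} = \int_{\LieG}\Molf(y)\,\Rep(xy)v\dl{y} = \int_{\LieG}\Molf(x^{-1}z)\,\Orbit{v}(z)\dl{z}.
\]
Writing \(g^{\vee}(x) = g(x^{-1})\) and substituting \(z = xt\), one checks the identity
\[
    \Orbit{w} = \bigl(\Molf \ast (\Orbit{v})^{\vee}\bigr)^{\vee},
\]
so that, up to the analytic involution \(x \mapsto x^{-1}\), the orbit \(\Orbit{w}\) is an ordinary convolution of the scalar function \(\Molf\) with the continuous \(\Vecs_B\)-valued function \((\Orbit{v})^{\vee}\); in particular every value \(\Orbit{w}(x)\) lies in \(\Vecs_B\).

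The conclusion then follows from the stated smoothing results. Applying \eqref{equ:smoothening-smooth} (resp.\ \eqref{equ:smoothening-ultradifferentiable}) with \(\Vecs\) taken to be \(\Vecs_B\) yields \(\Molf \ast (\Orbit{v})^{\vee} \in \Smooth(\LieG;\Vecs_B)\) (resp.\ \(\UDiff(\LieG;\Vecs_B)\)). Since inversion \(x \mapsto x^{-1}\) is a real analytic diffeomorphism of \(\LieG\), composition with it preserves these classes — trivially in the smooth case, and by \cref{thm:analytic-composition} in the ultradifferentiable case — so \(\Orbit{w} = (\Molf \ast (\Orbit{v})^{\vee})^{\vee}\) again lies in \(\Smooth(\LieG;\Vecs_B)\) (resp.\ \(\UDiff(\LieG;\Vecs_B)\)). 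Hence \(\Orbit{w} \in \BSmooth(\LieG;\Vecs)\) (resp.\ \(\BUDiff(\LieG;\Vecs)\)), i.e.\ \(w \in \SmoothV\) (resp.\ \(w \in \BUDiffV\)), which is the desired inclusion.

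The step requiring the most care is the bornological bookkeeping: the defining feature of \(\SmoothV\) and \(\BUDiffV\) is that the orbit must take values in a \emph{single} Banach disk, so it is essential that the disk \(B\) furnished by \(v\) still works for \(w\). This is exactly what the rewriting \(\Orbit{w} = (\Molf \ast (\Orbit{v})^{\vee})^{\vee}\) achieves, as it expresses \(\Orbit{w}\) entirely in terms of the \(\Vecs_B\)-valued datum \(\Orbit{v}\), never applying the (possibly \(\Vecs_B\)-unbounded) operators \(\Rep(x)\) to elements of \(\Vecs_B\). The remaining points are minor: the left/right convolution convention is absorbed into the involution \(g \mapsto g^{\vee}\), and the justification for differentiating under the integral sign is already built into \eqref{equ:smoothening-smooth} and \eqref{equ:smoothening-ultradifferentiable}.
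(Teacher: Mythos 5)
Your proof is correct and follows essentially the same route as the paper: the identity \(\check{\gamma}_{\Action(\Molf)v} = \Molf \ast \check{\gamma}_{v}\) (which is exactly your \(\Orbit{w} = (\Molf \ast (\Orbit{v})^{\vee})^{\vee}\) written on the checked side) combined with the smoothing inclusions \eqref{equ:smoothening-smooth} and \eqref{equ:smoothening-ultradifferentiable}. The only difference is that you spell out the bornological bookkeeping with the Banach disk \(\Vecs_B\) and the composition-with-inversion step via \cref{thm:analytic-composition}, both of which the paper leaves implicit.
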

\begin{proof}
    Note that
    \[
        \check{\gamma}_{\Action(\Molf)v }(x)
        = \Rep(x^{-1}) \Action(\Molf) v
        = \int_{\LieG} \Molf(y) \, \Rep(x^{-1}y)v \dl{y} 
        = \Molf \ast \check{\gamma}_{v}(x),
    \] 
    where \(\check{\gamma}_{v}(x) = \Orbit{v}(x^{-1})\). 
    Hence, the result follows from \eqref{equ:smoothening-smooth} and \eqref{equ:smoothening-ultradifferentiable}.
\end{proof}

The space \(\SmoothV\) (\(\BUDiffV\)) is then a left module over \(\Smooth(\LieG)\) (\(\UDiff(\LieG)\)) in view of \eqref{equ:algebra-representation} and \cref{thm:smoothening}. 
We are now ready to state the main result of this article.

\begin{theorem}
    \label{thm:factorization-vectors}
    Let \(\Rep\) be a representation of \(\LieG\) acting on \(\Vecs\).
    Then, \(\BUDiffV\) has the bounded strong factorization property as a left module over \(\UDiff(\LieG)\). 
    In particular,
    \[
        \BUDiffV
        = \Action(\UDiff(\LieG)) \BUDiffV
        .
    \]
\end{theorem}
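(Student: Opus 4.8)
The plan is to reduce the factorization $\BUDiffV = \Action(\UDiff(\LieG))\BUDiffV$ to a \emph{division problem for group Fourier coefficients}, which I would solve with a single central multiplier whose decay is a controlled ``fraction'' of the decay of the vectors in the prescribed bounded set. First I would set up the spectral decomposition coming from Peter--Weyl: for each $\IrredRepC \in \DLieG$ the element $e_\IrredRep := \IrredRepD\,\overline{\Trace \IrredRep}$ is a central idempotent under convolution (orthogonality of characters), so by \eqref{equ:algebra-representation} the operators $P_\IrredRep := \Action(e_\IrredRep)$ are mutually orthogonal projections on $\Vecs$ (defined on $\SmoothV[0]$) whose ranges are the isotypic components; the reconstruction $v = \sum_{\IrredRepC} P_\IrredRep v$, with convergence in $\Vecs$ for ultradifferentiable $v$, will be part of the Fourier characterization below. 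The Laplace--Beltrami operator $\Lapl$ is elliptic and analytic of degree $2$, and $\LaplP$ acts on the $\IrredRep$-isotypic part as the scalar $1+\IrredRepE$; this is the bridge to ultradifferentiability, since by the theorem of iterates \cref{thm:elliptic-ultradifferentiable} (applied with $P = \LaplP$) membership and boundedness in $\BUDiff(\LieG;\Vecs)$ are governed by the iterates $\LaplP^{\,j}$.

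Combining these two ingredients is the \emph{technical crux} and the first main step: to prove a two-sided Fourier characterization stating that, for $p \in \csn(\Vecs)$, a vector $v$ lies in $\BUDiffV$ if and only if the isotypic norms $p(P_\IrredRep v)$ decay like the Fourier coefficients of a class-$[\Weight]$ function in the eigenvalue $1+\IrredRepE$, and that $\mathcal B \subseteq \BUDiffV$ is bounded if and only if this decay is uniform over $\mathcal B$. Concretely, for $v \in \BUDiffV$ one passes from the iterate bounds $\sup_j \IrredRepEP^{\,j}\, p(P_\IrredRep v)\exp(-\tfrac1h\WeightYoung(2hj)) < \infty$ to an exponential decay of $p(P_\IrredRep v)$ by taking the infimum over $j$ and invoking the Young-conjugate estimate \eqref{equ:estimate-weight-young}, which (up to the degree factor) converts $\sup_{j}[\, j\log t - \tfrac1{h'}\WeightYoung(h'j)\,]$ into $\tfrac1h\Weight(t)$ with $t = \sqrt{1+\IrredRepE}$; the converse uses Sobolev summation of the Fourier series. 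Here condition $(\gamma)$, $\log t = o(\Weight(t))$, is essential: it absorbs the polynomially growing factors $\IrredRepD$ and the powers of $1+\IrredRepE$ coming from Sobolev embedding into the exponential weight. In the Beurling case this decay is super-exponential relative to $\Weight(\sqrt{1+\IrredRepE})$ (the rate exponent divided by the weight tends to $\infty$), while in the Roumieu case, by \cref{thm:inductive-description-roumieu}, a bounded set obeys a single uniform rate; all the vector-valued bookkeeping is carried out inside a fixed Banach space $\Vecs_B$, $B \in \Bnds$, so that the relevant Fourier series converge by sequential completeness.

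The second step is the construction of the divisor. Given a bounded $\mathcal B \subseteq \BUDiffV$, fix $B \in \Bnds$ realizing its boundedness and let $M(\IrredRep) := \sup_{v \in \mathcal B}\norm{P_\IrredRep v}^{B}$ be the uniform majorant, which decays ultradifferentiably in $1+\IrredRepE$ by Step~1. I would then choose positive central Fourier coefficients $a_\IrredRep$ taking \emph{half} of this decay: in the Beurling case $a_\IrredRep = \exp(-\rho(\IrredRep)/2)$, where $e^{-\rho(\IrredRep)}$ majorizes $M(\IrredRep)$ with $\rho(\IrredRep)/\Weight(\sqrt{1+\IrredRepE}) \to \infty$, and in the Roumieu case $a_\IrredRep = \exp(-\tfrac{1}{2h_0}\Weight(\sqrt{1+\IrredRepE}))$ for the common rate $h_0$ of $\mathcal B$. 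This single choice simultaneously makes $\Molf := \sum_{\IrredRepC} a_\IrredRep\, e_\IrredRep$ lie in $\UDiff(\LieG)$ and keeps the quotients $a_\IrredRep^{-1}\norm{P_\IrredRep v}^{B}$ decaying at the remaining half rate, uniformly over $\mathcal B$; this is exactly what fails for a rate matched to $M$ itself but becomes automatic once the exponent is split. Setting $v' := \sum_{\IrredRepC} a_\IrredRep^{-1} P_\IrredRep v$, the series converges in $\Vecs_B$, the family $\{v' : v \in \mathcal B\}$ is bounded in $\BUDiffV$ by Step~1, and since $\Action(\Molf) = \sum_{\IrredRepC} a_\IrredRep P_\IrredRep$ as operators on $\BUDiffV$ (using $P_\IrredRep P_\eta = \delta_{\IrredRep\eta}P_\IrredRep$ and \eqref{equ:algebra-representation}), one gets $\Action(\Molf)v' = \sum_{\IrredRepC} a_\IrredRep a_\IrredRep^{-1}P_\IrredRep v = v$. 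Thus $\mathcal B = \Action(\Molf)\{v' : v \in \mathcal B\}$, which is the bounded strong factorization property, and the identity $\BUDiffV = \Action(\UDiff(\LieG))\BUDiffV$ follows by applying this to singletons.

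The hard part will be Step~1: making the passage between the iterate (Laplacian) description and the clean exponential decay of the isotypic norms fully rigorous and \emph{uniform in the bornological sense}, including the conversion between $L^2$, sup, and $\Vecs_B$-norms on $\LieG$, the treatment of the operator-valued matrix-coefficient data with their multiplicities, and the verification that deconvolution by a central multiplier preserves orbits, so that the series for $v'$ genuinely defines a vector in $\BUDiffV$ rather than merely a function in $\BUDiff(\LieG;\Vecs)$. For this last point one may route everything through the left regular representation of \cref{exa:regular-representation} and the relation $\check{\gamma}_{\Action(\Molf)v} = \Molf \ast \check{\gamma}_v$ from \cref{thm:smoothening}. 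Once the Fourier dictionary of Step~1 is in place, the division in Step~2 is essentially forced by the halving device.
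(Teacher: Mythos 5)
Your proposal follows essentially the paper's own route: reduce factorization via Peter--Weyl to a division problem on group Fourier coefficients, divide by a \emph{central} multiplier whose coefficients carry only a fraction of the available decay, and check that the divided data reassembles into an orbit. Your ``halving device'' is exactly the paper's choice in \cref{thm:factorization-functions-roumieu}, where one takes $C_{\IrredRep}^{-1}=e^{-\frac{1}{h'}\Weight(\sqrt{\IrredRepE})}$ with $h'>h$, and your multiplier $\Molf=\sum_{\IrredRep}a_{\IrredRep}\,\IrredRepD\,\overline{\Trace\IrredRep}$ is literally the paper's $g=\FTO^{-1}\bigl((C_{\IrredRep}^{-1}\Id_{\IrredRepS})_{\IrredRep}\bigr)$. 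Two substantive comparisons. First, you handle the Beurling case directly by splitting the super-exponential majorant of the bounded set, whereas the paper reduces Beurling to Roumieu through a larger weight function (\cref{thm:beurling-to-roumieu}, resting on the Braun--Meise--Taylor-type construction of \cref{thm:bigger-weight-function}); your direct route is viable (with minor care when the majorant vanishes) and trades the weight-function construction for a per-bounded-set majorant, while the paper's route yields one unified division lemma. Second --- the one place your sketch genuinely needs repair --- your Step 1 dictionary is phrased in terms of the pointwise norms $p(P_{\IrredRep}v)$ of the isotypic projections. The direction ``$v$ ultradifferentiable $\Rightarrow$ decay'' is fine, but the reconstruction direction fails as stated: since $\Rep$ is assumed neither continuous nor equicontinuous, the value $\norm{P_{\IrredRep}v}^{B}$ at the identity does not control the orbit sup $\sup_{x}\norm{\Rep(x)P_{\IrredRep}v}^{B}$, and it is the latter that you must sum to conclude that $\gamma_{v'}=\sum_{\IrredRep}a_{\IrredRep}^{-1}\gamma_{P_{\IrredRep}v}$ lies (boundedly) in $\BUDiff(\LieG;\Vecs)$. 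This is precisely why the paper formulates the dictionary for the operator-valued Fourier coefficients $\FT{\Orbit{v}}(\IrredRep)=\int_{\LieG}\Orbit{v}(x)\otimes\IrredRep(x)\,\mathrm{d}x$ of the orbit (\cref{thm:characterization-coefficients-smooth,thm:characterization-coefficients-ultradifferentiable,thm:characterization-coefficients-bornological}), whose Hilbert--Schmidt seminorms integrate over the group. Replacing your majorant $M(\IrredRep)$ by $\sup_{v\in\mathcal{B}}\sup_{x}\norm{\Rep(x)P_{\IrredRep}v}^{B}$ (equivalently, up to powers of $1+\IrredRepE$ absorbed by condition $(\gamma)$, by $\sup_{v}\HSnorm{\FT{\Orbit{v}}(\IrredRep)}^{B}$) repairs Step 2 verbatim; this is the fix you yourself anticipate in proposing to route everything through the left regular representation and the relation $\check{\gamma}_{\Action(\Molf)v}=\Molf\ast\check{\gamma}_{v}$. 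A last small point: your appeal to \cref{thm:inductive-description-roumieu} for the ``single uniform rate'' of a Roumieu-bounded set concerns function spaces on open sets; for the discrete space this regularity is not automatic from the projective definition of $\RDiffC(\DLieG;\Vecs)$ and is exactly the content of the paper's \cref{thm:roumieu-inductive}.
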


\begin{remark} \cref{th analytic} follows at once from \cref{thm:factorization-vectors} in the Roumieu case with \(\Weight = \Weight_1\) and \(\Vecs\) a Fréchet space (cf.\ \cref{rem:analytic-vectors-coincide}).
\end{remark}

We postpone the proof of \cref{thm:factorization-vectors} to \cref{sec:main-proof}, but we shall now discuss some of its corollaries.
Firstly, we find the following result (see \cref{exa:regular-representation}) by considering the left regular representation \(\LRep\) on \(\Cont(\LieG;\Vecs)\).
\begin{corollary}
    \label{thm:factorization-functions}
    The space \(\BUDiff(\LieG;\Vecs)\) has the bounded strong factorization property as a left module over \(\UDiff(\LieG)\).
    In particular,
    \[
        \BUDiff(\LieG;\Vecs)
        = \UDiff(\LieG) \ast \BUDiff(\LieG;\Vecs)
        .
    \]
\end{corollary}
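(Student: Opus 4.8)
The plan is to obtain this as a direct consequence of \cref{thm:factorization-vectors} applied to the left regular representation, as indicated in the sentence preceding the statement. Write $W = \Cont(\LieG;\Vecs)$, equipped with the topology of uniform convergence generated by the seminorms $f \mapsto \sup_{x \in \LieG} p(f(x))$, $p \in \csn(\Vecs)$. Since $\LieG$ is compact and $\Vecs$ is sequentially complete, $W$ is again a sequentially complete lcHs, hence an admissible space for \cref{thm:factorization-vectors}. First I would record, as in \cref{exa:regular-representation}, that the left regular representation $\LRep$ on $W$ is continuous, that its induced action is convolution, $\Action_L(\Molf)f = \Molf \ast f$ for $\Molf \in \Cont(\LieG)$ and $f \in W$, and that its space of ultradifferentiable vectors of class $[\Weight]$ coincides, as a bornological space, with $\BUDiff(\LieG;\Vecs)$.

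With these identifications at hand, the deduction is immediate. \Cref{thm:factorization-vectors}, applied to $\LRep$ on $W$, asserts that the space of ultradifferentiable vectors of $\LRep$ has the bounded strong factorization property over $\UDiff(\LieG)$, the module action being $\Molf \cdot f = \Action_L(\Molf)f = \Molf \ast f$. Transporting this statement through the identification of \cref{exa:regular-representation}, it reads exactly as follows: for every bounded $\tilde B \subseteq \BUDiff(\LieG;\Vecs)$ there are $\Molf \in \UDiff(\LieG)$ and a bounded $\tilde B' \subseteq \BUDiff(\LieG;\Vecs)$ such that $\tilde B = \Molf \ast \tilde B'$. This is the bounded strong factorization property; applying it to singletons $\tilde B = \{f\}$ yields the displayed identity $\BUDiff(\LieG;\Vecs) = \UDiff(\LieG) \ast \BUDiff(\LieG;\Vecs)$.

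Thus the entire content of the corollary is carried by the bornological identification asserted in \cref{exa:regular-representation}, and this is the one step I expect to require genuine care. For $f \in W$ the orbit is $\gamma_f(x) = L_x f$, which as a $W$-valued function on $\LieG$ encodes the two-variable map $(x,y) \mapsto f(x^{-1} y)$. To see that $\gamma_f \in \BUDiff(\LieG;W)$ forces $f \in \BUDiff(\LieG;\Vecs)$, I would compose the orbit with the continuous linear evaluation $g \mapsto g(\Origin)$ on $W$, giving $x \mapsto f(x^{-1}) \in \BUDiff(\LieG;\Vecs)$, and then compose with the real analytic inversion $x \mapsto x^{-1}$ by means of \cref{thm:analytic-composition}. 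For the converse, and in particular for the matching of bounded sets, the convenient tool is the theorem of iterates: by \cref{thm:elliptic-ultradifferentiable} the classes $\BUDiff$ may be described (over each Banach space $\Vecs_B$, resp.\ $W_B$) through the powers of the left-invariant analytic elliptic Laplace-Beltrami operator on $\LieG$, and left invariance converts the iterate estimates for the single-variable $f$ into the corresponding joint estimates for $(x,y) \mapsto f(x^{-1} y)$, uniformly over bounded families. Carrying out this bookkeeping completes the proof.
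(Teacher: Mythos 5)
Your proposal is correct and follows exactly the paper's route: the corollary is obtained by applying \cref{thm:factorization-vectors} to the left regular representation \(\LRep\) on \(\Cont(\LieG;\Vecs)\) together with the identifications recorded in \cref{exa:regular-representation} (the ultradifferentiable vectors of \(\LRep\) form \(\BUDiff(\LieG;\Vecs)\) as a bornological space, and the induced action is convolution). The only difference is one of detail: the paper states the key identification in \cref{exa:regular-representation} without proof, whereas you additionally sketch a sound argument for it (evaluation at \(\Origin\) composed with analytic inversion via \cref{thm:analytic-composition} in one direction, and invariance of the Laplace--Beltrami operator together with the theorem of iterates for the converse and for the matching of bounded sets).
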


\Cref{thm:factorization-vectors} can also be used to establish the strong factorization property for spaces \(\SmoothV\) of smooth vectors.
In the particular case of a Fréchet space \(\Vecs\), one recovers the strong factorization result for smooth vectors on compact Lie groups of Dixmier and Malliavin \cite[Théorème 4.11]{D-M}.
Notice our proof is completely different from theirs.

\begin{corollary}\label{thm:factorization-vectors-smooth}
    Let \(\Rep\) be a representation of \(\LieG\) acting on \(\Vecs\).
    Then, \(\SmoothV\) has the bounded strong factorization property as a left module over \(C^{\infty}(G)\). 
    In particular,
    \[
        \SmoothV
        = \Action(\Smooth(\LieG)) \SmoothV
        .
    \]
\end{corollary}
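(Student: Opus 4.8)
The plan is to deduce \cref{thm:factorization-vectors-smooth} from the Roumieu instance of \cref{thm:factorization-vectors} by exploiting the fact that \emph{the weight is ours to choose}. Fix a bounded set \(\mathcal{B} \subseteq \SmoothV\). I will produce a single weight function \(\Weight\) (depending on \(\mathcal{B}\)) and an \(h > 0\) for which the orbits \(\{\Orbit{v} \mid v \in \mathcal{B}\}\) are bounded in the Roumieu space \(\BRDiff(\LieG;\Vecs)\); by definition of the bornology this means that \(\mathcal{B}\) is bounded in \(\BRDiffV\). Granting this, \cref{thm:factorization-vectors} applied in the Roumieu case furnishes \(\chi \in \RDiff(\LieG)\) and a bounded set \(\mathcal{B}' \subseteq \BRDiffV\) with \(\mathcal{B} = \Action(\chi)\mathcal{B}'\). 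Since \(\RDiff(\LieG) \subseteq \Smooth(\LieG)\) and the inclusion \(\BRDiffV \hookrightarrow \SmoothV\) is bounded (a bornologically Roumieu orbit is bornologically smooth, because \(\RDiff(\LieG;\Vecs_B) \hookrightarrow \Smooth(\LieG;\Vecs_B)\) continuously for each \(B \in \Bnds\)), we conclude that \(\chi\) is smooth and \(\mathcal{B}'\) is bounded in \(\SmoothV\). This is exactly the bounded strong factorization over \(\Smooth(\LieG)\); specializing to \(\mathcal{B} = \{v\}\) gives the displayed identity.

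To carry out the capture step, let \(\Lapl\) be the Laplace--Beltrami operator of a bi-invariant analytic Riemannian metric on \(\LieG\), an elliptic analytic differential operator of degree \(\nu = 2\). As \(\mathcal{B}\) is bounded in \(\SmoothV\), there is \(B \in \Bnds\) such that the orbits are bounded in \(\Smooth(\LieG;\Vecs_B)\), with \(\Vecs_B\) a Banach space. Applying \cref{thm:elliptic-smooth} to \((\Vecs_B,\norm{\Cdot}^{B})\) with \(K = \LieG\), the numbers
\[
    M_j := \sup_{v \in \mathcal{B}} \sup_{x \in \LieG} \norm{\Lapl^j \Orbit{v}(x)}^{B}, \qquad j \in \N,
\]
are all finite. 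It then suffices to find \(\Weight\) and \(h, C > 0\) with \(M_j \leq C \exp\!\big(\tfrac{1}{h}\WeightYoung(2hj)\big)\) for every \(j\): this says precisely that \(\sup_{v \in \mathcal{B}} \sup_{j \in \N}\sup_{x \in \LieG}\norm{\Lapl^j \Orbit{v}(x)}^{B}\exp\!\big(-\tfrac{1}{h}\WeightYoung(2hj)\big) \leq C\), so the orbits are bounded in \(\RDiff_{\Lapl}(\LieG;\Vecs_B) = \RDiff(\LieG;\Vecs_B)\) by \cref{thm:elliptic-ultradifferentiable}, and hence \(\mathcal{B}\) is bounded in \(\BRDiffV\) (via the Roumieu description of \cref{thm:inductive-description-roumieu} transported through the iterate identification).

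The construction of \(\Weight\) is the crux, and the main obstacle. Since \(\WeightYoung\) is the Young conjugate of \(\WeightExp = \Weight \circ \exp\), forcing \(j \mapsto \exp(\tfrac1h\WeightYoung(2hj))\) to grow rapidly amounts to choosing \(\Weight\) that increases slowly, yet still faster than \(\log\) as demanded by \((\gamma)\); as \(\Weight\) is pushed toward \(\log\), the conjugate \(\WeightYoung\) attains arbitrarily fast (e.g.\ iterated-exponential) growth, so that any finite sequence \((M_j)\) can be dominated. This is the weight-function incarnation of the classical fact that \(\Smooth = \bigcup_{\Weight}\RDiff\), i.e.\ that the derivative sequence of a smooth function lies in some Denjoy--Carleman class. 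Concretely, I would take an increasing convex majorant \(N\) of \(j \mapsto \log M_j\) that also dominates \(t\log t\) and satisfies \(N(t)/t \to \infty\), set \(\WeightYoung := N\) (extended convexly to \(\rinterval{0}{\infty}\) with \(\WeightYoung(0) = 0\)), define \(\WeightExp := (\WeightYoung)^{\ast}\) and \(\Weight(t) := \WeightExp(\log t)\), and then verify the weight axioms: \((\delta)\) is automatic since \(\WeightExp\) is a conjugate, \((\gamma)\) holds because \(\WeightYoung\) is finite-valued (equivalently \(\WeightExp\) is superlinear), and \((\beta)\) is guaranteed by \(N(t) \geq t\log t\). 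The genuinely delicate point is the regularity condition \((\alpha)\), \(\Weight(2t) = O(\Weight(t))\), which must be arranged by smoothing \(N\) appropriately. Isolating this as a standalone lemma — \emph{every finite sequence is dominated by the Young conjugate of some Braun--Meise--Taylor weight} — is where the real work lies; once it is available, the factorization follows mechanically from \cref{thm:factorization-vectors}.
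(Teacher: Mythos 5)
Your top-level strategy coincides with the paper's: capture the bounded set \(\mathcal{B}\subseteq\SmoothV\) inside \(\BRDiffV[\Weight*]\) for some weight \(\Weight*\) manufactured from \(\mathcal{B}\), apply \cref{thm:factorization-vectors} in the Roumieu case, and come back down via the inclusions \(\RDiff[\Weight*](\LieG)\subseteq\Smooth(\LieG)\) and \(\BRDiffV[\Weight*]\hookrightarrow\SmoothV\). The paper does exactly this; its \cref{thm:smooth-to-roumieu} is precisely your capture step. The difference lies in how the capture step is proved: the paper transfers to Fourier coefficients (\cref{thm:characterization-coefficients-smooth,thm:characterization-coefficients-bornological}), where smoothness and Roumieu membership both become pointwise decay conditions in \(\IrredRep\), and the weight is then handed over by \cref{rem:smooth-weight}; you instead work with Laplacian iterates \(M_j\) and try to build the weight from the Young-conjugate side.

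That is where your proposal has a genuine, self-acknowledged gap: the lemma that for any sequence \((M_j)\) there is a weight \(\sigma\) and \(h>0\) with \(M_j\leq C\exp(\tfrac1h\varphi^{\ast}_{\sigma}(2hj))\) is asserted, not proved, and condition \((\alpha)\) really is an obstruction. Setting \(\WeightYoung=N\) for a convex interpolant \(N\) of \(\log M_j\) does \emph{not} in general yield a weight: there exist convex increasing \(\varphi\) with \(\varphi(u)\leq Ce^{u}\) (so \((\beta)\), \((\gamma)\), \((\delta)\) hold) whose ratios \(\varphi(u+\log 2)/\varphi(u)\) are unbounded — take \(\varphi\) piecewise linear with slope jumping to \(e^{u_k}\) at widely spaced nodes \(u_k\) — and every such \(\varphi\) is the conjugate of its own conjugate, so your recipe can output exactly these non-weights. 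The repair is to dualize rather than prescribe the conjugate: choose a finite, convex, increasing, superlinear \(A\) with \(A(2j)\geq\log M_j\) (convex interpolation plus \(s^2\), say), and set \(g(t)=A^{\ast}(\log t)\) for \(t\geq 1\), \(g\equiv 0\) on \(\interval{0}{1}\). Since \(A\) is finite-valued, \(A^{\ast}\) is superlinear, so \(\log(1+t)=o(g(t))\), and \cref{rem:smooth-weight} produces a weight \(\sigma\) with \(\sigma(t)=o(g(t))\); then \(\varphi_{\sigma}\leq A^{\ast}+C'\), hence by conjugation \(\varphi^{\ast}_{\sigma}(2j)\geq A^{\ast\ast}(2j)-C'=A(2j)-C'\geq\log M_j-C'\), which is what you need. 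In other words, the missing lemma is true, but the \((\alpha)\)-regularity must come from a Braun--Meise--Taylor \cite[Lemma 1.6]{BMT}-type construction (the paper's \cref{rem:smooth-weight}) applied \emph{below} a given function, not from smoothing the conjugate directly; this is the same tool the paper uses in its own proof of \cref{thm:smooth-to-roumieu}. A second, smaller point you gloss over: the single estimate \(M_j\leq C\exp(\tfrac1h\varphi^{\ast}_{\sigma}(2hj))\) must be upgraded to boundedness in the projective-limit space \(\RDiff_{\LaplP*}(\LieG;\Vecs_B)\); this follows from the standard fact that \(\Weight*=o(\sigma)\) implies \(\tfrac{1}{h'}\varphi^{\ast}_{\Weight*}(2h'j)\geq\tfrac1h\varphi^{\ast}_{\sigma}(2hj)-C\) for every \(h'>0\), but it should be stated. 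With these two repairs your argument becomes a correct, iterate-side reproof of \cref{thm:smooth-to-roumieu}, and the rest of your deduction is sound.
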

\begin{proof} 
    This follows from \cref{thm:factorization-vectors} and \cref{thm:smooth-to-roumieu} (shown below).
\end{proof}

Specializing \cref{thm:factorization-vectors-smooth} to the left regular representation \(\LRep\) on \(\Cont(\LieG;\Vecs)\), we obtain (see \cref{exa:regular-representation}):

\begin{corollary}\label{thm:factorization-functions-smooth}
    The space \(\BSmooth(\LieG;\Vecs)\) has the bounded strong factorization property as a left module over \(C^{\infty}(\LieG)\).
    In particular,
    \[
        \BSmooth(\LieG;\Vecs)
        = \Smooth(\LieG) \ast \BSmooth(\LieG;\Vecs)
        .
    \]
\end{corollary}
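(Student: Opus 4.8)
The plan is to deduce this corollary from Corollary~\ref{thm:factorization-vectors-smooth} by specializing the latter to the left regular representation \(\LRep\) on \(W := \Cont(\LieG;\Vecs)\), in exact parallel to how Corollary~\ref{thm:factorization-functions} was obtained from Theorem~\ref{thm:factorization-vectors}. Since \(\LieG\) is compact and \(\Vecs\) is sequentially complete, the space \(W\) (endowed with the topology of uniform convergence) is again a sequentially complete lcHs, so Corollary~\ref{thm:factorization-vectors-smooth} is applicable with \(\Vecs\) replaced by \(W\).

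First I would recall from Example~\ref{exa:regular-representation} the three facts that make the specialization work: the representation \(\LRep\) is continuous; its space of smooth vectors \(W^\infty\) is precisely \(\BSmooth(\LieG;\Vecs)\), with the orbit bornology on \(W^\infty\) coinciding with the intrinsic bornology of \(\BSmooth(\LieG;\Vecs)\); and the induced action is convolution, \(\Action_L(\Molf)f = \Molf \ast f\) for \(\Molf \in \Cont(\LieG)\) and \(f \in W\). Granting these, Corollary~\ref{thm:factorization-vectors-smooth} applied to \(\Rep = \LRep\) asserts that \(W^\infty\) has the bounded strong factorization property as a left module over \(\Smooth(\LieG)\).

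It then only remains to transcribe this statement through the identifications above. Given a bounded set \(\mathcal{B} \subseteq \BSmooth(\LieG;\Vecs) = W^\infty\), the bounded strong factorization for \(\LRep\) furnishes \(\Molf \in \Smooth(\LieG)\) and a bounded set \(\mathcal{B}' \subseteq W^\infty = \BSmooth(\LieG;\Vecs)\) with \(\mathcal{B} = \Action_L(\Molf)\mathcal{B}' = \Molf \ast \mathcal{B}'\); taking \(\mathcal{B}\) to be an arbitrary singleton yields the displayed identity \(\BSmooth(\LieG;\Vecs) = \Smooth(\LieG) \ast \BSmooth(\LieG;\Vecs)\). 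The only nontrivial ingredient is the bornological identification \(W^\infty = \BSmooth(\LieG;\Vecs)\) from Example~\ref{exa:regular-representation} --- and in particular the matching of bornologies, which is exactly what is needed for the word ``bounded'' to carry over --- so once that example is in hand, the corollary requires no further argument.
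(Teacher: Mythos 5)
Your proposal is correct and coincides with the paper's own argument: the corollary is obtained exactly by specializing \cref{thm:factorization-vectors-smooth} to the left regular representation \(\LRep\) on \(\Cont(\LieG;\Vecs)\) and invoking the identifications of \cref{exa:regular-representation} (smooth vectors, orbit bornology, and action by convolution). No further comparison is needed.
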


\begin{remark}
    Suppose that \(\Vecs\) is a Fréchet space.
    In \cite[Théorèmes 4.9 and 4.11]{D-M}, Dixmier and Malliavin showed \cref{thm:factorization-vectors-smooth,thm:factorization-functions-smooth} but with \(\tilde{E}^{\infty} = \{v \in \Vecs \mid \Orbit{v} \in \Smooth(\LieG;\Vecs)\}\) and \(\Smooth(\LieG;\Vecs)\) instead of \(\SmoothV\) and \(\Smooth_b(\LieG;\Vecs)\), respectively.
    However, since \(\Vecs\) is a Fréchet space, one has \(\Smooth(\LieG;\Vecs) = \BSmooth(\LieG;\Vecs)\) (cf.\ \cite[Lemma 2.7]{JH}) and thus also \(\tilde{E}^{\infty} = \SmoothV\).
\end{remark}

\begin{remark}
    One might wonder whether the spaces
    \[
        \tilde{E}^{\infty} = \{v \in \Vecs \mid \Orbit{v} \in \Smooth(\LieG;\Vecs)\} 
        \qquad \text{and} \qquad
        \tilde{E}^{[\Weight]} = \{v \in \Vecs \mid \Orbit{v} \in \UDiff(\LieG;\Vecs)\}
    \]
    have the strong factorization property for a general \(\Vecs\).
    By applying the mean value theorem in local coordinates, it follows that for any \(f \in \Smooth(\LieG;\Vecs)\) there exists some \(B \in \Bnds\) such that \(f \in \Cont(\LieG;\Vecs_B)\) (cf.\ \cite[Lemma 1]{BJM}).
    By \eqref{equ:smoothening-smooth} and \eqref{equ:smoothening-ultradifferentiable}, this yields that
    \[
        \Action(\Smooth(\LieG))\tilde{E}^{\infty} \subseteq \BSmoothV \subseteq \tilde{E}^{\infty} 
        \qquad \text{and} \qquad
        \Action(\UDiff(\LieG))\tilde{E}^{[\Weight]} \subseteq \BUDiffV \subseteq \tilde{E}^{[\Weight]}
        .
    \]
    Hence, in general, one can only hope for strong factorization for \(\SmoothV\) \((\UDiffV)\).
    Moreover, if \(\tilde{E}^{\infty}\) (\(\tilde{E}^{[\Weight]}\)) has the strong factorization property, then we must necessarily have \(\BSmoothV = \tilde{E}^{\infty}\) (\(\BUDiffV = \tilde{E}^{[\Weight]}\)).
\end{remark}

\section{Ultradifferentiability via the Laplace-Beltrami operator}
\label{sec:laplacian}
Let \(\Lapl\) be the Laplace-Beltrami operator on \(\LieG\) (with respect to a fixed bi-invariant Riemannian metric on \(\LieG\)).
Then, \(\Lapl\) is a second-order elliptic analytic linear differential operator on \(\LieG\) that is bi-invariant, negative semi-definite and formally self-adjoint (see e.g.\ \cite[p.~35]{Stein70}).
The spectrum of \(-\Lapl\) solely consists of an unbounded sequence of eigenvalues (see e.g.\ \cite{TrevesBook1980II}).

We now use the results from \cref{ssec:theorem-of-iterates} to describe the spaces \(\Smooth(\LieG;\Vecs)\) and \(\UDiff(\LieG;\Vecs)\) in terms of the Laplace-Beltrami operator \(\Lapl\).
Let \(f \in \Smooth(\LieG;\Vecs)\), \(p \in \csn(\Vecs)\), \(j \in \N\), and \(h>0\).
To ease our notation, we write \(\psupnorm(f) = \sup_{x \in \LieG} p(f(x))\) and abbreviate the norms used in \cref{ssec:theorem-of-iterates} by
\begin{alignat*}{3}
    p_{j}(f) &\coloneqq p_{\LaplP*,G,j}(f) &&= \max_{i \leq j} \psupnorm(\LaplP^{i}f), \\
    p_{\Weight,h}(f) &\coloneqq p_{\LaplP*,G,\Weight,h}(f) &&= \sup_{j \in \N} \psupnorm(\LaplP^{j}f)\exp \left(-\frac{1}{h}\WeightYoung(2jh)\right) 
    ,
\end{alignat*}
where the factor $2$ in the defintion of $ p_{\Weight,h}$ stems from the fact that the degree of the Laplacian is equal to 2.
\Cref{thm:elliptic-smooth,thm:elliptic-ultradifferentiable} yield the following results:
\begin{proposition}
    \label{thm:smoothness-laplacian}
    Let \(\LieG\) be a compact Lie group and let \(\Vecs\) be a lcHs.
    The system of seminorms \(\{p_{j} \mid p \in \csn(\Vecs), j \in \N\}\) induces the topology on \(\Smooth(\LieG;\Vecs)\).
\end{proposition}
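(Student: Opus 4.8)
The plan is to obtain this as a direct consequence of \cref{thm:elliptic-smooth} applied to the specific operator $P = \LaplP*$, together with the compactness of $\LieG$. First I would check that $\LaplP*$ satisfies the hypotheses of \cref{thm:elliptic-smooth}, i.e.\ that it is an elliptic smooth linear differential operator on $\LieG$. The operator $\Lapl$ is a second-order elliptic analytic operator on $\LieG$, and adding the zeroth-order term $\Id$ does not alter its principal symbol; hence $\LaplP*$ is again elliptic (and still analytic, so in particular smooth). Thus $P = \LaplP*$ meets the assumptions of \cref{thm:elliptic-smooth} with $\Mnf = \LieG$.

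Applying \cref{thm:elliptic-smooth} with this choice of $P$, the topology of $\Smooth(\LieG;\Vecs)$ is generated by the family $\{ p_{P,K,j} \mid K \subseteq \LieG \text{ compact},\ j \in \N,\ p \in \csn(\Vecs) \}$. The remaining step is to trim this family down to the one in the statement. Here I would use that $\LieG$ is compact: the whole group $\LieG$ is itself a compact subset containing every compact $K \subseteq \LieG$, so $p_{P,K,j}(f) \leq p_{P,\LieG,j}(f)$ for all such $K$. Consequently the subfamily obtained by fixing $K = \LieG$ already dominates the full family and hence generates the same topology. By the definitions preceding the proposition, $p_{P,\LieG,j} = p_{\LaplP*,\LieG,j} = p_j$, so the system $\{ p_j \mid p \in \csn(\Vecs),\ j \in \N \}$ induces the topology of $\Smooth(\LieG;\Vecs)$, which is exactly the claim.

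Since the argument is essentially a verification of hypotheses followed by a compactness reduction, I do not expect a genuine obstacle. The only point requiring a moment's care is confirming ellipticity of $\LaplP*$ rather than of $\Lapl$ alone; this is immediate, as a lower-order perturbation leaves the principal symbol—and therefore ellipticity—unchanged.
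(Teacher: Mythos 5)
Your proposal is correct and matches the paper's intent exactly: the paper derives \cref{thm:smoothness-laplacian} in one line from \cref{thm:elliptic-smooth} applied to \(P = \LaplP*\), and your two verification steps (ellipticity of \(\LaplP*\) as a zeroth-order perturbation of \(\Lapl\), and the reduction to \(K = \LieG\) by compactness) are precisely the details the paper leaves implicit.
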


\begin{proposition}
    \label{thm:ultradifferentiability-laplacian}
    Let \(\LieG\) be a compact Lie group and let \(\Vecs\) be a lcHs. 
    Then $\UDiff(\LieG;\Vecs) =   \UDiff[\Weight]_{\LaplP*}(\LieG;\Vecs)$ as lcHs. 
\end{proposition}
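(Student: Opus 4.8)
The plan is to derive this as a direct corollary of \cref{thm:elliptic-ultradifferentiable}, whose substantive analytic content---elliptic regularity together with the theorem of iterates---already covers the general situation. The only thing left to check is that the specific operator appearing in the statement falls under its hypotheses.

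First I would verify that \(P \coloneqq \LaplP*\) is an elliptic analytic linear differential operator on \(\LieG\) of degree \(\nu = 2\). As recalled at the beginning of this section, \(\Lapl\) is a second-order elliptic analytic (bi-invariant) operator; since the identity operator has order \(0\), it does not alter the principal symbol, so \(P\) and \(-\Lapl\) share the same principal symbol and \(P\) inherits ellipticity. Analyticity is clear, as \(P\) is a sum of two analytic operators. Hence \(P\) satisfies all the requirements of \cref{thm:elliptic-ultradifferentiable}.

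Second, I would apply \cref{thm:elliptic-ultradifferentiable} with this \(P\), obtaining at once \(\UDiff(\LieG;\Vecs) = \UDiff_P(\LieG;\Vecs)\) as lcHs. By the definition of the iterate spaces in \cref{ssec:theorem-of-iterates}, the right-hand side is precisely \(\UDiff[\Weight]_{\LaplP*}(\LieG;\Vecs)\), which is the asserted identity. To reconcile this with the seminorms \(p_{\Weight,h}\) recorded just above, I would finally note that compactness of \(\LieG\) permits the choice \(K = \LieG\) in \(p_{P,K,\Weight,h}\), and that inserting \(\nu = 2\) reproduces exactly the factor \(2\) inside the argument of \(\WeightYoung\).

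I do not expect any real obstacle. The one point demanding (minimal) care is the observation that \(\Id - \Lapl\), and not merely \(\Lapl\), remains elliptic; this is immediate, since ellipticity depends only on the principal symbol, which \(\Id - \Lapl\) shares with \(-\Lapl\).
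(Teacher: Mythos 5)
Your proposal is correct and is exactly the paper's argument: the paper derives this proposition directly from \cref{thm:elliptic-smooth,thm:elliptic-ultradifferentiable}, implicitly using that \(\Id - \Lapl\) is an elliptic analytic operator of degree \(2\) (sharing its principal symbol with \(-\Lapl\)), which is precisely the verification you spell out. Nothing is missing.
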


In connection with Proposition \ref{thm:smoothness-laplacian}, the reader can also consult \cite{G-K}, where Sobolev norms for general Lie groups are studied with respect to Laplace elements of the universal enveloping algebra.

\section{Characterizing ultradifferentiable functions via their Fourier coefficients}
\label{sec:fourier-coefficients}
In this section, we will use the Peter-Weyl Theorem to characterize the spaces \(\UDiff(\LieG;\Vecs)\) in terms of the group Fourier transform.
This gives a discrete description of \(\UDiff(\LieG;\Vecs)\) and will be the main tool in the proof of our main result \cref{thm:factorization-vectors}, given in the next section.

We start by recalling the Peter-Weyl Theorem, see \cite[Sections 5.2 and 5.3]{Folland95} for more information.
Let \(\IrredRep \colon \LieG \to U(\IrredRepS)\) be a continuous irreducible unitary representation of \(\LieG\), where \(U(\IrredRepS)\) is the space of unitary operators on a  (complex) Hilbert space \(\IrredRepS\).
Since \(\LieG\) is compact, \(\IrredRepS\) is finite dimensional and we write \(\IrredRepD\) for its dimension.
Furthermore, as \(U(\IrredRepS)\) is a Lie group, the continuity of the map \(\IrredRep \colon \LieG \to U(\IrredRepS)\) implies that it is in fact smooth. 

Two irreducible unitary representations \(\IrredRep_1 \colon \LieG \to U( {\mathcal{H}}_{\xi_1}) \) 
and  \(\IrredRep_2 \colon \LieG \to U( {\mathcal{H}}_{\xi_2})\) 
of $G$ are called unitarily equivalent if there exists a unitary intertwining  operator for $\xi_1$ and $\xi_2$, i.e., a unitary operator $T: {\mathcal{H}}_{\xi_1} \to {\mathcal{H}}_{\xi_2} $
such that $T \circ (\xi_1(x)) = (\xi_2(x)) \circ T$ for all $x \in G$. This defines an equivalence relation on the space of all  irreducible unitary representations of $G$.
We denote by \(\DLieG\) the set of unitary equivalence classes \(\IrredRepC\) of continuous irreducible unitary representations \(\IrredRep \colon \LieG \to U(\IrredRepS)\).
The notation \(\IrredRep \in \DLieG\) is to be understood as \(\IrredRepC \in \DLieG\), where we fix a particular representative.

Pick \(\IrredRep \in \DLieG\) and fix an orthonormal basis \(\{e_{i} \mid 1 \leq i \leq \IrredRepD\}\) of \(\IrredRepS\).
We call the smooth functions \(\IrredRep_{i,j}(x) = (e_{j},\IrredRep(x)e_{i})_{\IrredRepS}\) 
\emph{matrix coefficients}.
Now, define \(\MatCoefS = \Span\{\IrredRep_{i,j} \mid 1 \leq i,j \leq \IrredRepD\}\).
The Peter-Weyl Theorem states that \(L^{2}(\LieG) = \bigoplus_{\IrredRep \in \DLieG} \MatCoefS\) and that, for all \(f \in L^{2}(\LieG)\),
\begin{equation}
    \label{equ:peter-weyl-matrix-coefficients}
    f
    = \sum_{\IrredRep \in \DLieG} \IrredRepD\sum_{i,j = 1}^{\IrredRepD} c_{i,j}^{\IrredRep}(f) \IrredRep_{i,j}
    , \qquad c_{i,j}^{\IrredRep}(f) = \int _{\LieG} f(x) \overline{\IrredRep_{i,j}(x)} \dl{x}.
\end{equation}
This series converges in \(L^{2}(\LieG)\).

The matrix coefficients associated with any \(\IrredRep \in \DLieG\) are eigenfunctions of \(\Lapl\) with a common eigenvalue (see e.g.\ \cite[Chapter 10]{Fegan91}), that is, there is \(\IrredRepE \geq 0\) (as \(\Lapl\) is negative semi-definite) such that
\begin{equation}
    \label{equ:matrix-coefficient-eigenfunction}
    \Lapl \IrredRep_{i,j} 
    = - \IrredRepE \IrredRep_{i,j} 
    ,\qquad \text{for all } i,j = 1, \ldots, \IrredRepD
    .
\end{equation}
Since the matrix coefficients form an orthogonal basis of eigenfunctions, these are all the eigenvalues of \(\Lapl\).
Let \(N(\lambda)\) be the eigenvalue counting function of \(-\Lapl\).
Weyl's law (see e.g. \cite[p.~624]{TrevesBook1980II}) then implies that 
\begin{equation}
    \label{equ:summability-eigenvalues}
    \sum_{\IrredRep \in \DLieG} \IrredRepD^{2} \IrredRepEP^{-\alpha}= \int_{0^{-}}^{\infty} (1+\lambda)^{-\alpha} \dl N(\lambda) < \infty 
\end{equation}
if and only if \(\alpha > \frac{\Dim}{2}\), with $n = \operatorname{dim}(G)$.
In what follows, we will employ \eqref{equ:summability-eigenvalues} with \(\alpha=n\).

We define the group Fourier transform of \(f \in L^2(G)\) as
\[
    \FTO(f) (\IrredRep)
    = \FT f (\IrredRep)
    = \Xi(f)= \int _{\LieG} f(x) \IrredRep(x) \dl{x}
    \in \mathcal{L}(\IrredRepS)
    , \qquad \IrredRep \in \DLieG,
\]
where \(\Xi\) denotes the induced action by the representation \(\IrredRep\) (which is in this case well-defined even for  \(f\in L^{1}(\LieG)\)). 
If we pick an orthonormal basis \(\{e_{i} \mid 1 \leq i \leq \IrredRepD\}\) of \(\IrredRepS\), then \(\FT f (\IrredRep)\) is given by the matrix \((c_{j,i}^{\IrredRep}(f))_{i,j =1, \ldots,\IrredRepD}\).
Hence, we can rewrite \eqref{equ:peter-weyl-matrix-coefficients} as a Fourier inversion formula
\begin{equation}
    \label{equ:fourier-inversion}
    f(x) 
    = \sum_{\IrredRep \in \DLieG} \IrredRepD \Trace [\IrredRep^{\ast}(x) \circ \FT f (\IrredRep)] 
    .
\end{equation} 
Furthermore, the following Parseval equality holds
\[
    \norm{f}_{2}^{2} 
    = \sum_{\IrredRep \in \DLieG} \IrredRepD \HSnorm{\FT f (\IrredRep)}^{2}
    ,
\]
where \(\HSnorm{T} = \sqrt{\Trace(T \circ T^{*})}\) is the Hilbert-Schmidt norm of \(T \in \mathcal{L}(\IrredRepS)\).

Define
\[
    \ell_{2}(\DLieG)
    \coloneqq \{T = (T_{\IrredRep})_{\IrredRep \in \DLieG} \in \prod_{\IrredRep \in \DLieG} \mathcal{L}(\IrredRepS) \mid \norm{T}_{2}^{2} = \sum_{\IrredRep \in \DLieG} \IrredRepD \HSnorm{T_{\IrredRep}}^{2} < \infty\}
    .
\]
Then \(\FTO \colon L^{2}(\LieG) \to \ell_{2}(\DLieG)\) is an isometric isomorphism and, because of \eqref{equ:fourier-inversion}, its inverse is given by
\[
    \FTO^{-1}(T)(x) 
    = \sum_{\IrredRep \in \DLieG} \IrredRepD \Trace [\IrredRep^{\ast}(x) \circ T_{\IrredRep} ]
    , \qquad T = (T_{\IrredRep})_{\IrredRep \in \DLieG} \in \ell_{2}(\DLieG)
    .
\]

Let us now adapt the group Fourier transform to the vector-valued setting.
Fix a sequentially complete lcHs \(E\).
For \(f \in \Cont(\LieG;\Vecs)\), we define
\[
    \FTO(f)(\IrredRep)
    = \FT f (\IrredRep)
    \coloneqq \int _{\LieG} f(x) \otimes \IrredRep(x) \dl{x}
    \in \Vecs \otimes \mathcal{L}(\IrredRepS) 
    , \qquad \IrredRep \in \DLieG
    .
\]
Note that
\[
    \FT{ \Eval{v'}{f }}(\IrredRep)
    = \Eval{v'}{\FT f (\IrredRep)}
    , \qquad v' \in \Vecs'
    ,
\]
where we simply write \(\Eval{v'}{\cdot}\) for the map \(\Eval{v'}{\cdot} \otimes \Id_{\IrredRepS}\) on the right-hand side.
For \(p \in \csn(\Vecs)\), we define
\[
    \pHSnorm(T) 
    \coloneqq \sup_{v' \in V_{p}^{\circ}} \HSnorm{\Eval{v'}{T}}
    , \qquad T \in \Vecs \otimes \mathcal{L}(\IrredRepS)
    .
\]
We endow \(\Vecs \otimes \mathcal{L}(\IrredRepS)\) with the Hausdorff locally convex topology generated by the system 
of seminorms \( \{\pHSnorm \mid p \in \csn(\Vecs)\}\).

We now show two standard expected properties of the group Fourier transform in the vector-valued setting.
\begin{lemma}
    \label{thm:eigenvalues-laplacian}
    For all \(f \in \Smooth(\LieG;\Vecs)\),
    \[
        \FT{\Lapl f}(\IrredRep)
        = -\IrredRepE\FT{f}(\IrredRep)
        \qquad \IrredRep \in \DLieG
        .
    \]
\end{lemma}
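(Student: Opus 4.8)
The plan is to reduce the stated identity to the already-known scalar-valued case by testing against continuous linear functionals. First I would observe that the system of seminorms $\{\pHSnorm \mid p \in \csn(\Vecs)\}$ generating the topology of $\Vecs \otimes \mathcal{L}(\IrredRepS)$ is Hausdorff: if $\Eval{v'}{T} = 0$ for every $v' \in \Vecs'$, then $\pHSnorm(T) = \sup_{v' \in V_p^{\circ}}\HSnorm{\Eval{v'}{T}} = 0$ for all $p \in \csn(\Vecs)$, whence $T = 0$. Thus the maps $T \mapsto \Eval{v'}{T}$, $v' \in \Vecs'$, separate the points of $\Vecs \otimes \mathcal{L}(\IrredRepS)$, and it suffices to verify the asserted equality after applying $\Eval{v'}{\cdot}$ for an arbitrary $v' \in \Vecs'$.

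Next I would invoke the two compatibility identities already on record: the relation $\FT{\Eval{v'}{f}}(\IrredRep) = \Eval{v'}{\FT{f}(\IrredRep)}$ recorded just above, and the fact noted in the discussion preceding \cref{thm:weak-smooth} that $P\Eval{v'}{f} = \Eval{v'}{Pf}$ for every smooth linear differential operator $P$, in particular for the Laplace-Beltrami operator $\Lapl$. For $f \in \Smooth(\LieG;\Vecs)$ and $v' \in \Vecs'$ these combine to give
\[
\Eval{v'}{\FT{\Lapl f}(\IrredRep)} = \FT{\Eval{v'}{\Lapl f}}(\IrredRep) = \FT{\Lapl \Eval{v'}{f}}(\IrredRep),
\]
where $\Eval{v'}{f} \in \Smooth(\LieG)$ by \cref{thm:weak-smooth}. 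Consequently, the full vector-valued statement follows at once from the scalar identity $\FT{\Lapl g}(\IrredRep) = -\IrredRepE \FT{g}(\IrredRep)$ for all $g \in \Smooth(\LieG)$.

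For the scalar case I would argue entrywise. Fixing an orthonormal basis of $\IrredRepS$, the matrix of $\FT{g}(\IrredRep)$ consists of the Fourier coefficients $c_{i,j}^{\IrredRep}(g) = \int_\LieG g(x)\overline{\IrredRep_{i,j}(x)}\dl{x}$. Since $\LieG$ is compact without boundary and $\Lapl$ is formally self-adjoint, integration by parts produces no boundary terms and yields
\[
c_{i,j}^{\IrredRep}(\Lapl g) = \int_\LieG \Lapl g(x)\,\overline{\IrredRep_{i,j}(x)}\dl{x} = \int_\LieG g(x)\,\overline{\Lapl \IrredRep_{i,j}(x)}\dl{x}.
\]
The eigenfunction relation \eqref{equ:matrix-coefficient-eigenfunction}, together with $\IrredRepE \in \R$, gives $\overline{\Lapl \IrredRep_{i,j}} = -\IrredRepE\,\overline{\IrredRep_{i,j}}$, so that $c_{i,j}^{\IrredRep}(\Lapl g) = -\IrredRepE\, c_{i,j}^{\IrredRep}(g)$ for all $i,j$; that is, $\FT{\Lapl g}(\IrredRep) = -\IrredRepE \FT{g}(\IrredRep)$.

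I do not expect any serious obstacle: the substance is the classical eigenfunction property \eqref{equ:matrix-coefficient-eigenfunction} and the self-adjointness of $\Lapl$, while the vector-valued conclusion is obtained purely formally from the scalar one. The only points deserving a moment's care are the Hausdorff separation argument that licenses testing against $v' \in \Vecs'$, and the use of the reality of $\IrredRepE$ when commuting the Laplacian past the complex conjugate.
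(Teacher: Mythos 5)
Your proof is correct and takes essentially the same route as the paper's: the vector-valued identity is reduced to the scalar one by testing against functionals \(v' \in \Vecs'\) through the compatibility identities \(\Eval{v'}{\FT{\Lapl f}(\IrredRep)} = \FT{\Lapl \Eval{v'}{f}}(\IrredRep)\), and the scalar case rests on the eigenfunction relation \eqref{equ:matrix-coefficient-eigenfunction} together with the formal self-adjointness of \(\Lapl\). The only difference is one of detail: the paper states the scalar case in one line and leaves the point-separation argument implicit, both of which you spell out explicitly (your entrywise integration-by-parts computation is exactly what the paper's one-liner refers to).
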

\begin{proof}
    In the scalar-valued case this follows from \eqref{equ:matrix-coefficient-eigenfunction} and the fact that \(\Lapl\) is formally self-adjoint.
    The general case then holds, since, for any \(v' \in \Vecs'\), we have
    \begin{align*}
        \Eval{v'}{\FTO (\Lapl f)(\IrredRep)}
         &= \FTO\left(\Eval{v'}{\Lapl f}\right)(\IrredRep)
         = \FTO\left(\Lapl \Eval{v'}{f}\right)(\IrredRep)
         = -\IrredRepE \FTO \left(\Eval{v'}{f}\right)(\IrredRep)\\
         &= \Eval{v'}{-\IrredRepE \FTO(f)(\IrredRep)}
         .
         \qedhere
    \end{align*}
\end{proof}

We define the composition \(A \circ B \in \Vecs \otimes \mathcal{L}(\IrredRepS)\) for \(A \in \mathcal{L}(\IrredRepS)\) and \(B \in \Vecs \otimes \mathcal{L}(\IrredRepS)\) via \((\Id_{\Vecs} \otimes (A \circ \cdot))(B)\).
\begin{lemma}
    \label{thm:convolution-to-composition}
    For all \(f \in \Cont(\LieG)\) and \(g \in \Cont(\LieG;\Vecs)\), 
    \[
        \FT {f \ast g} (\IrredRep)
        =  \FT{f} (\IrredRep) \circ \FT{g} (\IrredRep)
        \qquad \IrredRep \in \DLieG
        .
    \]
\end{lemma}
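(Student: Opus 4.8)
The plan is to reduce the vector-valued identity to the classical scalar-valued convolution theorem on \(\LieG\) by testing against functionals \(v' \in \Vecs'\). The two ingredients that make this work are the compatibility \(\FT{\Eval{v'}{h}}(\IrredRep) = \Eval{v'}{\FT{h}(\IrredRep)}\) recorded just above the statement, and the observation that, since \(\IrredRepS\) is finite dimensional and \(\Vecs'\) separates the points of \(\Vecs\), the maps \(\Eval{v'}{\cdot} \otimes \Id_{\IrredRepS}\), \(v' \in \Vecs'\), separate the points of \(\Vecs \otimes \mathcal{L}(\IrredRepS)\).

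First I would establish the purely scalar identity \(\FT{\phi \ast \psi}(\IrredRep) = \FT{\phi}(\IrredRep) \circ \FT{\psi}(\IrredRep)\) for \(\phi, \psi \in \Cont(\LieG)\). Since \((y,x) \mapsto \phi(y)\psi(y^{-1}x)\) is continuous on the compact group \(\LieG \times \LieG\), Fubini's theorem applies and
\[
    \FT{\phi \ast \psi}(\IrredRep) = \int_{\LieG} \phi(y) \left( \int_{\LieG} \psi(y^{-1}x)\, \IrredRep(x) \dl{x} \right) \dl{y}.
\]
In the inner integral I would substitute \(x = yz\); by left-invariance of the Haar measure (\(\dl{x} = \dl{z}\)) and the homomorphism property \(\IrredRep(yz) = \IrredRep(y)\IrredRep(z)\), it equals \(\IrredRep(y)\, \FT{\psi}(\IrredRep)\), whence
\[
    \FT{\phi \ast \psi}(\IrredRep) = \left( \int_{\LieG} \phi(y)\, \IrredRep(y) \dl{y} \right) \circ \FT{\psi}(\IrredRep) = \FT{\phi}(\IrredRep) \circ \FT{\psi}(\IrredRep).
\]

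Next I would treat the general case. Fix \(v' \in \Vecs'\). Because \((f \ast g)(x) = \int_{\LieG} f(y)\, g(y^{-1}x) \dl{y}\) is a Gelfand--Pettis integral and \(v'\) is continuous and linear, \(\Eval{v'}{f \ast g} = f \ast \Eval{v'}{g}\) as elements of \(\Cont(\LieG)\). Applying \(\Eval{v'}{\cdot}\) to the left-hand side and invoking the scalar identity with \(\phi = f\) and \(\psi = \Eval{v'}{g}\) gives
\[
    \Eval{v'}{\FT{f \ast g}(\IrredRep)} = \FT{\Eval{v'}{f \ast g}}(\IrredRep) = \FT{f \ast \Eval{v'}{g}}(\IrredRep) = \FT{f}(\IrredRep) \circ \FT{\Eval{v'}{g}}(\IrredRep).
\]
For the right-hand side, the composition \(\FT{f}(\IrredRep) \circ \cdot\) is realized as \(\Id_{\Vecs} \otimes (\FT{f}(\IrredRep) \circ \cdot)\) while \(\Eval{v'}{\cdot}\) acts as \(\Eval{v'}{\cdot} \otimes \Id_{\IrredRepS}\); these elementary-tensor operators act on different factors and therefore commute, so \(\Eval{v'}{\FT{f}(\IrredRep) \circ \FT{g}(\IrredRep)} = \FT{f}(\IrredRep) \circ \Eval{v'}{\FT{g}(\IrredRep)} = \FT{f}(\IrredRep) \circ \FT{\Eval{v'}{g}}(\IrredRep)\). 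Thus both sides of the asserted identity have the same image under every \(\Eval{v'}{\cdot} \otimes \Id_{\IrredRepS}\), and by the separation property noted above they coincide.

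The computation is essentially routine; the only points requiring care are the interchange of \(\Eval{v'}{\cdot}\) with the composition on \(\Vecs \otimes \mathcal{L}(\IrredRepS)\), which holds because the two operations act on different tensor factors, and the passage of \(v'\) through the convolution integral, which is licit because the latter is a Gelfand--Pettis integral over the compact group \(\LieG\). I do not expect any genuine obstacle here.
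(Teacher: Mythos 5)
Your proof is correct, but it is organized differently from the paper's. The paper proves the identity by a single chain of equalities performed directly on the \(\Vecs \otimes \mathcal{L}(\IrredRepS)\)-valued weak integrals: it writes \(\FT{f \ast g}(\IrredRep)\) as a double integral, interchanges the order of integration, substitutes using the translation invariance of the Haar measure and the homomorphism property \(\IrredRep(yz) = \IrredRep(y)\IrredRep(z)\), and pulls the operator \(\int_{\LieG} f(y)\IrredRep(y)\dl{y}\) out of the remaining integral. You instead carry out that same core computation (Fubini, substitution \(x = yz\), homomorphism property) purely at the scalar level, and then lift the result to the vector-valued setting by testing against functionals \(v' \in \Vecs'\), using the defining property of the Gelfand--Pettis integral, the compatibility \(\FT{\Eval{v'}{h}}(\IrredRep) = \Eval{v'}{\FT{h}(\IrredRep)}\), the commutation of \(\Eval{v'}{\cdot}\otimes\Id_{\IrredRepS}\) with \(\Id_\Vecs \otimes (\FT{f}(\IrredRep)\circ\Cdot)\), and the fact that these functionals separate points of \(\Vecs\otimes\mathcal{L}(\IrredRepS)\) (finite dimensionality of \(\IrredRepS\) plus Hahn--Banach). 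The trade-off: the paper's direct computation is shorter, but its manipulations of vector-valued weak integrals (Fubini, pulling an operator through the integral) are themselves justified by exactly the kind of duality argument you make explicit; your version confines Fubini to continuous scalar functions on a compact group, where it is classical, at the cost of an extra layer of bookkeeping. Both proofs are sound, and yours can fairly be viewed as a fully detailed rendering of the steps the paper compresses.
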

\begin{proof}
    We have 
    \begin{align*}
        \FT {f \ast g} (\IrredRep)
         &= \int_G \left(\int_G f(y) g(y^{-1}x) \dl{y} \right) \otimes \IrredRep(x) \dl{x} \\
         &= \int_G \left(\int_G g(y^{-1}x) \otimes f(y) \IrredRep(x) \dl{x}\right) \dl{y} \\
         &= \int_G g(x) \otimes \left( \left(\int_{G} f(y) \IrredRep(y) \dl{y}\right)\circ \IrredRep(x)\right) \dl{x} \\
         &= \left(\int_G f(y) \IrredRep(y) \dl{y}\right) \circ \left(\int_G g(x) \otimes \IrredRep(x) \dl{x} \right)\\
         &=\FT{f} (\IrredRep) \circ \FT{g} (\IrredRep)
         .
         \qedhere
    \end{align*}
\end{proof}

We define \(\SmoothC(\DLieG;\Vecs)\) as the space consisting of all \(T = (T_{\IrredRep})_{\IrredRep} \in \prod_{\IrredRep \in \DLieG} \Vecs \otimes \mathcal{L}(\IrredRepS)\) such that, for all \(p \in \csn(\Vecs)\) and \(j \in \N\),
\[
    \FT{p}_{j}(T) 
    = \sup_{\IrredRep \in \DLieG} \pHSnorm(T_{\IrredRep}) (1+\IrredRepE)^{j} 
    < \infty
    .
\]
We endow this space with the Hausdorff locally convex topology generated by the system of seminorms \(\{ \FT{p}_{j} \mid j \in \N, p \in \csn(\Vecs) \}\).
We write \(\SmoothC(\DLieG) = \SmoothC(\DLieG;\C)\).

By \eqref{equ:summability-eigenvalues}, \(\SmoothC(\DLieG)\) is continuously embedded into \(\ell_{2}(\DLieG)\).
For \(T = (T_{\IrredRep})_{\IrredRep} \in \SmoothC(\DLieG;\Vecs)\), we define 
\[
    \FTO^{-1}(T)(x)
    = \sum_{\IrredRep \in \DLieG} \IrredRepD \Trace [\IrredRep^{\ast}(x)\circ T_{\IrredRep}]
    ,
\]
where \(\Trace\) is to be understood as \(\Id_{\Vecs} \otimes \Trace \).

\begin{proposition}
    \label{thm:characterization-coefficients-smooth}
    The Fourier transform \(\FTO \colon \Smooth(\LieG;\Vecs) \to \SmoothC(\DLieG;\Vecs)\) is an isomorphism of lcHs and \(\FTO^{-1}\) is its inverse.
\end{proposition}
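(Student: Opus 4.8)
The plan is to prove the scalar statement $\Vecs=\C$ first and then lift it to arbitrary $\Vecs$ through the weak characterization of \cref{thm:weak-smooth}. Two facts make the lifting routine. First, by \eqref{equ:bipolar-theorem}, the identity $P\Eval{v'}{f}=\Eval{v'}{Pf}$ for differential operators $P$, and \cref{thm:smoothness-laplacian}, every seminorm of $\Smooth(\LieG;\Vecs)$ can be written as $p_{j}(f)=\sup_{v'\in V_p^\circ}\norm{\Eval{v'}{f}}_{j}$, where $\norm{\Cdot}_{j}$ is the corresponding scalar seminorm; likewise $\FT{p}_{j}(T)=\sup_{v'\in V_p^\circ}\sup_{\IrredRep}\HSnorm{\Eval{v'}{T_\IrredRep}}\IrredRepEP^{j}$ for $\SmoothC(\DLieG;\Vecs)$. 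Second, $\FTO$ and $\FTO^{-1}$ intertwine the functionals $\Eval{v'}{\Cdot}$, namely $\Eval{v'}{\FT f(\IrredRep)}=\FT{\Eval{v'}{f}}(\IrredRep)$ and $\Eval{v'}{\FTO^{-1}(T)}=\FTO^{-1}(\Eval{v'}{T})$ (the latter makes sense because $\Eval{v'}{T}\in\SmoothC(\DLieG)$ for every $v'\in\Vecs'$). Granting the scalar result, taking $\sup_{v'\in V_p^\circ}$ in the scalar continuity estimates yields continuity of $\FTO$ and $\FTO^{-1}$, membership in the respective target spaces follows from \cref{thm:weak-smooth}, and the relations $\FTO\circ\FTO^{-1}=\Id$ and $\FTO^{-1}\circ\FTO=\Id$ follow by evaluating against every $v'\in\Vecs'$ and using that $\Vecs'$ separates points.

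For the scalar continuity of $\FTO$, I would use \cref{thm:eigenvalues-laplacian} in the form $\FT{\LaplP^{k}f}(\IrredRep)=\IrredRepEP^{k}\FT f(\IrredRep)$. Since $\IrredRep(x)$ is unitary, the operator-norm bound $\norm{\FT f(\IrredRep)}_{\mathrm{op}}\leq\norm{f}_{\infty}$ gives $\HSnorm{\FT f(\IrredRep)}\leq\sqrt{\IrredRepD}\,\norm{f}_{\infty}$; applied to $\LaplP^{k}f$ this reads $\IrredRepEP^{k}\HSnorm{\FT f(\IrredRep)}\leq\sqrt{\IrredRepD}\,\norm{\LaplP^{k}f}_{\infty}$. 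The nuisance factor $\sqrt{\IrredRepD}$ is tamed by \eqref{equ:summability-eigenvalues}: boundedness of its summands (with $\alpha=\Dim$) forces $\IrredRepD\leq C\IrredRepEP^{\Dim/2}$. Hence, taking $k=j+r$ with $r\geq\Dim/4$ and discarding $\IrredRepEP^{-r}\leq 1$, I obtain $\FT{\abs{\Cdot}}_{j}(\FT f)\leq C\norm{\Cdot}_{j+r}(f)$.

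For $\FTO^{-1}$, I would estimate the general term of the defining series by Cauchy--Schwarz for the Hilbert--Schmidt inner product together with $\HSnorm{\IrredRep^{\ast}(x)}=\sqrt{\IrredRepD}$, giving $\abs{\IrredRepD\Trace[\IrredRep^{\ast}(x)\circ T_\IrredRep]}\leq\IrredRepD^{3/2}\HSnorm{T_\IrredRep}$. Bounding $\HSnorm{T_\IrredRep}\leq\FT{\abs{\Cdot}}_{j}(T)\IrredRepEP^{-j}$ and using $\IrredRepD^{3/2}\leq\IrredRepD^{2}$ with \eqref{equ:summability-eigenvalues}, the series converges absolutely and uniformly as soon as $j>\Dim/2$, so $\FTO^{-1}(T)\in\Cont(\LieG)$. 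Because the matrix coefficients of $\IrredRep^{\ast}$ are eigenfunctions of $\Lapl$ for $-\IrredRepE$ (cf.\ \eqref{equ:matrix-coefficient-eigenfunction}), applying $\LaplP^{m}$ termwise merely multiplies each summand by $\IrredRepEP^{m}$, and the same estimate (now with $j>m+\Dim/2$) shows every such series converges uniformly. Thus $\LaplP^{m}\FTO^{-1}(T)$ is continuous for all $m$, so $\FTO^{-1}(T)$ lies in the iterate space that the elliptic regularity in the proof of \cref{thm:elliptic-smooth} identifies with $\Smooth(\LieG)$; the same bounds give $\norm{\Cdot}_{m}(\FTO^{-1}(T))\leq C\FT{\abs{\Cdot}}_{j}(T)$. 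Finally, $\FTO$ and $\FTO^{-1}$ are restrictions of the mutually inverse isometries $L^{2}(\LieG)\leftrightarrow\ell_{2}(\DLieG)$ from Peter--Weyl, which settles the inversion identities in the scalar case.

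The main difficulty I foresee is the bookkeeping of the dimensions $\IrredRepD$: the Hilbert--Schmidt norm unavoidably introduces factors $\sqrt{\IrredRepD}$ (and $\IrredRepD^{3/2}$ on the inverse side), and the whole scheme rests on absorbing their polynomial growth $\IrredRepD\lesssim\IrredRepEP^{\Dim/2}$ --- a manifestation of Weyl's law encoded in \eqref{equ:summability-eigenvalues} --- into spare powers of $\IrredRepEP$. The second delicate point is upgrading the mere uniform convergence of the iterated series $\sum_{\IrredRep}\IrredRepD\IrredRepEP^{m}\Trace[\IrredRep^{\ast}(x)\circ T_\IrredRep]$ to genuine smoothness of $\FTO^{-1}(T)$; rather than differentiating matrix coefficients directly, I would recognize the limit as an element of the iterate space and invoke the elliptic regularity already contained in \cref{thm:elliptic-smooth}. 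Once these two scalar estimates are in place, the vector-valued statement is a formal consequence of the weak characterization.
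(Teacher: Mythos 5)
Your proposal is correct, and its analytic core is the same as the paper's: both directions rest on the eigenvalue identity \(\FT{\LaplP^{k}f}(\IrredRep)=\IrredRepEP^{k}\FT f(\IrredRep)\) together with the polynomial bound on \(\IrredRepD\) extracted from \eqref{equ:summability-eigenvalues} for the forward map, and on Cauchy--Schwarz for Hilbert--Schmidt norms together with \eqref{equ:summability-eigenvalues} for the inverse. The differences are organizational. You prove the scalar case and lift it via \cref{thm:weak-smooth} and the bipolar identity; the paper runs the identical computation directly in the vector-valued setting, since its seminorms \(\pHSnorm\) and \(p_{j}\) are by definition suprema over \(v'\in V_{p}^{\circ}\) of the scalar quantities, so the lift is built into the notation. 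For the smoothness of \(\FTO^{-1}(T)\) you differentiate the series termwise, identify the uniform limits distributionally, and invoke the elliptic-regularity argument (the space \(X\)) inside the proof of \cref{thm:elliptic-smooth}; the paper reaches the same conclusion more economically by noting that the series is absolutely summable for the seminorms \(p_{j}\), which by \cref{thm:smoothness-laplacian} generate the topology of \(\Smooth(\LieG;\Vecs)\), and that this space is sequentially complete, so the sum is automatically smooth. Both routes ultimately lean on the same elliptic-regularity input from \cref{sec:ultradifferentiability}.

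One step in your lifting needs to be tightened. You treat \(\Eval{v'}{\FTO^{-1}(T)}=\FTO^{-1}(\Eval{v'}{T})\) as meaningful because the right-hand side exists for every \(v'\in\Vecs'\); but convergence of the series against every functional does not by itself produce a limit in \(\Vecs\) --- a weakly convergent series in a sequentially complete lcHs need not converge in the original topology (in a Hilbert space, the partial sums of \(\sum_{n}(e_{n}-e_{n-1})\) converge weakly but not in norm). To know that \(\FTO^{-1}(T)(x)\) exists as an element of \(\Vecs\), apply your Cauchy--Schwarz bound termwise \emph{before} summing, i.e.\ \(\IrredRepD\, p\left(\Trace[\IrredRep^{\ast}(x)\circ T_{\IrredRep}]\right)\leq \IrredRepD^{3/2}\pHSnorm(T_{\IrredRep})\), and then use sequential completeness of \(\Vecs\) (or of \(\Smooth(\LieG;\Vecs)\), as the paper does). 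This is exactly your estimate with the supremum over \(v'\) taken inside the sum rather than after it, so the fix is immediate, but it must be said: as written, \cref{thm:weak-smooth} cannot be applied to \(\FTO^{-1}(T)\) before that function is known to exist.
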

\begin{proof}
    We start by showing that \(\FTO \colon \Smooth(\LieG;\Vecs) \to \SmoothC(\DLieG;\Vecs)\) is continuous. 
    Let \(p \in \csn(\Vecs)\) and \(j \in \N\) be arbitrary.
    By \eqref{equ:summability-eigenvalues}, there is \(C_1 > 0\) such that \(\IrredRepD \leq C_1 \IrredRepEP^{\Dim}\) for all \(\IrredRep \in \DLieG\).
    Hence, for all \(f \in \Smooth(\LieG;\Vecs)\)
    \begin{align*}
        \IrredRepEP^{j} \, \pHSnorm(\FT f (\IrredRep)) 
       &= \IrredRepEP^{-\Dim}\pHSnorm\left(\FTO ({\LaplP^{j+\Dim} f)} (\IrredRep)\right)\\
       &= \IrredRepEP^{-\Dim} \sup_{v' \in V_{p}^{\circ}}\HSnorm{\FTO \left(\Eval{v'}{\LaplP^{j+\Dim} f}\right) (\IrredRep)} \\
       &\leq \IrredRepEP^{-\Dim}\sup_{v' \in V_{p}^{\circ}}\int _{\LieG} \abs*{\Eval{v'}{\LaplP^{j+\Dim} f(x)}} \HSnorm{\IrredRep(x)} \dl{x} \\
       &\leq \IrredRepEP^{-\Dim} \IrredRepD \int _{\LieG} \sup_{v' \in V_{p}^{\circ}}\abs*{\Eval{v'}{\LaplP^{j+\Dim} f(x)}} \dl{x} \\
       &\leq C_1 \sup_{x \in \LieG} p\left(\LaplP^{j+\Dim} f(x)\right)
       .
    \end{align*}
    By \cref{thm:smoothness-laplacian}, the map \(\FTO \colon \Smooth(\LieG;\Vecs) \to \SmoothC(\DLieG;\Vecs)\) is continuous.

    Conversely, we now prove that \(\FTO^{-1} \colon \SmoothC(\DLieG;\Vecs) \to \Smooth(\LieG;\Vecs) \) is continuous.
    By \eqref{equ:summability-eigenvalues}, we have  \(C_2 = \left(\sum_{\IrredRep \in \DLieG} \IrredRepD^{2} \IrredRepEP^{-\Dim}\right) < \infty\).
    Let \(T = (T_{\IrredRep})_{\IrredRep} \in \SmoothC(\DLieG;\Vecs)\) be arbitrary.
    For all \(p \in \csn(\Vecs)\) and \(j \in \N\),
    \begin{align}
        \label{equ:estimate-fourier-series}
        \begin{split}
            \sum_{\IrredRep \in \DLieG} \IrredRepD p_j\left(\Trace [ \IrredRep^{\ast}(x) \circ T_{\IrredRep}  ]\right)
             &=\sum_{\IrredRep \in \DLieG} \IrredRepD \sup_{i \leq j} \sup_{x \in G} \sup_{v' \in V_{p}^{\circ}} \abs*{\Eval{v'}{\LaplP^{i}\left(\Trace [\IrredRep^{\ast}(x)\circ T_{\IrredRep} ] \right)}} \\
             &\leq \sum_{\IrredRep \in \DLieG} \IrredRepD \sup_{i \leq j} \sup_{x \in G} \sup_{v' \in V_{p}^{\circ}}\abs*{\LaplP^{i}\Trace [\IrredRep^{\ast}(x) \circ \Eval{v'}{T_{\IrredRep}}] } \\
             &\leq \sum_{\IrredRep \in \DLieG} \IrredRepD \IrredRepEP^{j} \sup_{x \in G} \sup_{v' \in V_{p}^{\circ}} \HSnorm{\Eval{v'}{T_{\IrredRep}}} \HSnorm{\IrredRep(x)} \\
             &\leq \sum_{\IrredRep \in \DLieG} \IrredRepD^{2} \IrredRepEP^{j} \pHSnorm(T_{\IrredRep}) \\
             &\leq C_2 \FT{p}_{j+\Dim}(T)
             .
        \end{split}
    \end{align}
    Here, we used the Cauchy-Schwartz inequality for the Hilbert-Schmidt norm.
    This shows that the series
    \[
        \FTO^{-1}(T) 
        = \sum_{\IrredRep \in \DLieG} \IrredRepD \Trace [\IrredRep^{\ast}\circ T_{\IrredRep} ]
    \]
    is absolutely summable and hence summable in \(\Smooth(G;E)\) (as this space is sequentially complete).
    Moreover, by \eqref{equ:estimate-fourier-series}, we have, for all \(p \in \csn(\Vecs)\) and \(j \in \N\),
    \[
        p_j( \FTO^{-1}(T)) 
        \leq C_2 \FT{p}_{j+\Dim}(T)
        .
    \]
    By \cref{thm:smoothness-laplacian}, the map \(\FTO^{-1} \colon \SmoothC(\DLieG;\Vecs) \to \Smooth(\LieG;\Vecs) \) is continuous.
    Finally, it is clear that \(\FTO\) and \(\FTO^{-1}\) are each other's inverses.
\end{proof}

\begin{remark}
    By inspecting the proof of \cref{thm:characterization-coefficients-smooth}, one finds that there are \(C_1,C_2 >0\) such that for all \(p \in \csn(\Vecs)\)
    \begin{flalign}
        \label{equ:estimate-HS-sup}
        \pHSnorm(\FT f (\IrredRep)) 
        \leq C_{1}\inf_{j \in \N} \IrredRepEP^{-j+\Dim} \psupnorm(\LaplP^{j}f), && f \in \Smooth(\LieG;\Vecs),\ \IrredRep \in \DLieG, \\
        \label{equ:estimate-sup-HS}
        \psupnorm(\LaplP^{j}\mathcal{F}^{-1}(T)) \leq C_{2} \sup_{\IrredRep \in \DLieG} \IrredRepEP^{j+\Dim}\pHSnorm(T_{\IrredRep}), && T = (T_{\IrredRep})_{\IrredRep} \in \SmoothC(\DLieG,\Vecs),\ j \in \N
        .
    \end{flalign}
\end{remark}

Given \(h >0\), we define \(\SDiffC{h}(\DLieG;\Vecs)\) as the space consisting of all \(T = (T_{\IrredRep})_{\IrredRep} \in \prod_{\IrredRep \in \DLieG} \Vecs \otimes \mathcal{L}(\IrredRepS)\) such that, for all \(p \in \csn(\Vecs)\),
\[
    \FT{p}_{\Weight,h}(T) 
    = \sup_{\IrredRep \in \DLieG} \pHSnorm(T_{\IrredRep}) e^{\frac{1}{h}\Weight\left(\sqrt{\IrredRepE}\right)} 
    < \infty
    .
\]
We endow this space with the Hausdorff locally convex topology generated by the system of seminorms \(\{ \FT{p}_{\Weight,h} \mid p \in \csn(\Vecs) \}\).
We set 
\[
    \BDiffC(\DLieG;\Vecs) = \varprojlim_{h \to 0^+} \SDiffC{h}(\DLieG;\Vecs)
    \qquad \text{and} \qquad
    \RDiffC(\DLieG;\Vecs) = \varprojlim_{\Weight* = o(\Weight)} \BDiffC[\Weight*](\DLieG;\Vecs) 
    .
\]
Condition \((\gamma)\) implies that \(\UDiffC(\DLieG;\Vecs)\) is continuously embedded into \(\SmoothC(\DLieG;\Vecs)\).
We define \(\BUDiffC(\DLieG;\Vecs) = \bigcup_{B \in \Bnds} \UDiffC(\DLieG;\Vecs_{B})\) and endow this space with its natural convex vector bornology.

For $h >0$ we define $\SDiff{h}_{\LaplP*}(\LieG;\Vecs)$ as the space consisting of all $f \in C^\infty(G; \Vecs)$ such that 
    $p_{\Weight,h}(f) < \infty$ for all $p \in \csn(E)$. We endow $\SDiff{h}_{\LaplP*}(\LieG;\Vecs)$ with the Hausdorff locally convex topology generated by the system of seminorms \(\{ p_{\Weight,h} \mid p \in \csn(\Vecs) \}\). Note that 
    \begin{equation}
        \label{proj-description}
        \BDiff_{\LaplP*}(\LieG;\Vecs) = \varprojlim_{h \to 0^+} \SDiff{h}_{\LaplP*}(\LieG;\Vecs) 
    \end{equation}
as lcHs.

\begin{lemma}~
    \label{thm:characterization-coefficients-ultradifferentiable-step}
    \begin{enumerate}
        \item For all \(h > 0\) there is \(h'<h\) such that \(\FTO \colon \SDiff{h'}_{\LaplP*}(\LieG;\Vecs) \to \SDiffC{h}(\DLieG;\Vecs)\) is continuous.
        \item For all \(h > 0\) there is \(h'<h\) such that \(\FTO^{-1} \colon \SDiffC{h'}(\DLieG;\Vecs) \to \SDiff{h}_{\LaplP*}(\LieG;\Vecs)\) is continuous.
    \end{enumerate}
\end{lemma}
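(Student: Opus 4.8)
The plan is to establish each continuity assertion by a direct estimate between the defining seminorms, transporting each weighted bound across the Fourier transform by means of the diagonalization \(\FT{\LaplP^{j}f}(\IrredRep)=\IrredRepEP^{j}\FT{f}(\IrredRep)\) from \cref{thm:eigenvalues-laplacian}, together with the two inequalities \eqref{equ:estimate-HS-sup} and \eqref{equ:estimate-sup-HS} already recorded in the smooth case and with Weyl's bound \(\IrredRepD\le C_{1}\IrredRepEP^{\Dim}\) coming from \eqref{equ:summability-eigenvalues}. Since \(\SDiff{h'}_{\LaplP*}(\LieG;\Vecs)\subseteq\Smooth(\LieG;\Vecs)\) and \(\SDiffC{h'}(\DLieG;\Vecs)\subseteq\SmoothC(\DLieG;\Vecs)\) (the latter by condition \((\gamma)\)), \cref{thm:characterization-coefficients-smooth} guarantees that \(\FT{f}\) and \(\FTO^{-1}(T)\) are well defined, so in both parts only the weighted bounds have to be verified. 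The single real difficulty is to reconcile the weight \(\Weight(\sqrt{\IrredRepE})\), carried by the eigenvalue, with the Young-conjugate weight \(\WeightYoung(2jh)\), indexed by the power \(j\) of \(\LaplP*\); both the factor \(2\) and the square root stem from \(\Lapl\) having degree \(2\), and the polynomial factors \(\IrredRepEP^{\Dim}\) must be absorbed using the slack between \(h'\) and \(h\) via conditions \((\alpha)\) and \((\gamma)\).

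For \((i)\), fix \(p\in\csn(\Vecs)\) and \(f\in\SDiff{h'}_{\LaplP*}(\LieG;\Vecs)\). Combining \eqref{equ:estimate-HS-sup} with the bound \(\psupnorm(\LaplP^{j}f)\le p_{\Weight,h'}(f)\exp(\tfrac1{h'}\WeightYoung(2jh'))\), valid for every \(j\) by definition of \(p_{\Weight,h'}\), yields
\[
    \pHSnorm(\FT{f}(\IrredRep))\,e^{\frac1h\Weight(\sqrt{\IrredRepE})}
    \le C_{1}\,p_{\Weight,h'}(f)\,\IrredRepEP^{\Dim}
    \exp\!\Big(\tfrac1h\Weight(\sqrt{\IrredRepE})-\sup_{j\in\N}\big[\,j\log\IrredRepEP-\tfrac1{h'}\WeightYoung(2jh')\,\big]\Big).
\]
Thus it suffices to bound the logarithm of the \(\IrredRep\)-dependent factor, namely \(\Dim\log\IrredRepEP+\tfrac1h\Weight(\sqrt{\IrredRepE})-\sup_{j}[\,j\log\IrredRepEP-\tfrac1{h'}\WeightYoung(2jh')\,]\), from above uniformly in \(\IrredRep\). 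Picking \(h_{1}\in(0,h)\) and applying \eqref{equ:estimate-weight-young} to \(h_{1}\) furnishes \(h'<h_{1}\) and \(C>0\) with \(\tfrac1{h_{1}}\Weight(t)\le\sup_{k}[\,k\log t-\tfrac1{h'}\WeightYoung(kh')\,]+C\); putting \(t=\sqrt{\IrredRepE}\) and splitting the supremum over even indices \(k=2j\) (so that \(k\log\sqrt{\IrredRepE}=j\log\IrredRepE\)) and odd indices (controlled since \(\WeightYoung\) is increasing) gives \(\tfrac1{h_{1}}\Weight(\sqrt{\IrredRepE})\le\sup_{j}[\,j\log\IrredRepE-\tfrac1{h'}\WeightYoung(2jh')\,]+\tfrac12\log\IrredRepE+C\). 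Since \(\log\IrredRepE\le\log\IrredRepEP\), the quantity above is therefore \(\le(\tfrac1h-\tfrac1{h_{1}})\Weight(\sqrt{\IrredRepE})+\Dim\log\IrredRepEP+\tfrac12\log\IrredRepE+C\), which is bounded above uniformly in \(\IrredRep\) because \(h_{1}<h\) and condition \((\gamma)\) gives \(\log\IrredRepEP=o(\Weight(\sqrt{\IrredRepE}))\). Hence \(\FT{p}_{\Weight,h}(\FT{f})\le C'p_{\Weight,h'}(f)\).

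For \((ii)\), fix \(p\in\csn(\Vecs)\) and \(T\in\SDiffC{h'}(\DLieG;\Vecs)\). Feeding \(\pHSnorm(T_{\IrredRep})\le\FT{p}_{\Weight,h'}(T)\exp(-\tfrac1{h'}\Weight(\sqrt{\IrredRepE}))\) into \eqref{equ:estimate-sup-HS} reduces the claim \(p_{\Weight,h}(\FTO^{-1}(T))\le C'\FT{p}_{\Weight,h'}(T)\) to the scalar inequality
\[
    (j+\Dim)\log\IrredRepEP\le\tfrac1h\WeightYoung(2jh)+\tfrac1{h'}\Weight(\sqrt{\IrredRepE})+C,
    \qquad j\in\N,\ \IrredRep\in\DLieG.
\]
Here the decisive tool is the Young inequality \(\WeightYoung(2jh)\ge 2jh\,u-\WeightExp(u)\): choosing \(u=\tfrac12\log\IrredRepEP\) gives \(j\log\IrredRepEP\le\tfrac1h\WeightYoung(2jh)+\tfrac1h\Weight(\sqrt{\IrredRepEP*})\), since \(\WeightExp(u)=\Weight(e^{u})=\Weight(\sqrt{\IrredRepEP*})\). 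Condition \((\alpha)\) then bounds \(\Weight(\sqrt{\IrredRepEP*})\le A\Weight(\sqrt{\IrredRepE})+B\), while condition \((\gamma)\) absorbs the remaining term \(\Dim\log\IrredRepEP\) into \(\Weight(\sqrt{\IrredRepE})+C_{1}\); the coefficient accruing on \(\Weight(\sqrt{\IrredRepE})\) is then the fixed number \(\tfrac Ah+1\), so choosing \(h'\le\tfrac{h}{A+h}<h\) makes it at most \(\tfrac1{h'}\) and establishes the displayed inequality. As anticipated, the main obstacle throughout is precisely this bookkeeping across the degree-\(2\) mismatch between \(\Weight(\sqrt{\IrredRepE})\) and \(\WeightYoung(2jh)\), which forces the even/odd splitting in \((i)\) and the choice \(u=\tfrac12\log\IrredRepEP\) in \((ii)\), the Weyl polynomial factors being disposed of by shrinking \(h'\).
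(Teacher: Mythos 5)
Your proof is correct and takes essentially the same route as the paper's: both parts hinge on \eqref{equ:estimate-HS-sup} and \eqref{equ:estimate-sup-HS}, with part \((i)\) reduced to an even-index form of \eqref{equ:estimate-weight-young} and part \((ii)\) to the Young-conjugate inequality at \(u=\tfrac12\log\IrredRepEP\) combined with conditions \((\alpha)\) and \((\gamma)\); the paper simply invokes the even-index variant of \eqref{equ:estimate-weight-young} directly (evaluated at \(t=\sqrt{\IrredRepEP*}\) rather than \(t=\sqrt{\IrredRepE}\)), whereas you derive it explicitly via the even/odd splitting. The only detail to polish is that your odd-index estimate with the extra term \(\tfrac12\log\IrredRepE\) presupposes \(\IrredRepE\geq 1\) (it should be read with \(\tfrac12\log^{+}\IrredRepE\)), but since only finitely many \(\IrredRep\in\DLieG\) have \(\IrredRepE<1\), the required uniform bound over \(\IrredRep\) is unaffected.
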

\begin{proof}
    \((i)\) 
    Fix \(h'' < h\).
    Property \eqref{equ:estimate-weight-young} implies that there are \(h' < h''\) and \(C >0\) such that
    \[
        \frac{1}{h''} \Weight(t)
        \leq \sup_{j \in \N} 2j \log(t) - \frac{1}{h'} \WeightYoung\left(2jh'\right) + \log C
        , \qquad t > 0
        .
    \]
    In view of condition \((\gamma)\), there is \(C' >0\) such that
    \[
        \IrredRepEP^{\Dim} e^{\frac{1}{h} \Weight\left(\sqrt{\IrredRepE}\right)} \leq C'e^{\frac{1}{h''} \Weight\left(\sqrt{\IrredRepEP*}\right)}
        , \qquad \IrredRep \in \DLieG
        .
    \]
    Let \(f\in \SDiff{h'}_{\LaplP*}(\LieG;\Vecs)\) be arbitrary.
    By \eqref{equ:estimate-HS-sup}, we obtain that, for all \(p \in \csn(E)\) and \(\IrredRep \in \DLieG\),
    \begin{align*}
        \pHSnorm(\FT f(\IrredRep))
         &\leq C_{1}\inf_{j \in \N} \IrredRepEP^{-j+\Dim} \psupnorm(\LaplP^{j}f) \\
         &\leq C_{1}p_{\Weight,h'}(f) \IrredRepEP^{\Dim}\inf_{j \in \N} \IrredRepEP^{-j} e^{\frac{1}{h'}\WeightYoung(2jh')} \\
         &\leq CC_{1}p_{\Weight,h'}(f) \IrredRepEP^{\Dim}e^{-\frac{1}{h''} \Weight\left(\sqrt{\IrredRepEP*}\right)} \\
         &\leq CC'C_{1}p_{\Weight,h'}(f) e^{-\frac{1}{h} \Weight\left(\sqrt{\IrredRepE}\right)}
         .
    \end{align*}
    Hence,
    \[
        \FT{p}_{\Weight,h}(\widehat{f}) \leq CC'C_1p_{\Weight,h'}(f)
        .
    \]
    \((ii)\) By conditions \((\alpha)\) and \((\gamma)\), there are \(h' < h\) and \(C>0\) such that
    \[
        \IrredRepEP^{\Dim} e^{\frac{1}{h} \Weight\left(\sqrt{\IrredRepEP*}\right)} \leq Ce^{\frac{1}{h'} \Weight\left(\sqrt{\IrredRepE}\right)}
        , \qquad \IrredRep \in \DLieG
        .
    \]
    Let \(T = (T_{\IrredRep})_{\IrredRep} \in \SDiffC{h'}(\DLieG;\Vecs)\) be arbitrary.
    The definition of the Young conjugate \(\WeightYoung\) and \eqref{equ:estimate-sup-HS} imply that, for all \(p \in \csn(E)\) and \(j \in \N\),
    \begin{align*}
        \psupnorm(\LaplP^{j}\mathcal{F}^{-1}(T)) 
         &\leq C_{2} \sup_{\IrredRep \in \DLieG} \IrredRepEP^{j+\Dim}\pHSnorm(T_{\IrredRep}) \\
         & \leq C_{2} \widehat{p}_{\Weight,h'}(T) \sup_{\IrredRep \in \DLieG}\IrredRepEP^{j+\Dim}e^{-\frac{1}{h'} \Weight\left(\sqrt{\IrredRepE}\right)}\\
         & \leq CC_{2} \widehat{p}_{\Weight,h'}(T) \sup_{\IrredRep \in \DLieG}\IrredRepEP^{j}e^{-\frac{1}{h} \Weight\left(\sqrt{\IrredRepEP*}\right)} \\
         & \leq CC_{2} \widehat{p}_{\Weight,h'}(T) e^{\frac{1}{h} \WeightYoung\left(2jh\right) } 
         .
    \end{align*}
    Hence,
    \[
        {p}_{\Weight,h}(\mathcal{F}^{-1}(T)) \leq CC_{2}\widehat{p}_{\Weight,h'}(T) 
        .
        \qedhere
    \]
\end{proof}

\begin{proposition}
    \label{thm:characterization-coefficients-ultradifferentiable}
    The Fourier transform \(\FTO \colon \UDiff(\LieG;\Vecs) \to \UDiffC(\DLieG;\Vecs)\) is an isomorphism of lcHs and \(\FTO^{-1}\) is its inverse.
\end{proposition}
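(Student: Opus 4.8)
The plan is to reduce everything to the two one-parameter continuity estimates of \cref{thm:characterization-coefficients-ultradifferentiable-step}, which already carry all the analytic weight, and to glue them together by a soft projective-limit argument; I will treat the Beurling case first and then deduce the Roumieu case.

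For the Beurling case, I would start from the identifications \(\BDiff(\LieG;\Vecs) = \BDiff_{\LaplP*}(\LieG;\Vecs) = \varprojlim_{h \to 0^+} \SDiff{h}_{\LaplP*}(\LieG;\Vecs)\), the first by \cref{thm:ultradifferentiability-laplacian} and the second by \eqref{proj-description}, together with the defining equality \(\BDiffC(\DLieG;\Vecs) = \varprojlim_{h \to 0^+} \SDiffC{h}(\DLieG;\Vecs)\). Since the target is a projective limit, a linear map into \(\BDiffC(\DLieG;\Vecs)\) is continuous precisely when each of its compositions with the canonical projections onto the steps \(\SDiffC{h}(\DLieG;\Vecs)\) is continuous. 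Fixing \(h > 0\), part (i) of \cref{thm:characterization-coefficients-ultradifferentiable-step} provides \(h' < h\) with \(\FTO \colon \SDiff{h'}_{\LaplP*}(\LieG;\Vecs) \to \SDiffC{h}(\DLieG;\Vecs)\) continuous; precomposing with the projection \(\BDiff(\LieG;\Vecs) \to \SDiff{h'}_{\LaplP*}(\LieG;\Vecs)\) gives the required continuity and, at the same time, shows that \(\FTO\) indeed maps \(\BDiff(\LieG;\Vecs)\) into \(\BDiffC(\DLieG;\Vecs)\), since its image lies in every step \(\SDiffC{h}(\DLieG;\Vecs)\). The symmetric argument based on part (ii) of the same lemma yields the continuity of \(\FTO^{-1} \colon \BDiffC(\DLieG;\Vecs) \to \BDiff(\LieG;\Vecs)\).

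To see that \(\FTO\) and \(\FTO^{-1}\) are genuinely inverse to one another, I would invoke condition \((\gamma)\), which gives continuous inclusions \(\BDiff(\LieG;\Vecs) \hookrightarrow \Smooth(\LieG;\Vecs)\) and \(\BDiffC(\DLieG;\Vecs) \hookrightarrow \SmoothC(\DLieG;\Vecs)\); on the latter pair of spaces \cref{thm:characterization-coefficients-smooth} already asserts that \(\FTO\) and \(\FTO^{-1}\) are mutually inverse, so the same holds after restriction. This settles the Beurling case. For the Roumieu case, I would use the defining projective descriptions \(\RDiff(\LieG;\Vecs) = \varprojlim_{\Weight* = o(\Weight)} \BDiff[\Weight*](\LieG;\Vecs)\) and \(\RDiffC(\DLieG;\Vecs) = \varprojlim_{\Weight* = o(\Weight)} \BDiffC[\Weight*](\DLieG;\Vecs)\). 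Each subordinate weight \(\Weight*\) with \(\Weight* = o(\Weight)\) is again a weight function, and it automatically satisfies \((\beta_0)\) because \((\beta)\) gives \(\Weight(t) = O(t)\) and hence \(\Weight*(t) = o(t)\); thus the Beurling case applies verbatim with \(\Weight\) replaced by \(\Weight*\), making \(\FTO \colon \BDiff[\Weight*](\LieG;\Vecs) \to \BDiffC[\Weight*](\DLieG;\Vecs)\) an isomorphism for each such \(\Weight*\). As \(\FTO\) is a single linear map restricting to each level and is compatible with the bonding inclusions, passing to the projective limit over \(\Weight* = o(\Weight)\) preserves the isomorphism, so \(\FTO \colon \RDiff(\LieG;\Vecs) \to \RDiffC(\DLieG;\Vecs)\) is an isomorphism of lcHs with inverse \(\FTO^{-1}\).

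The genuine analytic difficulty has been absorbed into \cref{thm:characterization-coefficients-ultradifferentiable-step}; what remains and deserves a little care is purely structural, namely the cofinal reindexing \(h \mapsto h'\) (harmless because the steps \(\SDiffC{h}\) form a directed system as \(h \to 0^+\), so shrinking the index leaves the projective limit unchanged) and the verification that every \(\Weight* = o(\Weight)\) still obeys the standing Beurling convention \((\beta_0)\), which licenses the reuse of the Beurling argument inside the Roumieu projective limit.
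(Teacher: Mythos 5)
Your proposal is correct and takes essentially the same route as the paper: the paper's own (very terse) proof obtains the Beurling-case continuity of \(\FTO\) and \(\FTO^{-1}\) exactly from \cref{thm:ultradifferentiability-laplacian}, the equality \eqref{proj-description}, and \cref{thm:characterization-coefficients-ultradifferentiable-step}, and then reduces the Roumieu case to the Beurling case via the defining projective descriptions of \(\RDiff(\LieG;\Vecs)\) and \(\RDiffC(\DLieG;\Vecs)\). Your write-up merely makes explicit the structural points the paper leaves implicit --- the universal property of the projective limit with the cofinal reindexing \(h \mapsto h'\), the verification that \(\FTO\) and \(\FTO^{-1}\) are mutually inverse by restricting from the smooth setting of \cref{thm:characterization-coefficients-smooth}, and the observation that every \(\Weight*=o(\Weight)\) satisfies \((\beta_0)\) --- all of which are correct.
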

\begin{proof}
    The continuity of the maps \(\FTO \colon \BDiff(\LieG;\Vecs) \to \BDiffC(\DLieG;\Vecs)\) and \(\FTO^{-1} \colon \BDiffC(\DLieG;\Vecs) \to \BDiff(\LieG;\Vecs)\) follows from \cref{thm:ultradifferentiability-laplacian}, the equality \eqref{proj-description}, and \cref{thm:characterization-coefficients-ultradifferentiable-step}.
    The corresponding statements in the Roumieu case follow from those in the Beurling case by the definition of the spaces \(\RDiff(\LieG;\Vecs) \) and \(\RDiffC(\DLieG;\Vecs)\).
\end{proof}
\cref{thm:characterization-coefficients-ultradifferentiable} implies the following result.

\begin{corollary} \label{thm:characterization-coefficients-bornological}
    The Fourier transform \(\FTO \colon \BUDiff(\LieG;\Vecs) \to \BUDiffC(\DLieG;\Vecs)\) is a linear isomorphism and \(\FTO^{-1}\) is its inverse.
    Furthermore, both maps are bounded.
\end{corollary}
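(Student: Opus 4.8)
The plan is to reduce everything to \cref{thm:characterization-coefficients-ultradifferentiable}, applied not to \(\Vecs\) itself but to each of the local Banach spaces \(\Vecs_B\), \(B \in \Bnds\), and then to transport the conclusions through the bornological unions \(\BUDiff(\LieG;\Vecs) = \bigcup_{B} \UDiff(\LieG;\Vecs_B)\) and \(\BUDiffC(\DLieG;\Vecs) = \bigcup_{B} \UDiffC(\DLieG;\Vecs_B)\). Since the analytic content is already packaged in \cref{thm:characterization-coefficients-ultradifferentiable}, the only work is to glue the \(B\)-level isomorphisms into a single bornological one.

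First I would fix \(B \in \Bnds\) and note that, because \(\Vecs\) is sequentially complete, \(\Vecs_B\) is a Banach space, hence in particular a sequentially complete lcHs. Therefore \cref{thm:characterization-coefficients-ultradifferentiable} applies verbatim with \(\Vecs\) replaced by \(\Vecs_B\), yielding an isomorphism of lcHs
\[
    \FTO \colon \UDiff(\LieG;\Vecs_B) \to \UDiffC(\DLieG;\Vecs_B)
\]
whose inverse is \(\FTO^{-1}\). Next I would check that these \(B\)-level transforms are compatible with the Fourier transform on \(\Cont(\LieG;\Vecs)\) from which the whole discussion started. The canonical inclusion \(\Vecs_B \hookrightarrow \Vecs\) is continuous and linear, so it commutes with the Gelfand-Pettis integral; consequently, for \(f \in \UDiff(\LieG;\Vecs_B)\) the coefficients \(\FT{f}(\IrredRep)\) computed in \(\Vecs_B \otimes \mathcal{L}(\IrredRepS)\) map to those computed in \(\Vecs \otimes \mathcal{L}(\IrredRepS)\) under the induced inclusion, and the analogous statement holds for the inversion series.

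This makes the bijectivity immediate. Given \(f \in \BUDiff(\LieG;\Vecs)\), I would choose \(B\) with \(f \in \UDiff(\LieG;\Vecs_B)\); then \(\FTO(f) \in \UDiffC(\DLieG;\Vecs_B) \subseteq \BUDiffC(\DLieG;\Vecs)\). Conversely, \(\FTO^{-1}\) sends \(\UDiffC(\DLieG;\Vecs_B)\) into \(\UDiff(\LieG;\Vecs_B) \subseteq \BUDiff(\LieG;\Vecs)\), and the \(B\)-level statements show that these two maps are mutually inverse. The boundedness of both maps is then a formal consequence of the bornology definitions: a subset of \(\BUDiff(\LieG;\Vecs)\) is bounded precisely when it is contained and bounded in \(\UDiff(\LieG;\Vecs_B)\) for a single \(B\), and applying the continuous linear map \(\FTO\) from the first step sends it to a bounded subset of \(\UDiffC(\DLieG;\Vecs_B)\), which by definition is bounded in \(\BUDiffC(\DLieG;\Vecs)\); the symmetric argument with \(\FTO^{-1}\) finishes the proof.

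I do not expect a genuine obstacle here: everything substantial is contained in \cref{thm:characterization-coefficients-ultradifferentiable}. The only points requiring care are the sequential completeness (hence Banachness) of each \(\Vecs_B\), which is exactly what guarantees that the proposition is applicable at the level of the individual local spaces, and the compatibility of the different-level Fourier transforms under \(\Vecs_B \hookrightarrow \Vecs\), which is what lets the separate \(B\)-level isomorphisms assemble into one bornological isomorphism together with its inverse.
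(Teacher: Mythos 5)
Your proposal is correct and is exactly the argument the paper intends: the paper derives the corollary directly from \cref{thm:characterization-coefficients-ultradifferentiable} applied to each Banach space \(\Vecs_B\), \(B \in \Bnds\), glued through the bornological unions defining \(\BUDiff(\LieG;\Vecs)\) and \(\BUDiffC(\DLieG;\Vecs)\). Your added care about the compatibility of the \(B\)-level transforms under \(\Vecs_B \hookrightarrow \Vecs\) is a point the paper leaves implicit, but it is the same proof.
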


Finally, we establish some connections between spaces of Beurling and Roumieu type.
These will be used in the next section.
We begin with the following lemma.

\begin{lemma}
    \label{thm:bigger-weight-function}
    Let \(\Weight\) be a weight function satisfying \((\beta_0)\) and let \(g \colon \rinterval{0}{\infty} \to \rinterval{0}{\infty}\) be a function such that \(\Weight(t) = o(g(t))\).
    Then, there exists a weight function \(\Weight*\) that also satisfies \((\beta_0)\) such that \(\Weight(t) = o(\Weight*(t))\) and \(\Weight*(t) = o(g(t))\).
\end{lemma}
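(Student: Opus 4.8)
The plan is to transfer everything to the exponential variable, where condition \((\delta)\) is plain convexity, and to build the intermediate weight as the primitive of a suitably slowed-down version of the slope of \(\WeightExp\). Write \(\WeightExp(s) = \Weight(e^s)\) and let \(\omega = \WeightExp'\) be its (nondecreasing) right derivative, so that \(\WeightExp(s) = \int_0^s \omega\) for \(s \ge 0\), \(\WeightExp(s) = 0\) for \(s \le 0\), \(\omega \ge 0\), and \(\omega(s) \to \infty\) by \((\gamma)\). Put \(G(s) = g(e^s)\), so that \(\Weight = o(g)\) reads \(\WeightExp = o(G)\). Since \((\beta_0)\) gives \(\WeightExp(s) = o(e^s)\), I first fix \(\lambda(s) \to \infty\) with \(\lambda(s)\WeightExp(s) = o(e^s)\) (for instance \(\lambda = \sqrt{e^s/\WeightExp(s)}\,\) for large \(s\)) and set the upper target \(\widehat G = \min(G,\, e^s/\lambda)\); then \(\WeightExp = o(\widehat G)\), and any convex \(\Phi\) with \(\Phi = o(\widehat G)\) will automatically satisfy \(\Phi = o(G)\) and \(\Phi = o(e^s)\). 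I will define \(\Weight*(t) = \Phi(\log t)\) for \(t \ge 1\) and \(\Weight*(t) = 0\) for \(0 \le t \le 1\); so the task becomes: produce a convex increasing \(\Phi\), vanishing for \(s \le 0\), with \(\WeightExp = o(\Phi)\), \(\Phi = o(\widehat G)\), and the doubling condition \((\alpha)\).

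For the construction I take a nondecreasing \(\mu\colon \rinterval{0}{\infty} \to \rinterval{1}{\infty}\) with \(\mu(s) \to \infty\), set \(\Omega = \mu\,\omega\) and \(\Phi(s) = \int_0^s \Omega\). As a product of nonnegative nondecreasing functions, \(\Omega\) is nondecreasing, so \(\Phi\) is convex --- that is \((\delta)\), for free. The point is to make \(\mu\) grow slowly enough for \((\alpha)\) while keeping \(\mu \to \infty\) and \(\mu = o(R)\), where \(R = \widehat G/\WeightExp \to \infty\). Using \(\underline R(s) = \inf_{u \ge s} R(u) \nearrow \infty\), I pick breakpoints \(s_1 < s_2 < \cdots \to \infty\) with \(s_{k+1} \ge 2 s_k\) and \(\underline R(s) \ge 4^k\) for \(s \ge s_k\), and let \(\mu \equiv 2^k\) on \(\rinterval{s_k}{s_{k+1}}\) (and \(\mu \equiv 1\) before \(s_1\)). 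Then \(\mu/\underline R \le 2^{-k}\) on \(\rinterval{s_k}{s_{k+1}}\), so \(\mu = o(\underline R) = o(R)\), and in particular \(\mu\WeightExp = o(\widehat G)\). The geometric spacing \(s_{k+1} \ge 2 s_k\) forces \(s/2 \ge s_{k-1}\) whenever \(s \ge s_k\), which bounds the halving ratio:
\[
 \frac{\mu(s)}{\mu(s/2)} \le \frac{2^k}{2^{k-1}} = 2 .
\]

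It remains to verify the three asymptotic relations. Convexity with \(\WeightExp(0)=0\) gives \(\WeightExp(s/2) \le \tfrac12 \WeightExp(s)\), so monotonicity of \(\mu\) yields the lower bound
\[
 \Phi(s) \ge \int_{s/2}^s \mu\,\omega \ge \mu(s/2)\bigl(\WeightExp(s) - \WeightExp(s/2)\bigr) \ge \tfrac12\, \mu(s/2)\,\WeightExp(s),
\]
and since \(\mu(s/2) \to \infty\) this gives \(\WeightExp = o(\Phi)\), i.e.\ \(\Weight = o(\Weight*)\). For the upper side, monotonicity of \(\mu\) gives \(\Phi(s) \le \mu(s)\WeightExp(s) = o(\widehat G)\), whence \(\Weight* = o(g)\) and \((\beta_0)\) as arranged; and \(\Phi \ge \WeightExp\) (as \(\mu \ge 1\)) together with \((\gamma)\) for \(\Weight\) gives \((\gamma)\) for \(\Weight*\). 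For \((\alpha)\) I use that, for convex increasing \(\Phi\) with \(\Phi(0)=0\), the bound \(\Phi(s+\log 2) \le C\Phi(s)\) for large \(s\) is equivalent to the log-derivative estimate \(\Phi'(s) \le A\,\Phi(s)\) (an elementary convexity computation); condition \((\alpha)\) for \(\Weight\) supplies \(\omega(s) \le A_0 \WeightExp(s)\), so combining the two displayed bounds,
\[
 \frac{\Omega(s)}{\Phi(s)} = \frac{\mu(s)\,\omega(s)}{\Phi(s)} \le \frac{A_0\,\mu(s)\,\WeightExp(s)}{\tfrac12\, \mu(s/2)\,\WeightExp(s)} = 2 A_0\,\frac{\mu(s)}{\mu(s/2)} \le 4 A_0,
\]
which is exactly \((\alpha)\) for \(\Weight*\).

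The single genuine obstacle is \((\alpha)\): the intermediate weight must not only lie strictly between \(\Weight\) and \(g\) in the little-\(o\) order but also be log-convex and doubling, and these regularity demands pull against one another. Log-convexity is cheap once one works with the primitive of a nondecreasing slope, but \((\alpha)\) forces the booster \(\mu\) to grow slowly (a bounded halving ratio, i.e.\ at most polynomial growth), whereas \(\Weight = o(\Weight*)\) needs \(\mu \to \infty\) and \(\Weight* = o(g)\) needs \(\mu = o(R)\). The staircase above, with spacing \(s_{k+1} \ge 2 s_k\), is precisely what reconciles these: it keeps \(\mu(s)/\mu(s/2) \le 2\) no matter how slowly \(R \to \infty\), since the breakpoints \(s_k\) may always be pushed further out.
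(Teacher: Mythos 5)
Your proof is correct, and at its core it is the same construction the paper reaches by adapting Lemma 1.6 of Braun--Meise--Taylor: there, too, the intermediate weight is produced by boosting \(\sigma\) with a slowly increasing staircase attached to a sequence of breakpoints \((x_n)\), and the modified breakpoint condition \(\sigma(t) \leq (n+1)^{-2}\min(t,g(t))\) for \(t \geq x_{n+1}\) plays exactly the role of your upper target \(\widehat{G} = \min\bigl(G, e^{s}/\lambda\bigr)\), which handles \(\tau = o(g)\) and \((\beta_0)\) simultaneously. The differences are only in execution --- you boost the derivative in the exponential variable with a dyadic staircase and geometric spacing (the halving-ratio argument for \((\alpha)\)) and you verify all axioms explicitly, where the paper defers the details to the reader.
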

\begin{proof}
    This can be shown by adapting the proof of \cite[Lemma 1.6]{BMT}.
    More precisely, one should change the condition \cite[Eq. (1)]{BMT} on the sequence \((x_n)\) from \(\int_{x_{n+1}}^{\infty} (1+t^{2})^{-1} \Weight(t) d t\leq (n+1)^{-3}\) to \(\Weight(t) \leq (n+1)^{-2} \min(t,g(t))\) for all \(t \geq x_{n+1}\).
    The details are left to the reader.
\end{proof}

\begin{remark}\label{rem:smooth-weight}
    The following variant of \Cref{thm:bigger-weight-function} holds for \(\Weight(t) = \log(1+t)\): 
    \emph{ Let \(g \colon \rinterval{0}{\infty} \to \rinterval{0}{\infty}\) be a function such that \(\log(1+t) = o(g(t))\). Then, there exists a weight function \(\Weight*\) such that \(\log(1+t) = o(\Weight*(t))\) and \(\Weight*(t) = o(g(t))\). }
\end{remark}

\begin{lemma}
    \label{thm:beurling-to-roumieu}
    A set \(\tilde{B} \subset \BSmooth(\LieG;\Vecs)\) is contained and bounded in \(\BBDiff(\LieG;\Vecs)\) if and only if there exists a weight function \(\Weight*\) with \(\Weight(t) = o(\Weight*(t))\) such that \(\tilde{B}\) is contained and bounded in \(\BRDiff[\Weight*](\LieG;\Vecs)\).
\end{lemma}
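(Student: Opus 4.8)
The plan is to transfer the statement to the group Fourier side through \cref{thm:characterization-coefficients-ultradifferentiable} and to reduce it to a single diagonalization of the decay profile of the Fourier coefficients, for which \cref{thm:bigger-weight-function} is tailor-made. First I would peel off the bornology: by the definitions of the bornologies on \(\BBDiff(\LieG;\Vecs)\) and \(\BRDiff[\Weight*](\LieG;\Vecs)\), the set \(\tilde{B}\) is contained and bounded in \(\BBDiff(\LieG;\Vecs)\) exactly when there is \(B \in \Bnds\) with \(\tilde{B}\) contained and bounded in \(\BDiff(\LieG;\Vecs_{B})\), and analogously in the Roumieu case. Since in each direction the same \(B\) serves both spaces, it suffices to prove, for the fixed Banach space \((\Vecs_{B},\norm{\Cdot}^{B})\), that \(\tilde{B}\) is bounded in \(\BDiff(\LieG;\Vecs_{B})\) if and only if there is a weight function \(\Weight*\) with \(\Weight = o(\Weight*)\) making \(\tilde{B}\) bounded in \(\RDiff[\Weight*](\LieG;\Vecs_{B})\). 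Write \(p = \norm{\Cdot}^{B}\).

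The backward implication needs no real work. Indeed, \(\RDiff[\Weight*](\LieG;\Vecs_{B}) = \varprojlim_{\rho = o(\Weight*)} \BDiff[\rho](\LieG;\Vecs_{B})\), and since \(\Weight\) is itself a weight function with \(\Weight = o(\Weight*)\), the space \(\BDiff(\LieG;\Vecs_{B})\) appears as one of the factors of this projective limit; boundedness in \(\RDiff[\Weight*](\LieG;\Vecs_{B})\) therefore forces boundedness in \(\BDiff(\LieG;\Vecs_{B})\).

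For the forward implication I would argue on the Fourier side. By \cref{thm:characterization-coefficients-ultradifferentiable} applied to the weight \(\Weight\) and the Banach space \(\Vecs_{B}\), boundedness of \(\tilde{B}\) in \(\BDiff(\LieG;\Vecs_{B}) \cong \BDiffC(\DLieG;\Vecs_{B}) = \varprojlim_{h \to 0^+}\SDiffC{h}(\DLieG;\Vecs_{B})\) means exactly that
\[
    M_h \coloneqq \sup_{f \in \tilde{B}}\ \sup_{\IrredRep \in \DLieG} \pHSnorm(\FT{f}(\IrredRep))\, e^{\frac{1}{h}\Weight(\sqrt{\IrredRepE})} < \infty, \qquad \text{for every } h > 0.
\]
Set \(a_{\IrredRep} \coloneqq \sup_{f \in \tilde{B}} \pHSnorm(\FT{f}(\IrredRep))\), so that \(a_{\IrredRep} \leq M_h\, e^{-\frac{1}{h}\Weight(\sqrt{\IrredRepE})}\) for all \(h > 0\) and all \(\IrredRep\). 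Discarding the harmless indices with \(a_{\IrredRep} = 0\), I would form
\[
    g_0(t) \coloneqq \inf \{ -\log a_{\IrredRep} \mid \IrredRep \in \DLieG,\ \sqrt{\IrredRepE} \geq t \}, \qquad t \geq 0,
\]
which is finite and bounded below since \(-\log a_{\IrredRep} \geq \frac{1}{h}\Weight(\sqrt{\IrredRepE}) - \log M_h \geq -\log M_1\). Monotonicity of \(\Weight\) turns the same estimate into \(g_0(t) \geq \frac{1}{h}\Weight(t) - \log M_h\) for every \(h > 0\); as \(\Weight(t) \to \infty\) by \((\gamma)\), this gives \(\liminf_{t \to \infty} g_0(t)/\Weight(t) \geq 1/h\) for all \(h\), i.e.\ \(\Weight = o(g_0)\). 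Taking \(g \coloneqq g_0 + \log M_1 + 1 \colon \rinterval{0}{\infty} \to \rinterval{0}{\infty}\), a strictly positive function with \(\Weight = o(g)\) and \(a_{\IrredRep} \leq e M_1\, e^{-g(\sqrt{\IrredRepE})}\), I would invoke \cref{thm:bigger-weight-function} (permissible because \(\Weight\) satisfies \((\beta_0)\) in the Beurling case) to obtain a weight function \(\Weight*\) with \(\Weight = o(\Weight*)\) and \(\Weight* = o(g)\). From \(\Weight* = o(g)\) there is \(t_0\) with \(\Weight*(t) \leq g(t)\) for \(t \geq t_0\), so \(a_{\IrredRep}\, e^{\Weight*(\sqrt{\IrredRepE})} \leq e M_1\) whenever \(\sqrt{\IrredRepE} \geq t_0\); since \(\{\IrredRep \mid \sqrt{\IrredRepE} < t_0\}\) is finite by \eqref{equ:summability-eigenvalues}, we conclude \(\sup_{\IrredRep} a_{\IrredRep}\, e^{\Weight*(\sqrt{\IrredRepE})} < \infty\). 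Thus \(\FTO(\tilde{B})\) is bounded in \(\SDiffC[\Weight*]{1}(\DLieG;\Vecs_{B})\), and a routine comparison (using \(\rho = o(\Weight*)\)) promotes this to boundedness in \(\RDiffC[\Weight*](\DLieG;\Vecs_{B}) = \varprojlim_{\rho = o(\Weight*)}\BDiffC[\rho](\DLieG;\Vecs_{B})\). Transporting back through \cref{thm:characterization-coefficients-ultradifferentiable}, now applied to the weight \(\Weight*\), shows that \(\tilde{B}\) is bounded in \(\RDiff[\Weight*](\LieG;\Vecs_{B})\), as desired.

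The main obstacle is precisely this forward direction, and within it the construction of the comparison function \(g\): one must distill the family of bounds \(a_{\IrredRep} \leq M_h\, e^{-\frac{1}{h}\Weight(\sqrt{\IrredRepE})}\)—one for each \(h\), with entirely uncontrolled constants \(M_h\)—into a single decay rate \(e^{-g(\sqrt{\IrredRepE})}\) that beats every \(e^{-\frac{1}{h}\Weight}\). The infimum defining \(g_0\) accomplishes this, after which \cref{thm:bigger-weight-function} regularizes \(g\) into a genuine weight function \(\Weight*\); the only delicate auxiliary points are the lower bound ensuring \(g_0\) is finite and the finiteness of the set of small eigenvalues.
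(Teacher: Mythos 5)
Your proposal is correct and takes essentially the same route as the paper's proof: pass to the Fourier side, compress the countable family of Beurling bounds into a single comparison function $g$ with $\Weight(t)=o(g(t))$ and $a_{\IrredRep}\le C e^{-g\left(\sqrt{\IrredRepE}\right)}$, regularize $g$ into a weight function $\Weight*$ via \cref{thm:bigger-weight-function}, and transport back --- the paper merely constructs $g$ as $\max_{n\le t}\,(n\Weight(t)-\log C_n)$ from integer-indexed bounds $C_n$ rather than as your tail-infimum of $-\log a_{\IrredRep}$. The only micro-caveat in your version is the degenerate case where all but finitely many $a_{\IrredRep}$ vanish (then $g_0\equiv+\infty$ on a tail, so \cref{thm:bigger-weight-function} does not literally apply); that case is trivial, since any weight $\Weight*$ with $\Weight(t)=o(\Weight*(t))$ then works.
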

\begin{proof}
    It suffices to show that for every bounded set \(\tilde{B} \subseteq \BBDiff(\LieG;\Vecs)\) there is a weight function \(\Weight*\) with \(\Weight(t) = o(\Weight*(t))\) such that \(\tilde{B}\) is contained and bounded in \(\BRDiff[\Weight*](\LieG;\Vecs)\), as the other implication is clear.
    By \cref{thm:characterization-coefficients-bornological}, we obtain that \(\FTO (\tilde{B})\) is bounded in \(\BDiffC_b(\LieG;\Vecs)\).
    Hence, there is \(B \in \Bnds\) such that for all \(n \in \N\)
    \[
        C_{n} 
        = \sup_{f \in \tilde{B}}\sup_{\IrredRep \in \DLieG} \HSnorm{\FT f(\IrredRep)}^{B} e^{n \Weight\left(\sqrt{\IrredRepE}\right)} 
        < \infty
        .
    \]
    We define \(g(t) = \max_{n \leq t} n\Weight(t) - \log C_{n}\) for \(t \geq 0\).
    Note that \(\Weight(t) = o(g(t))\).
    Let \(\Weight*\) be as in \cref{thm:bigger-weight-function}.
    Then, \(\FTO(\tilde{B})\) is contained and bounded in \(\RDiffC[\Weight*](\DLieG;\Vecs_B)\).
    The result follows from another application of \cref{thm:characterization-coefficients-bornological}.
\end{proof}

\begin{lemma}
    \label{thm:smooth-to-roumieu}
    For every bounded set \(\tilde{B} \subset \BSmooth(\LieG;\Vecs)\) there exists a weight function \(\Weight*\)  such that \(\tilde{B}\) is contained and bounded in \(\BRDiff[\Weight*](\LieG;\Vecs)\).
\end{lemma}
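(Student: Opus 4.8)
The plan is to run the argument of \Cref{thm:beurling-to-roumieu} one layer lower, starting from plain smoothness instead of Beurling ultradifferentiability; the role played there by \Cref{thm:bigger-weight-function} will here be taken over by its logarithmic counterpart \Cref{rem:smooth-weight}. Since \(\tilde B\) is bounded in \(\BSmooth(\LieG;\Vecs)\), there is \(B \in \Bnds\) such that \(\tilde B\) is contained and bounded in \(\Smooth(\LieG;\Vecs_B)\), with \(\Vecs_B\) a Banach space. Applying \Cref{thm:characterization-coefficients-smooth} to \(\Vecs_B\) transports this to the Fourier side: \(\FTO(\tilde B)\) is bounded in \(\SmoothC(\DLieG;\Vecs_B)\), which unwinds to
\[
    C_n := \sup_{f \in \tilde B}\sup_{\IrredRep \in \DLieG} \HSnorm{\FT f(\IrredRep)}^{B}(1+\IrredRepE)^{n} < \infty, \qquad n \in \N.
\]
After enlarging \(B\) we may assume \(C_0 \le 1\).

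Next I would pack this polynomial decay into a single profile. Writing \(t=\sqrt{\IrredRepE}\), set
\[
    g(t) = \max_{n \le t}\bigl(n\log(1+t^2) - \log C_n\bigr), \qquad t \ge 0,
\]
so that \(g \ge 0\) and \(e^{-g(t)} = \inf_{n \le t} C_n(1+t^2)^{-n}\). Because \(g(t) \ge m\log(1+t^2) - \log C_m\) for \(t \ge m\) and \(\log(1+t^2)/\log(1+t) \to 2\), one has \(\log(1+t) = o(g(t))\). \Cref{rem:smooth-weight} then yields a (bona fide) weight function \(\Weight*\) with \(\log(1+t) = o(\Weight*(t))\) and \(\Weight*(t) = o(g(t))\).

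It then remains to see that \(\FTO(\tilde B)\) is contained and bounded in \(\RDiffC[\Weight*](\DLieG;\Vecs_B)\) and to pull back. The defining estimate for the \(C_n\) gives, for all \(f \in \tilde B\) and \(\IrredRep \in \DLieG\),
\[
    \HSnorm{\FT f(\IrredRep)}^{B} \le \inf_{n \in \N} C_n(1+t^2)^{-n} \le \inf_{n \le t} C_n(1+t^2)^{-n} = e^{-g(t)}.
\]
Recall that boundedness in \(\RDiffC[\Weight*](\DLieG;\Vecs_B) = \varprojlim_{\kappa = o(\Weight*)}\BDiffC[\kappa](\DLieG;\Vecs_B)\) amounts to the finiteness of \(\sup_{f,\IrredRep}\HSnorm{\FT f(\IrredRep)}^{B} e^{\frac{1}{h}\kappa(\sqrt{\IrredRepE})}\) for every weight function \(\kappa\) with \(\kappa = o(\Weight*)\) and every \(h>0\). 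Since \(\kappa(t) = o(\Weight*(t)) = o(g(t))\), the exponent \(\frac{1}{h}\kappa(t) - g(t)\) tends to \(-\infty\), hence is bounded above, and the displayed estimate delivers the required finiteness. Finally, \Cref{thm:characterization-coefficients-bornological}, which holds verbatim for any weight function and in particular for \(\Weight*\), shows that \(\tilde B = \FTO^{-1}(\FTO(\tilde B))\) is contained and bounded in \(\BRDiff[\Weight*](\LieG;\Vecs)\), as desired.

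The only point that is not bookkeeping inherited from \Cref{thm:beurling-to-roumieu} is the passage through the logarithmic profile: plain smoothness corresponds to decay governed by \(\log(1+t)\), which is not itself a weight function as it violates \((\gamma)\), so one cannot simply invoke \Cref{thm:bigger-weight-function}. The main obstacle is thus borne entirely by \Cref{rem:smooth-weight}, whose task is precisely to manufacture a genuine weight function wedged strictly between \(\log(1+t)\) and the decay profile \(g\); once it is at hand, the remaining estimates are the same convex-duality computation as in the Beurling-to-Roumieu case.
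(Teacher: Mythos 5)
Your proof is correct and follows essentially the same route as the paper's: transfer to the Fourier side via \cref{thm:characterization-coefficients-smooth}, encode the polynomial decay in the profile \(g\), invoke \cref{rem:smooth-weight} to produce \(\Weight*\), and pull back with \cref{thm:characterization-coefficients-bornological}. The only differences are cosmetic (your normalization \(t=\sqrt{\IrredRepE}\) with \(\log(1+t^2)\), and your explicit verification of boundedness in \(\RDiffC[\Weight*](\DLieG;\Vecs_B)\), which the paper leaves implicit).
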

\begin{proof}
    By \cref{thm:characterization-coefficients-smooth}, there is \(B \in \Bnds\) such that \(\FTO (\tilde{B})\) is contained and bounded in \(\SmoothC(\DLieG;\Vecs_B)\). Hence, for all \(n \in \N\)
    \[
        C_{n} 
        = \sup_{f \in \tilde{B}}\sup_{\IrredRep \in \DLieG} \HSnorm{\FT f(\IrredRep)}^{B} (1+\IrredRepE)^n =
        \sup_{f \in \tilde{B}}\sup_{\IrredRep \in \DLieG} \HSnorm{\FT f(\IrredRep)}^{B} e^{n \log\left(1+\IrredRepE\right)} 
        < \infty.
    \]
    We define \(g(t) = \max_{n \leq t} n\log(1+t) - \log C_{n}\) for \(t \geq 0\).
    Note that \(\log(1+t) = o(g(t))\).       Let \(\Weight*\) be as in \cref{rem:smooth-weight}.   Then, \(\FTO(\tilde{B})\) is contained and bounded in \(\RDiffC[\Weight*](\DLieG;\Vecs_B)\).
    The result now follows from \cref{thm:characterization-coefficients-bornological}.
\end{proof}

We have the following variant of \cref{thm:bigger-weight-function}.

\begin{lemma}[{\cite[Lemma 1.7 and Remark 1.8(1)]{BMT}}]
    \label{thm:smaller-weight-function}
    Let \(\Weight\) be a weight function and let \(g \colon \rinterval{0}{\infty} \to \rinterval{0}{\infty}\) be a function such that \(g(t)=o(\Weight(t))\).
    Then, there exists a weight function \(\Weight*\) such that \(g(t) = o(\Weight*(t))\) and \(\Weight*(t) = o(\Weight(t))\).
\end{lemma}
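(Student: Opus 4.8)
Passing to the exponential scale neutralizes condition \((\delta)\). Writing \(\varphi = \WeightExp\) (so that \(\WeightExp(t) = \Weight(e^t)\)) and \(\psi(t) = \Weight*(e^t)\), the datum \(\varphi\) is convex, increasing, vanishes on \(\rinterval{-\infty}{0}\), and is superlinear (\(\varphi(t)/t \to \infty\), which is \((\gamma)\)); conversely, any convex increasing \(\psi\) with \(\psi(t) = O(e^t)\) (this is \((\beta)\)), \(\psi(t + \log 2) = O(\psi(t))\) (this is \((\alpha)\)), and \(\psi(t)/t \to \infty\) yields a weight function \(\Weight* = \psi \circ \log\). Setting \(\widetilde g(t) = g(e^t)\), the hypothesis reads \(\widetilde g = o(\varphi)\) and the two conclusions read \(\widetilde g = o(\psi)\) and \(\psi = o(\varphi)\). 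The plan is thus to slip a convex function \(\psi\) between \(\widetilde g\) and \(\varphi\). Since the geometric mean \(\sqrt{\varphi \widetilde g}\) already satisfies both \(\widetilde g = o(\sqrt{\varphi \widetilde g})\) and \(\sqrt{\varphi \widetilde g} = o(\varphi)\), this is the target size for \(\psi\); the whole difficulty is to realize such a size by a genuinely \emph{convex} function.

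The device I would use to obtain convexity for free is to build \(\psi\) as a supremum of affine functions. First I would pass to the increasing majorant \(G(t) = \sup_{u \le t} \widetilde g(u)\), which is still \(o(\varphi)\) because \(\varphi \nearrow \infty\). Then I would fix a sparse sequence \(s_0 < s_1 < \cdots \to \infty\) and coefficients \(c_n \downarrow 0\) of order \(c_n \asymp \sqrt{G(s_n)/\varphi(s_n)}\), and set \(\psi = \sup_n c_n L_{s_n}\), where \(L_s \le \varphi\) is a supporting line of the convex function \(\varphi\) touching it at \(s\). Being a supremum of affine functions, \(\psi\) is automatically convex, which disposes of \((\delta)\) at no cost; and since each \(c_n L_{s_n} \le \varphi\), we get \(\psi \le \varphi\), whence \((\beta)\) is inherited from \(\Weight\). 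To secure superlinearity \((\gamma)\) I would, if necessary, replace \(\psi\) by \(\max(\psi, \chi)\) for a fixed superlinear convex \(\chi = o(\varphi)\) (available because \(\varphi\) is superlinear), and \((\alpha)\) would then be inherited from \((\alpha)\) for \(\varphi\) through the two-sided comparison of \(\psi\) with \(\varphi/\sqrt{\theta}\) for a slowly increasing \(\theta \to \infty\).

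The heart of the matter is calibrating \(s_n\) and \(c_n\) so that both little-\(o\) relations hold at once. For the \emph{lower} bound, on \(\interval{s_n}{s_{n+1}}\) one has \(\psi \ge c_n L_{s_n} \ge c_n \varphi(s_n)\); choosing the spacing by \(G(s_{n+1}) = 2 G(s_n)\) forces \(G(t) \le 2 G(s_n)\) there, which is \(o(c_n \varphi(s_n))\), and hence \(\widetilde g \le G = o(\psi)\). The \emph{upper} bound \(\psi = o(\varphi)\) is the delicate one: the hard part will be to check that no supporting line \(c_m L_{s_m}\) climbs back up to the order of \(\varphi(t)\) at a far-away \(t\), which is controlled by the superlinearity of \(\varphi\) (giving \(L_{s_m}(t)/\varphi(t) \to 0\)) together with \(c_m \downarrow 0\). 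This uniform estimate, and the verification that \(s_{n+1} = G^{-1}(2 G(s_n))\) indeed tends to infinity while \(c_n \to 0\), is the only genuinely technical point, and it runs parallel to the construction behind \cref{thm:bigger-weight-function}. As the statement is precisely \cite[Lemma 1.7 and Remark 1.8(1)]{BMT}, I would in the end simply invoke that reference, the details being obtained by adapting its proof.
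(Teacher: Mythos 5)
The paper offers no proof of this lemma at all: it is stated as a direct citation of \cite[Lemma 1.7 and Remark 1.8(1)]{BMT}, so your closing move of simply invoking that reference is exactly what the paper does, and to that extent the proposal is in order. Be warned, however, that the construction you sketch in between would not survive as a standalone proof in its present form: the step you defer, condition \((\alpha)\) for \(\psi=\sup_n c_n L_{s_n}\), is a genuine obstruction and not something ``inherited'' from \(\varphi\). Your touch points \(s_n\) are calibrated only to the doubling of \(G\), so they may be extremely sparse compared with the growth of \(\varphi\); moreover, \((\alpha)\) for \(\varphi\) does not prevent \(\varphi'(s_n)\log 2>\varphi(s_n)\) at such points (take \(\sigma\) with plateaus and short steep rises), in which case \(c_nL_{s_n}(s_n-\log 2)<0\) and \(\psi(s_n-\log 2)\) is governed by the much smaller terms coming from \(s_{n-1}\). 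One can then arrange \(\varphi\) and \(g\) so that
\[
\frac{\psi(s_n)}{\psi(s_n-\log 2)}\;\gtrsim\;\frac{1}{1+C(s_n-s_{n-1})}\sqrt{\frac{\varphi(s_n)}{\varphi(s_{n-1})}}\,,
\]
which is unbounded when \(\varphi\) grows fast across the gaps. In particular, \(\psi\) is \emph{not} two-sidedly comparable to \(\varphi/\sqrt{\theta}\) with \(\theta\) slowly varying, so \((\alpha)\) does not follow as you claim; repairing this requires inserting touch points adapted to \(\varphi\) as well as to \(g\), which is, in effect, what the construction behind \cite[Lemma 1.7]{BMT} (and behind \cref{thm:bigger-weight-function}) does. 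Since you ultimately rest the proof on the citation, this does not invalidate the proposal, but the sketch should not be mistaken for a complete alternative argument.
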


\begin{lemma}
    \label{thm:roumieu-inductive}
    A set \(\tilde{B} \subset \SmoothC(\DLieG;\Vecs)\) is contained and bounded in \(\BRDiffC(\DLieG;\Vecs)\) if and only if there exist \(h > 0\) and \(B \in \Bnds\) such that \(\tilde{B}\) is contained and bounded in \(\SDiffC{h}(\DLieG;\Vecs_{B})\).
\end{lemma}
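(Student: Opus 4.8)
The plan is to peel off the bornological and vector-valued layers and reduce everything to a statement about a single numerical sequence, which I will then settle by a diagonal construction of an auxiliary weight. By the definition of the natural convex vector bornology on \(\BRDiffC(\DLieG;\Vecs)=\bigcup_{B\in\Bnds}\RDiffC(\DLieG;\Vecs_{B})\), a set \(\tilde B\) is contained and bounded in \(\BRDiffC(\DLieG;\Vecs)\) exactly when there is some \(B\in\Bnds\) with \(\tilde B\) contained and bounded in \(\RDiffC(\DLieG;\Vecs_{B})\); recall \(\Vecs_{B}\) is a Banach space. So it suffices to prove, for a fixed \(B\), that \(\tilde B\) is bounded in \(\RDiffC(\DLieG;\Vecs_{B})\) if and only if there is \(h>0\) with \(\tilde B\) bounded in \(\SDiffC{h}(\DLieG;\Vecs_{B})\). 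Putting \(a_{\IrredRep}=\sup_{f\in\tilde B}\HSnorm{f_{\IrredRep}}^{B}\) and unwinding the iterated projective limit defining \(\RDiffC(\DLieG;\Vecs_{B})\), the left-hand boundedness reads \(\sup_{\IrredRep\in\DLieG}a_{\IrredRep}e^{\frac{1}{h'}\Weight*(\sqrt{\IrredRepE})}<\infty\) for every weight function \(\Weight*\) with \(\Weight*=o(\Weight)\) and every \(h'>0\), whereas the right-hand one reads \(\sup_{\IrredRep\in\DLieG}a_{\IrredRep}e^{\frac{1}{h}\Weight(\sqrt{\IrredRepE})}<\infty\) for some \(h>0\). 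Note each \(a_{\IrredRep}\) is finite, since \(\RDiffC(\DLieG;\Vecs_{B})\) embeds continuously into \(\SmoothC(\DLieG;\Vecs_{B})\).

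The implication from the single-step bound to the projective one is routine. Given \(\Weight*=o(\Weight)\) and \(h'>0\), the relation \(\Weight*=o(\Weight)\) yields \(\tfrac{1}{h'}\Weight*(t)\le\tfrac{1}{h}\Weight(t)\) for all large \(t\), and hence \(\tfrac{1}{h'}\Weight*(t)\le\tfrac{1}{h}\Weight(t)+C\) for all \(t\ge 0\) with some constant \(C\) (using continuity of \(\Weight*\) on the remaining compact \(t\)-range). Consequently \(e^{\frac{1}{h'}\Weight*(\sqrt{\IrredRepE})}\le e^{C}e^{\frac{1}{h}\Weight(\sqrt{\IrredRepE})}\), so the projective family of bounds follows from the single-step one with a uniform constant.

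The converse is the crux and where I expect the main obstacle. Assume \(\tilde B\) is bounded in \(\RDiffC(\DLieG;\Vecs_{B})\) but suppose, for contradiction, that \(\sup_{\IrredRep}a_{\IrredRep}e^{\frac{1}{m}\Weight(\sqrt{\IrredRepE})}=\infty\) for every \(m\in\N\). For each \(m\) I can then choose \(\IrredRep_{m}\) with \(a_{\IrredRep_{m}}>m\,e^{-\frac{1}{m}\Weight(\sqrt{\IrredRepE[\IrredRep_m]})}\). Since \eqref{equ:summability-eigenvalues} forces \(\{\IrredRep:\IrredRepE\le R\}\) to be finite for every \(R\) and each \(a_{\IrredRep}\) is finite, the eigenvalues \(\IrredRepE[\IrredRep_m]\) cannot stay bounded: otherwise some fixed \(\IrredRep'\) would be selected infinitely often, forcing the impossible \(a_{\IrredRep'}>m\,e^{-\frac{1}{m}\Weight(\sqrt{\IrredRepE[\IrredRep']})}\to a_{\IrredRep'}\). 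Passing to a subsequence I may thus assume \(t_{m}:=\sqrt{\IrredRepE[\IrredRep_m]}\) strictly increases to \(\infty\). Now define \(g(t)=\tfrac{1}{m}\Weight(t)\) for \(t_{m}\le t<t_{m+1}\) (and, say, \(g(t)=\Weight(t)\) for \(t<t_1\)); then \(g(t)=o(\Weight(t))\) because \(g(t)/\Weight(t)=1/m\to 0\). \Cref{thm:smaller-weight-function} supplies a weight function \(\Weight*\) with \(g=o(\Weight*)\) and \(\Weight*=o(\Weight)\), whence \(\Weight*(t_{m})\ge g(t_{m})=\tfrac{1}{m}\Weight(t_{m})\) for all large \(m\). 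For such \(m\),
\[
    a_{\IrredRep_{m}}\,e^{\Weight*(t_{m})}
    > m\,e^{\Weight*(t_{m})-\frac{1}{m}\Weight(t_{m})}
    \ge m
    .
\]
Therefore \(\sup_{\IrredRep}a_{\IrredRep}e^{\Weight*(\sqrt{\IrredRepE})}=\infty\), which (taking \(h'=1\)) contradicts the boundedness of \(\tilde B\) in \(\RDiffC(\DLieG;\Vecs_{B})\). Hence some \(h>0\) works, which is exactly the claimed single-step bound. The delicate point is precisely this diagonal construction: selecting the exhausting sequence \((\IrredRep_{m})\) with \(t_m\to\infty\) and manufacturing, via \cref{thm:smaller-weight-function}, one subordinate weight \(\Weight*\lessapprox\Weight\) that nevertheless recaptures the vanishing fractions \(\tfrac{1}{m}\Weight(t_{m})\) at the critical eigenvalues.
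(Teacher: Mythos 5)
Your proposal is correct and takes essentially the same route as the paper's proof: fix \(B \in \Bnds\) with \(\tilde{B}\) bounded in \(\RDiffC(\DLieG;\Vecs_B)\), suppose for contradiction that no \(h>0\) works, extract a sequence \((\IrredRep_m)_m\) with \(\IrredRepE[\IrredRep_m] \nearrow \infty\) witnessing the failure, build from it an auxiliary function \(g = o(\Weight)\), and invoke \cref{thm:smaller-weight-function} to produce a weight \(\Weight* = o(\Weight)\) whose associated seminorm (with \(h'=1\)) is infinite on \(\tilde{B}\), contradicting boundedness in \(\RDiffC(\DLieG;\Vecs_B)\). The only differences are cosmetic: you take \(g = \tfrac{1}{m}\Weight\) piecewise, while the paper takes the piecewise-constant \(g(t) = \max\{0, -\log \sup_{T \in \tilde{B}} \HSnorm{T_{\IrredRep_m}}^{B}\}\), and you spell out (via \eqref{equ:summability-eigenvalues}) why the eigenvalues \(\IrredRepE[\IrredRep_m]\) can be taken to increase to infinity, a point the paper leaves implicit.
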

\begin{proof}
    It suffices to show that for every bounded set \(\tilde{B} \subseteq \BRDiffC(\DLieG;\Vecs)\) there exist \(h > 0\) and \(B \in \Bnds\) such that \(\tilde{B}\) is contained and bounded in \(\SDiffC{h}(\DLieG;\Vecs_{B})\), as the other implication is trivial.
    There is \(B \in \Bnds\) such that \(\tilde{B}\) is contained and bounded in \(\RDiffC(\DLieG;\Vecs_B)\).
    Suppose that for all \(h >0\) the set \(\tilde{B}\) is not contained and bounded in \(\SDiffC{h}(\DLieG;\Vecs_{B})\).
    Then, there is a sequence \((\IrredRep_{n})_{n \in \N} \subset \DLieG \) with \(\IrredRepE[\IrredRep_{n}] \nearrow \infty\) such that
    \[
        \sup_{ T = (T_{\IrredRep})_{\IrredRep} \in \tilde{B}} \HSnorm{T_{\IrredRep_{n}}}^{B} e^{\frac{1}{n}\Weight\left(\sqrt{\IrredRepE[\IrredRep_{n}]}\right)}
        \geq 1
        .
    \]
    Now define \(g(t) = \max\{ 0, - \log \sup_{ T = (T_{\IrredRep})_{\IrredRep} \in \tilde{B}} \HSnorm{T_{\IrredRep_{n}}}^{B} \}\) for \(\sqrt{\IrredRepE[\IrredRep_{n}]} \leq t < \sqrt{\IrredRepE[\IrredRep_{n+1}]}\).
    Note that \(g(t) = o(\Weight(t))\).
    Let \(\Weight*\) be as in \cref{thm:smaller-weight-function}.
    Then,
    \[
        \sup_{T = (T_{\IrredRep})_{\IrredRep} \in \tilde{B}} \sup_{\IrredRep \in \DLieG} \HSnorm{(T_{\IrredRep})_{\IrredRep}}^{B} e^{\Weight*\left(\sqrt{\IrredRepE}\right)}
        \geq \sup_{n \in \N} \sup_{T = (T_{\IrredRep})_{\IrredRep} \in \tilde{B}} \HSnorm{T_{\IrredRep_{n}}}^{B}e^{g\left(\sqrt{\IrredRepE[\IrredRep_{n}]}\right)} e^{\Weight*\left(\sqrt{\IrredRepE[\IrredRep_{n}]}\right) - g\left(\sqrt{\IrredRepE[\IrredRep_{n}]}\right)}
        = \infty
        ,
    \]
    which contradicts the assumption that \(\tilde{B}\) is bounded in \(\RDiffC(\DLieG;\Vecs_{B})\). 
\end{proof}

\section{Proof of the main result}
\label{sec:main-proof}

We have done all the necessary work in preparation to show \cref{thm:factorization-vectors}.
Our proof is based on the following lemma, which may be considered as the bounded strong factorization property for the \(\Vecs\)-valued discrete space \(\BRDiffC(\DLieG;\Vecs)\).

\begin{lemma}
    \label{thm:factorization-functions-roumieu}
    Let \(\tilde{B} \subset \BRDiffC(\DLieG;\Vecs)\) be bounded. For every \(\IrredRep \in \DLieG\) there is \(C_{\IrredRep} > 0\) such that
    \begin{enumerate}
        \item \((C^{-1}_{\IrredRep}\Id_{\IrredRepS})_{\IrredRep} \in \RDiffC(\DLieG)\);
        \item \(\{(C_{\IrredRep} T_{\IrredRep})_{\IrredRep} \mid (T_{\IrredRep})_{\IrredRep} \in \tilde{B}\}\) is contained and bounded in \(\BRDiffC(\DLieG;\Vecs)\).
    \end{enumerate}
\end{lemma}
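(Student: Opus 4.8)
The plan is to extract from the Roumieu boundedness of $\tilde{B}$ a single exponential decay scale on the Fourier side and then split it into two exponential factors of the same shape. First I would apply \cref{thm:roumieu-inductive}: since $\tilde{B}$ is bounded in $\BRDiffC(\DLieG;\Vecs)$, there are $h>0$, $B\in\Bnds$, and $M\geq 0$ with
\[
    \HSnorm{T_{\IrredRep}}^{B} \leq M e^{-\frac{1}{h}\Weight(\sqrt{\IrredRepE})},
    \qquad (T_{\IrredRep})_{\IrredRep}\in\tilde{B},\ \IrredRep\in\DLieG.
\]
Having a \emph{single} scale $h$ available (against the full weight $\Weight$) is exactly what the Roumieu setting buys us. The natural candidate is then the scalar multiplier $C_{\IrredRep} = e^{\frac{1}{2h}\Weight(\sqrt{\IrredRepE})}\geq 1$, depending on $\IrredRep$ only through the Laplace eigenvalue $\IrredRepE$.

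Condition (ii) is the easy half. Since $C_{\IrredRep}$ is a positive scalar, $\HSnorm{C_{\IrredRep}T_{\IrredRep}}^{B} = C_{\IrredRep}\HSnorm{T_{\IrredRep}}^{B}$, and the decay bound gives
\[
    C_{\IrredRep}\HSnorm{T_{\IrredRep}}^{B} e^{\frac{1}{2h}\Weight(\sqrt{\IrredRepE})}
    \leq M e^{\left(\frac{1}{2h}-\frac{1}{h}+\frac{1}{2h}\right)\Weight(\sqrt{\IrredRepE})}
    = M,
\]
uniformly over $\tilde{B}$ and $\IrredRep$. By \cref{thm:roumieu-inductive} this places $\{(C_{\IrredRep}T_{\IrredRep})_{\IrredRep}\}$ as a bounded set in $\SDiffC{2h}(\DLieG;\Vecs_{B})$, hence it is bounded in $\BRDiffC(\DLieG;\Vecs)$.

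For condition (i) I would check directly that $(C_{\IrredRep}^{-1}\Id_{\IrredRepS})_{\IrredRep}\in\RDiffC(\DLieG)=\varprojlim_{\Weight*=o(\Weight)}\BDiffC[\Weight*](\DLieG)$. Fix any weight $\Weight*$ with $\Weight*=o(\Weight)$ and any $h'>0$. Using $\HSnorm{C_{\IrredRep}^{-1}\Id_{\IrredRepS}} = C_{\IrredRep}^{-1}\sqrt{\IrredRepD}$ together with $\IrredRepD\lesssim\IrredRepEP^{\Dim}$ from Weyl's law \eqref{equ:summability-eigenvalues}, condition $(\gamma)$ lets me absorb the polynomial factor $\sqrt{\IrredRepD}$ into $e^{\epsilon\Weight(\sqrt{\IrredRepE})}$ for any fixed $\epsilon>0$. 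Choosing $\epsilon<\frac{1}{2h}$, the quantity
\[
    C_{\IrredRep}^{-1}\sqrt{\IrredRepD}\, e^{\frac{1}{h'}\Weight*(\sqrt{\IrredRepE})}
\]
is bounded by a constant times $e^{-\left(\frac{1}{2h}-\epsilon\right)\Weight(\sqrt{\IrredRepE})+\frac{1}{h'}\Weight*(\sqrt{\IrredRepE})}$, whose exponent tends to $-\infty$ because $\Weight*=o(\Weight)$. Hence the supremum over $\IrredRep$ is finite for every such $\Weight*$ and $h'$, which is precisely membership in $\RDiffC(\DLieG)$.

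The only conceptual step — and thus the main obstacle — is recognizing that the Roumieu boundedness of $\tilde{B}$ delivers decay against the \emph{full} weight $\Weight$ at a fixed rate, and that spending a fixed fraction of this full weight is exactly what makes the reciprocal multiplier $C_{\IrredRep}^{-1}$ outgrow every smaller weight $\Weight*=o(\Weight)$ in the projective description of $\RDiffC(\DLieG)$, while simultaneously leaving a positive fraction $\frac{1}{2h}$ of decay to keep the rescaled family inside the single Banach space $\Vecs_{B}$ and the single scale $\SDiffC{2h}$. Everything else is the bookkeeping indicated above; had $\tilde{B}$ only been bounded against a weight strictly smaller than $\Weight$, no such fraction of $\Weight$ would be available and the argument would break.
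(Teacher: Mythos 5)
Your proposal is correct and follows essentially the same route as the paper's proof: both use \cref{thm:roumieu-inductive} to extract a single scale \(h\) and Banach disc \(B\in\Bnds\), take \(C_{\IrredRep}=e^{\frac{1}{h'}\Weight(\sqrt{\IrredRepE})}\) for some \(h'>h\) (your choice \(h'=2h\)), absorb \(\sqrt{\IrredRepD}\) via Weyl's law \eqref{equ:summability-eigenvalues} and condition \((\gamma)\) to get \((i)\), and reapply \cref{thm:roumieu-inductive} in its trivial direction for \((ii)\). The only cosmetic difference is that the paper certifies \((i)\) by exhibiting decay against a fixed fraction of \(\Weight\) itself (which automatically dominates every \(\Weight^{*}=o(\Weight)\)), whereas you test membership against each \(\Weight^{*}\) and \(h'>0\) directly.
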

\begin{proof}
    Employing \cref{thm:roumieu-inductive}, we find \(B \in \Bnds\) and \(h > 0\) such that \(\tilde{B}\) is contained and bounded in \(\SDiffC{h}(\DLieG;\Vecs_B)\).
    Fix \(h' > h\) and define \(C_{\IrredRep} = e^{\frac{1}{h'}\Weight\left(\sqrt{\IrredRepE}\right)}\) for \(\IrredRep \in \DLieG\).
    Let \(h'' > h'\).
    By \eqref{equ:summability-eigenvalues} and the condition \((\gamma)\), there is \(C >0\) such that
    \[
        \HSnorm{C_{\IrredRep}^{-1}\Id_{\IrredRepS}} 
        \leq Ce^{-\frac{1}{h''}\Weight\left(\sqrt{\IrredRepE}\right)}
        .
    \]
    Thus, \( (C_{\IrredRep}^{-1}\Id_{\IrredRepS})_{\IrredRep} \in \RDiffC(\DLieG)\).
    Furthermore, we have, for all \(T = (T_{\IrredRep})_{\IrredRep} \in \tilde{B}\)
    \[
        \HSnorm{(C_{\IrredRep} T_{\IrredRep})_{\IrredRep}}^{B}
        = \HSnorm{T_{\IrredRep}}^{B} e^{\frac{1}{h'}\Weight\left(\sqrt{\IrredRepE}\right)} 
        \leq \sup_{T \in \tilde{B}} \norm{T}_{\Weight,h}^{B} e^{-\left(\frac{1}{h} - \frac{1}{h'}\right)\Weight\left(\sqrt{\IrredRepE}\right)}
        .
    \]
    Hence, by another application of \cref{thm:roumieu-inductive}, \(\{(C_{\IrredRep} T_{\IrredRep})_{\IrredRep} \mid (T_{\IrredRep})_{\IrredRep} \in \tilde{B}\}\) is contained and bounded in \(\BRDiffC(\DLieG;\Vecs)\).
\end{proof}

Given a representation \(\Rep\) of \(\LieG\) on \(\Vecs\), we can lift it to the representation \(\Rep\otimes \Id_{\mathcal{L}(\IrredRepS)}\) on \(\Vecs \otimes \mathcal{L}(\IrredRepS)\).
Clearly, 
\[
    \Rep(x)(\Trace[A])
    = \Trace[(\Rep(x)\otimes \Id_{\mathcal{L}(\IrredRepS)}(A)]
    , \qquad A\in E\otimes \mathcal{L}(\IrredRepS)
    .
\]

\begin{proof}[{Proof of \cref{thm:factorization-vectors}}]
    By \cref{thm:beurling-to-roumieu}, it is sufficient to consider the Roumieu case.

    Let \(\tilde{B} \subset \BRDiffV\) be bounded.
    By \cref{thm:convolution-to-composition}, \cref{thm:characterization-coefficients-bornological}, and \cref{thm:factorization-functions-roumieu}, there exist \(g \in \BRDiff(\LieG)\) and constants \(C_{\IrredRep} > 0\), \(\IrredRep \in \DLieG\), such that
    \begin{enumerate}
        \item \(\tilde{B}' = \{ f_v \mid v \in \tilde{B}\}\) is contained and bounded in \(\BRDiff(\LieG;\Vecs)\), where the function \(f_v\) is defined via \(\FT {f_v}(\IrredRep) = C_\IrredRep \FT{\Orbit{v}}(\IrredRep)\), \(\IrredRep \in \DLieG\).
        \item \(\Orbit{v} = g \ast f_v\) for all \(v \in \tilde{B}\).
    \end{enumerate}
    Set \(\tilde{v} = f_v(e)\) for \(v \in \tilde{B}\).
    It suffices to show that \(f _v= \gamma_{\tilde{v}}\), as this implies that \(\{\tilde{v} \mid v \in \tilde{B}\}\) is contained and bounded in \(\BRDiffV\)
    and 
    \[
        v 
        = \Orbit{v}(e) 
        = g \ast \gamma_{\tilde{v}}(e) 
        = \Action(\check{g})(\tilde{v})
        , \qquad v \in \tilde{B}
        .
    \]
    Using the Fourier inversion formula \eqref{equ:fourier-inversion}, we infer that
    \[
        f_v(x) 
        = \sum_{\IrredRep \in \DLieG} \IrredRepD \Trace [ \IrredRep^{\ast}(x)\circ \FT {f_v}(\IrredRep)] 
        = \sum_{\IrredRep \in \DLieG} \IrredRepD C_{\IrredRep} \Trace [ \IrredRep^{\ast}(x)\circ \FT{\Orbit{v}} (\IrredRep) ]
        , \qquad x \in G
        .
    \]
    Setting \(x = \Origin\), we get
    \[
        \tilde{v}
        = \sum_{\IrredRep \in \DLieG} \IrredRepD C_{\IrredRep} \Trace [\FT {\Orbit{v}} (\IrredRep)] 
        .
    \]
    Noticing that
    \begin{align*}
        (\Rep(x)\otimes \Id_{\mathcal{L}(\IrredRepS)}) (\FT {\Orbit{v}} (\IrredRep))
         &= \int _{\LieG} (\Rep(x)\otimes \Id_{\mathcal{L}(\IrredRepS)})(\Orbit{v}(y) \otimes \IrredRep (y)) \dl{y}
         = \int _{\LieG} \Orbit{v}(y) \otimes \IrredRep (x^{-1}y) \dl{y} \\
         &= \IrredRep^{\ast}(x)\circ \FT{\Orbit{v}} (\IrredRep)
         ,
    \end{align*}
    we obtain
    \begin{align*}
        \Orbit{\tilde{v}}(x)
         &= \Rep(x)\left(\sum_{\IrredRep \in \DLieG} \IrredRepD C_{\IrredRep} \Trace [\FT {\Orbit{v}} (\IrredRep)] \right ) 
         = \sum_{\IrredRep \in \DLieG} \IrredRepD C_{\IrredRep} \Trace [(\Rep(x)\otimes \Id_{\mathcal{L}(\IrredRepS)}) (\FT {\Orbit{v}} (\IrredRep))] \\
         &= \sum_{\IrredRep \in \DLieG} \IrredRepD C_{\IrredRep} \Trace [ \IrredRep^{\ast}(x)\circ \FT{\Orbit{v}} (\IrredRep)] 
         = f_{v}(x)
         , \qquad x\in G
         .
         \qedhere
    \end{align*}
\end{proof}

\section{Factorization of non-quasianalytic vectors}\label{sec: factorization non-quasianalytic vectors}

Recall that \(\Weight\) is non-quasianalytic if \( \displaystyle \int _{0}^{\infty} t^{-2}\Weight(t)\dl{t} < \infty\). 
This is equivalent to the existence of non-zero compactly supported elements in \(\UDiff(M)\), which in turn yields the possibility of using partitions of unity from this class \cite{BMT}.
We have the following improvements to \cref{thm:factorization-vectors} for non-quasianalytic vectors and \cref{thm:factorization-vectors-smooth} (part \((ii)\) recovers a result of Dixmier and Malliavin for Fréchet representations \cite[Th\'eor\`eme 4.11]{D-M}).

\begin{theorem}
    \label{thm:factorization-vectors-non-quasianalytic}
    Let \(\Rep\) be a  representation of \(\LieG\) acting on \(\Vecs\) and let \(V\) be a neighborhood of the identity \(\Origin\) in \(\LieG\).
    \begin{enumerate}
        \item Suppose that \(\Weight\) is non-quasianalytic.
            Given a bounded subset \(B\) of \(\BUDiffV\), there are \(f\in \UDiff(\LieG)\) with \(\Supp f\subseteq V\) and a bounded subset \(B'\) of \(\BUDiffV\) such that \(B=\Action(f)(B')\).
        \item Given a bounded subset \(B\) of \(\SmoothV\), there are \(f\in C^{\infty}(\LieG)\) with \(\Supp f\subseteq V\) and a bounded subset \(B'\) of \(\SmoothV\) such that \(B=\Action(f)(B')\).
    \end{enumerate}
\end{theorem}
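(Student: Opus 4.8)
The plan is to reduce part (ii) to part (i), and to prove part (i) by localizing the Fourier division that underlies \cref{thm:factorization-vectors}; the new point is that the factorizing function need no longer be central, only to have an invertible group Fourier transform. For part (ii), let \(B\subseteq\SmoothV\) be bounded. By \cref{thm:smooth-to-roumieu} there is a weight function \(\Weight*\) with \(B\) contained and bounded in \(\BRDiffV[\Weight*]\); inspecting that lemma, \(\Weight*\) grows only slightly faster than \(\log(1+t)\) and may be arranged to be non-quasianalytic (e.g.\ so that \(\Weight*(t)=o(t/(\log t)^{2})\), which is still compatible with \(\log(1+t)=o(\Weight*(t))\)). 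Applying part (i) for this non-quasianalytic \(\Weight*\) produces \(f\in\RDiff[\Weight*](\LieG)\subseteq\Smooth(\LieG)\) with \(\Supp f\subseteq V\) and a bounded set \(B'\subseteq\BRDiffV[\Weight*]\subseteq\SmoothV\) with \(B=\Action(f)(B')\), as required.

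For part (i) I treat the Roumieu case (the Beurling case is analogous and in fact simpler, since bounded sets then decay to every order). Let \(B\subseteq\BRDiffV\) be bounded. By \cref{thm:characterization-coefficients-bornological} and \cref{thm:roumieu-inductive} there are \(B_{0}\in\Bnds\), \(h_{0}>0\) and \(C>0\) with
\[
  \sup_{v\in B}\HSnorm{\FT{\check{\gamma}_{v}}(\IrredRep)}^{B_{0}}\le C\,e^{-\frac{1}{h_{0}}\Weight(\sqrt{\IrredRepE})},\qquad \IrredRep\in\DLieG.
\]
Suppose for the moment that, using non-quasianalyticity, we can manufacture \(f\in\RDiff(\LieG)\) with \(\Supp f\subseteq V\) whose Fourier transform \(\FT f(\IrredRep)\) is invertible on \(\IrredRepS\) for every \(\IrredRep\) and satisfies \(\norm{\FT f(\IrredRep)^{-1}}\le C_{1}e^{\frac{1}{h_{1}}\Weight(\sqrt{\IrredRepE})}\) for some \(h_{1}>h_{0}\). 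For \(v\in B\) put \(T^{v}_{\IrredRep}=\FT f(\IrredRep)^{-1}\circ\FT{\check{\gamma}_{v}}(\IrredRep)\); then
\[
  \HSnorm{T^{v}_{\IrredRep}}^{B_{0}}\le CC_{1}\,e^{-\left(\frac{1}{h_{0}}-\frac{1}{h_{1}}\right)\Weight(\sqrt{\IrredRepE})},
\]
so \((T^{v}_{\IrredRep})_{\IrredRep}\) lies, uniformly for \(v\in B\), in a bounded subset of \(\SDiffC{h}(\DLieG;\Vecs_{B_{0}})\) for a suitable \(h\), and by \cref{thm:characterization-coefficients-bornological} the family \(F_{v}=\FTO^{-1}((T^{v}_{\IrredRep})_{\IrredRep})\) is bornologically ultradifferentiable, with \(\{F_{v}\}\) bounded.

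The decisive algebraic observation is that each \(F_{v}\) is again an orbit. A computation as in the proof of \cref{thm:factorization-vectors} shows that the orbit Fourier coefficients \(\FT{\check{\gamma}_{v}}(\IrredRep)\) are characterized by an intertwining relation in which \(\Rep(y)\otimes\Id\) corresponds to \emph{right} multiplication by \(\IrredRep(y)\) on the \(\mathcal{L}(\IrredRepS)\)-factor. Since \(\FT f(\IrredRep)^{-1}\) enters by \emph{left} multiplication on that factor, it commutes both with \(\Rep(y)\otimes\Id\) and with right multiplication by \(\IrredRep(y)\); hence \((T^{v}_{\IrredRep})_{\IrredRep}\) satisfies the very same relation, so \(F_{v}=\check{\gamma}_{w_{v}}\) with \(w_{v}:=F_{v}(\Origin)\in\BRDiffV\) and \(\{w_{v}\mid v\in B\}\) bounded. (It is exactly this left/right splitting that removes the centrality of the factor needed in \cref{thm:factorization-vectors}.) Finally \cref{thm:convolution-to-composition} gives \(\FT{f\ast\check{\gamma}_{w_{v}}}=\FT f\circ\FT{\check{\gamma}_{w_{v}}}=\FT{\check{\gamma}_{v}}\), whence \(f\ast\check{\gamma}_{w_{v}}=\check{\gamma}_{v}\); by the identity \(\check{\gamma}_{\Action(f)w_{v}}=f\ast\check{\gamma}_{w_{v}}\) from the proof of \cref{thm:smoothening} this means \(\Action(f)w_{v}=v\), so \(B'=\{w_{v}\mid v\in B\}\) is the required bounded set.

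The main obstacle is the construction of the localized factor \(f\). Already on a torus this asks for a compactly supported ultradifferentiable function with nowhere-vanishing Fourier transform and controlled reciprocal, which is the delicate ingredient of one-dimensional strong factorization. In general a natural route is to take \(f=\phi\ast\phi^{\ast}\), so that \(\FT f(\IrredRep)=\FT\phi(\IrredRep)\FT\phi(\IrredRep)^{\ast}\ge 0\), with \(\Supp\phi\subseteq V_{0}\) and \(V_{0}V_{0}^{-1}\subseteq V\); this reduces matters to producing \(\phi\in\RDiff(\LieG)\) whose blocks \(\FT\phi(\IrredRep)\) are all of full rank, with smallest singular value at least \(e^{-\frac{1}{2h_{1}}\Weight(\sqrt{\IrredRepE})}\). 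Securing such a \(\phi\) — reconciling compact support with uniform invertibility and conditioning of every Fourier block — is precisely where non-quasianalyticity is indispensable and constitutes the technical crux of the argument.
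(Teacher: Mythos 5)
Your reduction is sound and, in outline, is the same as the paper's: part (ii) follows from part (i) via \cref{thm:smooth-to-roumieu} (your extra observation that the weight \(\Weight*\) produced there can be arranged non-quasianalytic, e.g.\ \(\Weight*(t)=o(t/(\log t)^2)\), is correct and needed), and part (i) is reduced, through \cref{thm:characterization-coefficients-bornological} and \cref{thm:convolution-to-composition}, to producing a factor supported in \(V\) with invertible, well-conditioned Fourier blocks; your intertwining argument showing that \((\FT f(\IrredRep)^{-1}\circ\FT{\check{\gamma}_{v}}(\IrredRep))_{\IrredRep}\) is again the coefficient family of an orbit is correct (and is a clean way to handle the non-commutativity of the blocks). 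The problem is the step you explicitly defer: the existence of \(f\in\RDiff(\LieG)\) with \(\Supp f\subseteq V\), all \(\FT f(\IrredRep)\) invertible, and \(\norm{\FT f(\IrredRep)^{-1}}\leq C_{1}e^{\frac{1}{h_{1}}\Weight(\sqrt{\IrredRepE})}\). This is not a separable technicality; it is the whole new content of the theorem relative to \cref{thm:factorization-vectors} (it is exactly \cref{thm:factorization-functions-roumieu-supportversion} in the paper), so what you have is an incomplete reduction, not a proof. Moreover, the route you sketch for it --- a single \(\phi\) supported near \(\Origin\) whose every block has smallest singular value \(\geq e^{-\frac{1}{2h_{1}}\Weight(\sqrt{\IrredRepE})}\) --- demands far more than necessary and is, as you yourself note, of Ehrenpreis-type difficulty; this is not where non-quasianalyticity is used.

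The gap has a short solution, the Dixmier--Malliavin decomposition trick: do not look for one well-conditioned compactly supported function, but cut up a function whose Fourier transform is already prescribed. Fix \(h'>h_{0}\) and set \(g_{0}=\FTO^{-1}\bigl((e^{-\frac{1}{2h'}\Weight(\sqrt{\IrredRepE})}\Id_{\IrredRepS})_{\IrredRep}\bigr)\in\RDiff(\LieG)\). Since \(\Weight\) is non-quasianalytic, ultradifferentiable partitions of unity of class \(\{\Weight\}\) exist, so \(g_{0}=\sum_{j=1}^{k}\psi_{j}\) with each \(\psi_{j}\in\RDiff(\LieG)\) supported in a translate of a small symmetric neighborhood \(W\) (which may be taken conjugation-invariant, as \(\LieG\) is compact) with \(W\cdot W\subseteq V\). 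Put \(f=\sum_{j=1}^{k}\psi_{j}^{\ast}\ast\psi_{j}\), where \(\psi_{j}^{\ast}(x)=\overline{\psi_{j}(x^{-1})}\). Then \(\Supp f\subseteq V\), each \(\FT f(\IrredRep)=\sum_{j}\FT{\psi_{j}}(\IrredRep)^{\ast}\FT{\psi_{j}}(\IrredRep)\geq 0\), and if \(v\) is a unit eigenvector for the smallest eigenvalue \(\mu_{\IrredRep}\), then
\[
    \mu_{\IrredRep}
    =\sum_{j=1}^{k}\norm{\FT{\psi_{j}}(\IrredRep)v}_{\IrredRepS}^{2}
    \geq\frac{1}{k}\norm*{\sum_{j=1}^{k}\FT{\psi_{j}}(\IrredRep)v}_{\IrredRepS}^{2}
    =\frac{1}{k}\,e^{-\frac{1}{h'}\Weight\left(\sqrt{\IrredRepE}\right)},
\]
since \(\sum_{j}\FT{\psi_{j}}(\IrredRep)=\FT{g_{0}}(\IrredRep)\) is the prescribed scalar multiple of the identity. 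This yields exactly the inverse estimate you postulated, with any \(h_{1}>h'\), after absorbing the dimension factor \(\IrredRepD^{1/2}\) via condition \((\gamma)\) and \eqref{equ:summability-eigenvalues}. The key point you missed is that invertibility only needs to be transferred, by the elementary inequality \(\sum_{j}\norm{a_{j}}^{2}\geq\frac{1}{k}\norm{\sum_{j}a_{j}}^{2}\), from the known sum \(\sum_{j}\FT{\psi_{j}}\) to the localized \(\sum_{j}\FT{\psi_{j}}^{\ast}\FT{\psi_{j}}\); non-quasianalyticity enters only through the existence of the partition of unity.
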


Part \((ii)\) of \cref{thm:factorization-vectors-non-quasianalytic} follows from \((i)\) by employing \cref{thm:smooth-to-roumieu}, while part \((i)\) can be established in the same way as in \cref{sec:main-proof} if we replace \cref{thm:factorization-functions-roumieu} by the following one: 

\begin{lemma}
    \label{thm:factorization-functions-roumieu-supportversion}
    Let \(V\) be a neighborhood of the identity \(\Origin\) in \(\LieG\) and suppose that \(\Weight\) is non-quasianalytic. 
    For every bounded set \(\tilde{B} \subset \BRDiffC(\DLieG;\Vecs)\) there is \(S=(S_{\IrredRep})_{\IrredRep} \in \RDiffC(\DLieG)\)
    such that
    \begin{enumerate}
        \item \(\Supp(\FTO^{-1}(S))\subset V\);
        \item each \(S_{\IrredRep}\) is invertible;  
        \item \(\{(S^{-1}_{\IrredRep}\circ T_{\IrredRep})_{\IrredRep} \mid (T_{\IrredRep})_{\IrredRep} \in \tilde{B}\}\) is contained and bounded in \(\BRDiffC(\DLieG;\Vecs)\).
    \end{enumerate}
\end{lemma}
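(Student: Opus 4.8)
The plan is to reduce the lemma to the construction of a single scalar divisor and then transfer everything through the Fourier isomorphism. Concretely, it suffices to produce a function $g\in\RDiff(\LieG)$ with $\Supp g\subseteq V$ whose group Fourier transform $S_\IrredRep:=\FT g(\IrredRep)$ is invertible for every $\IrredRep\in\DLieG$ and obeys a uniform lower bound of the form
\[
    \norm{S_{\IrredRep}^{-1}}_{\mathrm{op}}\le C\,e^{\frac{1}{h'}\Weight\left(\sqrt{\IrredRepE}\right)},\qquad \IrredRep\in\DLieG,
\]
for some $C,h'>0$. Indeed, setting $S=(S_\IrredRep)_\IrredRep$, property $(ii)$ is immediate, and since $\FTO^{-1}(S)=g\in\RDiff(\LieG)$, the case $\Vecs=\C$ of \cref{thm:characterization-coefficients-ultradifferentiable} gives $S\in\RDiffC(\DLieG)$ together with property $(i)$. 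For $(iii)$ I would argue exactly as in the proof of \cref{thm:factorization-functions-roumieu}: given a bounded $\tilde B\subset\BRDiffC(\DLieG;\Vecs)$, \cref{thm:roumieu-inductive} furnishes $h>0$ and $B\in\Bnds$ with $M:=\sup_{T\in\tilde B}\sup_{\IrredRep}\HSnorm{T_\IrredRep}^{B}e^{\frac1h\Weight(\sqrt{\IrredRepE})}<\infty$; choosing the divisor with $h'>h$ and using $\HSnorm{S_\IrredRep^{-1}\circ T_\IrredRep}^{B}\le\norm{S_{\IrredRep}^{-1}}_{\mathrm{op}}\HSnorm{T_\IrredRep}^{B}$ yields, with $\tfrac1{h_1}=\tfrac1h-\tfrac1{h'}>0$, the estimate $\sup_{T\in\tilde B}\sup_{\IrredRep}\HSnorm{S_\IrredRep^{-1}\circ T_\IrredRep}^{B}e^{\frac1{h_1}\Weight(\sqrt{\IrredRepE})}\le CM$. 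Thus $\{(S_\IrredRep^{-1}\circ T_\IrredRep)_\IrredRep\}$ is contained and bounded in $\SDiffC{h_1}(\DLieG;\Vecs_{B})$ and hence, again by \cref{thm:roumieu-inductive}, in $\BRDiffC(\DLieG;\Vecs)$. Note that $h'$ is chosen only after the bounded set $\tilde B$, which is harmless.

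For the construction of $g$ I would first use that the conjugation-invariant neighborhoods of $\Origin$ form a neighborhood basis (as $\LieG$ is compact), so we may assume $V$ is conjugation-invariant, and fix a smaller conjugation-invariant neighborhood $V_0$ with $V_0^{-1}V_0\subseteq V$. Non-quasianalyticity of $\Weight$ guarantees a supply of nonzero compactly supported functions in $\RDiff(\LieG)$ supported in $V_0$. Writing $\psi^{\ast}(x)=\overline{\psi(x^{-1})}$, I would take $g=\sum_{k}a_k\,\psi_k^{\ast}\ast\psi_k$ with $\psi_k\in\RDiff(\LieG)$ supported in $V_0$ and weights $a_k>0$; then $\Supp g\subseteq V_0^{-1}V_0\subseteq V$ and, by \cref{thm:convolution-to-composition} together with $\FT{\psi_k^{\ast}}(\IrredRep)=\FT{\psi_k}(\IrredRep)^{\ast}$, one has $S_\IrredRep=\sum_k a_k\,\FT{\psi_k}(\IrredRep)^{\ast}\circ\FT{\psi_k}(\IrredRep)\succeq 0$. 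Invertibility at a fixed $\IrredRep$ reduces to $\bigcap_k\ker\FT{\psi_k}(\IrredRep)=\{0\}$: a common null vector $v$ forces $\int_\LieG\psi(x)\,\IrredRep(x)v\dl{x}=0$ for all $\psi$ in the closed span of $\{\psi_k\}$, and choosing this span rich enough to contain approximate identities at a point $x_0\in V_0$ gives $\IrredRep(x_0)v=0$, impossible for $v\neq0$ since $\IrredRep(x_0)$ is unitary. Hence a countable family $\{\psi_k\}$ with dense span in $C(V_0)$ makes $S_\IrredRep$ positive definite, and in particular invertible, for every $\IrredRep\in\DLieG$.

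The remaining, and genuinely hard, point is the uniform quantitative lower bound $\lambda_{\min}(S_\IrredRep)\ge c\,e^{-\frac1{h'}\Weight(\sqrt{\IrredRepE})}$ over all $\IrredRep\in\DLieG$, which is precisely what makes $\norm{S_{\IrredRep}^{-1}}_{\mathrm{op}}$ controllable. This is an uncertainty-principle obstruction: compact support of $g$ is in tension with two-sided exponential control of $\FT g$, and a single profile generically has $\FT{\psi}(\IrredRep)$ with nontrivial kernel at arbitrarily large $\IrredRep$, so both the profiles $\psi_k$ and the decay of the weights $a_k$ must be calibrated to $\Weight$. My plan is to organize the $\psi_k$ by scale, assigning to each dyadic band $\IrredRepE\sim 2^{k}$ a localized profile $\psi_k$ supported in $V_0$ (for instance a cut-off heat kernel, legitimate here since real analytic functions lie in $\RDiff(\LieG)$ by $(\beta)$) tuned so that its contribution $a_k\norm{\FT{\psi_k}(\IrredRep)v}^2$ to $\langle S_\IrredRep v,v\rangle$ stays $\gtrsim e^{-\frac1{h'}\Weight(\sqrt{\IrredRepE})}\norm{v}^2$ on that band, while Weyl's law \eqref{equ:summability-eigenvalues} and summability of the $a_k$ keep $g\in\RDiff(\LieG)$ and prevent the off-band terms from spoiling the bound. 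Matching the lower-bound rate $h'$ to the class $\Weight$ exactly at the non-quasianalyticity threshold is where the bulk of the technical work lies; once it is in place, properties $(i)$–$(iii)$ follow from the reduction above.
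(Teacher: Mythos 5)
Your reduction of the lemma to the construction of a single scalar divisor $g\in\RDiff(\LieG)$ with $\Supp g\subseteq V$ and $\norm{\FT{g}(\IrredRep)^{-1}}_{\mathrm{op}}\le C\,e^{\frac{1}{h'}\Weight(\sqrt{\IrredRepE})}$ is correct, your treatment of part $(iii)$ coincides with the paper's, and the Gram-type ansatz $\sum_j\psi_j^{\ast}\ast\psi_j$ is the right one. But there is a genuine gap exactly where you say the bulk of the work lies: the uniform lower bound $\lambda_{\min}(\FT{g}(\IrredRep))\gtrsim e^{-\frac{1}{h'}\Weight(\sqrt{\IrredRepE})}$ is never established; only a plan is sketched (dyadic bands, cut-off heat kernels, calibrated weights $a_k$), and that plan runs head-on into the very uncertainty-principle difficulty you identify: truncating a heat kernel destroys all exact knowledge of its Fourier coefficients, and nothing forces the band-$k$ profile to have $\norm{\FT{\psi_k}(\IrredRep)v}$ bounded below uniformly over all unit vectors $v$ and all $\IrredRep$ in the band. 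Your qualitative invertibility argument (common kernels, dense spans, approximate identities) yields no rate at all, so it delivers $(ii)$ but not $(ii)$ and $(iii)$ together.

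The missing idea --- the paper's adaptation of Dixmier--Malliavin \cite[Lemme 4.5]{D-M} --- reverses the construction so that the hard estimate becomes trivial. Instead of building $g$ from profiles with prescribed supports and then trying to bound its Fourier transform from below, start from the function whose Fourier coefficients are \emph{exactly} prescribed, $F=\FTO^{-1}\bigl((e^{-\frac{1}{2h'}\Weight(\sqrt{\IrredRepE})}\Id_{\IrredRepS})_{\IrredRep}\bigr)\in\RDiff(\LieG)$ (of course not supported in $V$), and then localize it: by compactness of $\LieG$ and non-quasianalyticity of $\Weight$ --- whose real role here is to supply a finite ultradifferentiable partition of unity subordinate to translates of a symmetric neighborhood $W$ with $W\cdot W\subseteq V$, not merely ``a supply of bump functions'' --- one writes $F=\sum_{j=1}^{k}\psi_j$ with each $\psi_j\in\RDiff(\LieG)$ supported in a translate of $W$. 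Taking $g=\sum_{j=1}^{k}\psi_j^{\ast}\ast\psi_j$, each individual $\FT{\psi_j}(\IrredRep)$ may be arbitrarily degenerate, but the summation identity $\sum_{j=1}^{k}\FT{\psi_j}(\IrredRep)=e^{-\frac{1}{2h'}\Weight(\sqrt{\IrredRepE})}\Id_{\IrredRepS}$ together with the Cauchy--Schwarz inequality gives, for every unit vector $v\in\IrredRepS$,
\[
    (\FT{g}(\IrredRep)v,v)_{\IrredRepS}
    =\sum_{j=1}^{k}\norm{\FT{\psi_j}(\IrredRep)v}_{\IrredRepS}^{2}
    \ge\frac{1}{k}\norm*{\sum_{j=1}^{k}\FT{\psi_j}(\IrredRep)v}_{\IrredRepS}^{2}
    =\frac{1}{k}\,e^{-\frac{1}{h'}\Weight\left(\sqrt{\IrredRepE}\right)},
\]
i.e.\ the desired lower bound with a constant depending only on the finite, $\IrredRep$-independent number $k$ of pieces. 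This exact identity is precisely what your scale-by-scale scheme lacks and cannot easily reproduce; without it, the calibration you defer to ``technical work'' is not a routine completion but the entire content of the lemma.
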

\begin{proof} We adapt an idea of Dixmier and Malliavin \cite[Lemme 4.5]{D-M} used by them for the one-dimensional torus. 
    Let \(W\) be an inverse symmetric neighborhood of \(e\) such that \(W\cdot W\subseteq V\). 
    Choose \(B \in \Bnds\) and \(h\) exactly as in the proof of \cref{thm:factorization-functions-roumieu} and fix \(h'>h\).
    Note that \(\FTO^{-1}( ( e^{-\frac{1}{2h'}\Weight\left(\sqrt{\IrredRepE}\right)}\Id_{\IrredRepS})_{\IrredRep})\in  \RDiff(\LieG)\). 
    Using the compactness of \(\LieG\) and the non-quasianalyticity of \(\Weight\), we can write (where \(k\) just depends on \(V\)) 
    \begin{equation}\label{eq: Fourier inverse function for D-M localization trick}
        \FTO^{-1}( ( e^{-\frac{1}{2h'}\Weight\left(\sqrt{\IrredRepE}\right)}\Id_{\IrredRepS})_{\IrredRep}) 
        = \sum_{j=1}^{k}\psi_{j}
        ,
    \end{equation}
    where each \(\psi_{j}\) \(\in \RDiff(\LieG)\) is supported in a right-translation of \(W\).
    Consider
    \[
        f 
        = \sum_{j=1}^{k}\psi_{j}^{\ast} \ast \psi_{j}
        ,
    \]
    \(\psi_{j}^{\ast}(x)=\overline{\psi_{j}(x^{-1})}\), which is supported in \(V\).
    We have 
    \[ 
        S
        = (S_{\IrredRep})_{\IrredRep} 
        = (\FT{f}({\IrredRep}))_{\IrredRep} \in\RDiffC(\DLieG)
        ,
    \]
    so \((i)\) is fulfilled.
    Moreover, it is also clear that each \(\FT{f}({\IrredRep})\) is positive semi-definite. 
    Let \(\mu_{\IrredRep}\) be the smallest eigenvalue of \(\FT{f}({\IrredRep})\). 
    We need a lower bound for \(\mu_{\IrredRep}\). 
    If \(v\) is a non-zero eigenvector associated with the eigenvalue \(\mu_{\IrredRep}\), we have
    \[
        \mu_{\IrredRep}
        = \frac{(\FT{f}({\IrredRep})v, v)_{\IrredRepS}}{\norm{v}_{\IrredRepS}^2}
        = \frac{1}{\norm{v}_{\IrredRepS}^2} \sum_{j=1}^{k} \norm{\FT{\psi}_{j}(\IrredRep)v}_{\IrredRepS}^{2}
        \geq \frac{1}{k\norm{v}_{\IrredRepS}^2}\norm*{ \sum_{j=1}^{k}\FT{\psi}_{j}(\IrredRep)v }_{\IrredRepS}^{2}=\frac{ e^{-\frac{1}{h'}\Weight\left(\sqrt{\IrredRepE}\right)}}{k}
        \: ,
    \]
    where we have used \eqref{eq: Fourier inverse function for D-M localization trick}. 
    This implies that \(S_\IrredRep\) is positive definite and thus, in particular, invertible. 
    Hence, \(S\) satisfies \((ii)\). 
    Fix \(h<h''<h'\). 
    Then,
    \begin{equation}\label{eq: HS of inverse S}
        \HSnorm{S^{-1}_{\IrredRep}}
        \leq \frac{\IrredRepD^{1/2}}{\mu_{\IrredRep}} 
        \leq k \IrredRepD^{1/2}e^{\frac{1}{h'}\Weight\left(\sqrt{\IrredRepE}\right)}
        \leq Ce^{\frac{1}{h''}\Weight\left(\sqrt{\IrredRepE}\right)}
        , 
    \end{equation}
    for some \(C>0\), as one infers from condition \((\gamma)\) and \eqref{equ:summability-eigenvalues}.
    Part \((iii)\) now follows from \eqref{eq: HS of inverse S} and the Cauchy-Schwarz inequality for Hilbert-Schmidt norms, which allow us to deduce that \(\{(S^{-1}_{\IrredRep}\circ T_{\IrredRep})_{\IrredRep} \mid (T_{\IrredRep})_{\IrredRep} \in \tilde{B}\}\) is contained and bounded in \(\SDiffC{\frac{h''h}{h''-h}}(\DLieG;\Vecs_{B})\).
\end{proof}

\end{document}